\numberwithin{equation}{section}
\newtheorem{theorem}{Theorem}[section]
\newtheorem{lemma}[theorem]{Lemma}
\newtheorem{proposition}[theorem]{Proposition}
\newtheorem{corollary}[theorem]{Corollary}
\newtheorem{remark}[theorem]{Remark}
\newtheorem{definition}[theorem]{Definition}
\newtheorem{assumption}[theorem]{Assumption}
\newcommand{\capT}{{\bar{T}}}
\renewcommand{\tilde}{\widetilde}          
\DeclareMathSymbol{\leqslant}{\mathalpha}{AMSa}{"36} 
\DeclareMathSymbol{\geqslant}{\mathalpha}{AMSa}{"3E} 
\DeclareMathSymbol{\eset}{\mathalpha}{AMSb}{"3F}     
\newcommand{\dd}{\text{\rm d}}             
\newcommand{\R}{\mathbb{R}}
\newcommand{\N}{\mathbb{N}}
\renewcommand{\P}{\ensuremath{\mathbb P}}
\newcommand{\E}{\ensuremath{\mathbb E}}
\newcommand{\ind}{{\sf 1}}
\renewcommand{\epsilon}{\varepsilon} 
\renewcommand{\theta}{\vartheta} 
\renewcommand{\rho}{\varrho} 
\newenvironment{myenumerate}{%
\renewcommand{\theenumi}{\arabic{enumi}}%
\renewcommand{\labelenumi}{{\rm(\theenumi)}}%
\begin{list}{\labelenumi}
	{%
	\setlength{\itemsep}{0.4em}%
	\setlength{\topsep}{0.5em}%
	\setlength\leftmargin{2.45em}%
	\setlength\labelwidth{2.05em}%
	\setlength{\labelsep}{0.4em}%
	\usecounter{enumi}%
	}%
	}%
{\end{list}
}
\newenvironment{myitemize}{%
\begin{list}{$\bullet$}%
 	{%
	\setlength{\itemsep}{0.4em}%
	\setlength{\topsep}{0.5em}%
	\setlength\leftmargin{2.45em}%
	\setlength\labelwidth{2.05em}%
	\setlength{\labelsep}{0.4em}%
	}%
	}%
{\end{list}}
\renewenvironment{itemize}{
\begin{myitemize}}%
{\end{myitemize}}
\title[Wasserstein distance between the marginals of two Markov processes]{Evolution of the Wasserstein distance between the marginals of two Markov processes}
\author{Aur\'elien Alfonsi, Jacopo Corbetta and Benjamin Jourdain}
\thanks{Universit\'e Paris-Est, Cermics (ENPC), INRIA, F-77455 Marne-la-Vall\'ee, France, e-mails : aurelien.alfonsi@enpc.fr, jacopo.corbetta@enpc.fr, benjamin.jourdain@enpc.fr. This research benefited
     from the support of the ``Chaire Risques Financiers'', Fondation du
     Risque, the French National Research Agency under the program
  ANR-12-BS01-0019 (STAB)}
\date{\today}
\newcommand{\opsi}{\widetilde \psi}
\newcommand{\oX}{\widetilde X}
\newcommand{\oL}{\widetilde{L}}
\newcommand{\olambda}{\widetilde{\lambda}}
\newcommand{\ok}{\widetilde{k}}
\newcommand{\oP}{\widetilde{P}}
\begin{document}
\begin{abstract}
  In this paper, we are interested in the time derivative of the Wasserstein distance between the marginals of two Markov processes. As recalled in the introduction, the Kantorovich duality leads to a natural candidate for this derivative. Up to the sign, it is the sum of the integrals with respect to each of the two marginals of the corresponding generator applied to the corresponding Kantorovich potential. For pure jump processes with bounded intensity of jumps, we prove that the evolution of the Wasserstein distance is actually given by this candidate. In dimension one, we show  that this remains true for Piecewise Deterministic Markov Processes. We apply the formula to estimate the exponential decrease rate of the Wasserstein distance between the marginals of two birth and death processes with the same generator in terms of the Wasserstein curvature.
 \end{abstract}
 \maketitle

\noindent {\small {\bf Keywords :} Wasserstein Distance, Optimal Transport, Pure Jump Markov Processes, Piecewise Deterministic Markov Processes, Birth  and Death Processes. }\\
\noindent {\small {\bf AMS MSC 2010 :} 60J75, 49K99}

\section{Introduction}

The goal of this paper is to compute the time derivative of the $\rho$-Wasserstein distance
between the marginals of two Markov processes $\{X_t\}_{t\ge 0}$ and $\{\oX_t\}_{t\ge 0}$. For
$\rho\geq 1$, the $\rho$-Wasserstein distance between two probability measures $P, \tilde{P}$ on $\R^d$ is defined as
\begin{equation}
 W_{\rho}(P,\tilde{P})=\left(\inf_{\pi\in \Pi(P,\tilde{P})}\int_{\R^d\times\R^d}|x-y|^{\rho}\pi(\dd x,\dd y)\right)^{1/\rho}\,
\end{equation}
where $\Pi(P,\tilde{P})$ is the set of probability measures on $\R^d\times\R^d$ with respective marginals $P$ and $\tilde{P}$. It is well known that there exists $\pi\in\Pi(P,\tilde{P})$ such that $W^\rho_{\rho}(P,\tilde{P})=\int_{\R^d\times\R^d}|x-y|^{\rho}\pi(\dd x,\dd y)$ (see for instance Theorem 3.3.11 of \cite{cf:RR}).


It is possible to prove (see for instance Theorem 5.10 of \cite{cf:Villani}) that: 
\begin{equation}
W_{\rho}^\rho(P,\tilde{P})= \sup\left\{-\int_{\R^d}\phi(x)P(\dd x)-\int_{\R^d}\tilde{\phi}(y)\tilde{P}(\dd y) \right\}
\end{equation}
where the supremum runs along all pairs $(\phi,\tilde{\phi})\in L^1(P)\times L^1(\oP)$ such that $\forall (x,y)\in\R^d\times\R^d$, $-\phi(x)-\tilde{\phi}(y)\le |x-y|^\rho$. Moreover, according to Theorem 5.10 of \cite{cf:Villani}, if $\int_{\R^d}|x|^\rho P(\dd x)+\int_{\R^d}|y|^\rho\tilde{P}(\dd y)<+\infty$,  there exists a couple $(\psi,\opsi)\in L^1(P)\times L^1(\oP)$
of $\rho$-convex functions 
such that $W_{\rho}^\rho(P,\tilde{P})= -\int_{\R^d}\psi(x)P(\dd x)-\int_{\R^d}\tilde{\psi}(y)\tilde{P}(\dd y) $ and one is the $\rho$-transform of the other, i.e.
\begin{equation}
   \psi(x)=-\inf_{y\in\R^d}\left\{
 |x-y|^\rho+\opsi(y) \right\} \quad \text{and}\quad 
 \opsi(y)=-\inf_{x\in\R^d}\left\{
 |x-y|^\rho+\psi(x) \right\}\,.\label{dual}
\end{equation}
These functions
$\psi$ and $\opsi$ are called Kantorovich potentials. For an optimal coupling $\pi$ (which is unique when $\rho>1$ according to Theorem 6.2.4 of \cite{cf:AGS}), since $\int_{\R^d\times\R^d}-(\psi(x)+\opsi(y))\pi(\dd x,\dd y)=W^\rho_{\rho}(P,\tilde{P})=\int_{\R^d\times\R^d}|x-y|^{\rho}\pi(\dd x,\dd y)$ and $-(\psi(x)+\opsi(y))\le |x-y|^\rho$, we necessarily  have
\begin{equation}
   \pi(\dd x,\dd y)\mbox{ a.e.}, -\psi(x)-\opsi(y)= |x-y|^\rho.\label{optipi}
\end{equation}
In dimension $d=1$, an optimal coupling~$\pi$ is the probability law of $(F^{-1}(U),\tilde{F}^{-1}(U))$ where $F(x)=P((-\infty,x])$ and $\tilde{F}=\oP((-\infty,x])$ are respectively the cumulative distribution functions of $P$ and $\tilde{P}$,   $F^{-1}(u)=\inf \{ x \in \R, F(x)>u \}$ and $\tilde{F}^{-1}(u)=\inf \{ x \in \R, \tilde{F}(x)>u \}$ their right-continuous pseudo-inverse,  and $U$ is a uniform random variable on $[0,1]$ (see Theorem 3.1.2 of \cite{cf:RR}). When $F$ is continuous and $\rho>1$, we explicit a pair of Kantorovich potentials. Let us define \begin{equation}\label{potential_1d}\psi(x)=\rho\int_0^x|T(x')-x'|^{\rho-2}(T(x')-x')\dd x', \ T(x)=\tilde{F}^{-1}(F(x)).
\end{equation}
and $\tilde{\psi}(y)=-\inf \{ |x-y|^\rho+\psi(x)\}$. 
 Since $r \mapsto \rho|r|^{\rho-2}r$ and $x\mapsto T(x)$ are  nondecreasing, we have for $x,z\in \R$,
\begin{align*}
 \psi(x)-\psi(z)&=\rho\int_z^x|T(x')-x'|^{\rho-2}(T(x')-x')\dd x' \\
&\le \rho\int_z^x|T(x)-x'|^{\rho-2}(T(x)-x')\dd x'=|T(x)-z|^\rho-|T(x)-x|^\rho.
\end{align*}
Then, we have \begin{equation}\label{optim1d}
   \psi(x)+|T(x)-x|^\rho=\inf_{z\in \R} \{\psi(z)+|T(x)-z|^\rho \}=-\tilde{\psi}(T(x)). 
\end{equation}
Thus, $\pi(\dd x,\dd y)=P(\dd x) \delta_{T(x)}(\dd y)$ is the optimal coupling and $T$ is called the optimal transport map.

Let us now consider two $\R^d$-valued Markov processes $\{X_t\}_{t\geq 0}$ and $\{\oX_t\}_{t\geq 0}$ with respective
infinitesimal generators $L$ and $\oL$ such that $\forall t\ge 0$, $\E[|X_t|^\rho+|\oX_t|^\rho]<\infty$. For $t\ge 0$, the $\rho-$Wasserstein distance between the law $P_t$ of $X_t$ and the law $\tilde{P}_t$ of $\oX_t$ is given by
\[
 W_{\rho}^\rho(P_t,\oP_t)=-\int_{\R^d}\psi_t(x) P_t(\dd x) -\int_{\R^d}\opsi_t(y) \oP_t(\dd y)\,,
\]
where $\psi_t,\opsi_t$ are Kantorovich potentials associated with $(P_t,\oP_t)$. 

The above duality gives a very natural candidate for the time derivative of $t\mapsto W_\rho^\rho(P_t,\oP_t)$. Indeed, for $t>0$, let us assume that $(\psi_t,\opsi_t)\in L^1(P_s)\times L^1(\oP_s)$ for $s\in(t-\varepsilon,t+\varepsilon)$ for some $\varepsilon>0$. Then, we have
\begin{equation*}
\begin{split}
 W_\rho^\rho(P_{s},\oP_{s}) &\geq -\int_{\R^d}\psi_t(x)\, P_{s}(\dd x)-\int_{\R^d}\opsi_t(x)\, \oP_{s}(\dd x).
 \end{split}
 \end{equation*}
For $h>0$, choosing $s=t+h$ then (when $t\ge h$) $s=t-h$ leads to
\begin{align*}
 \frac{1}{h}\left(W_\rho^{\rho}(P_{t+h},\oP_{t+h}) - W_\rho^{\rho}(P_t,\oP_t)\right) 
 \geq &-\frac 1h \int_{\R^d}\psi_t(x)(P_{t+h}(\dd x)-P_t(\dd x))\\&-\frac 1h \int_{\R^d}\opsi_t(x)(\oP_{t+h}(\dd x)-\oP_t(\dd x)),\\
\frac{1}{h}\left(W_\rho^{\rho}(P_{t},\oP_{t}) - W_\rho^{\rho}(P_{t-h},\oP_{t-h})\right) 
 \leq &-\frac 1h \int_{\R^d}\psi_t(x)(P_t(\dd x)-P_{t-h}(\dd x))\\&-\frac 1h \int_{\R^d}\opsi_t(x)(\oP_t(\dd x)-\oP_{t-h}(\dd x)),
\end{align*}
If $\psi_t$ and $\opsi_t$ are respectively in the domains of the generators $L$ and $\oL$, 
 one has
\begin{align*}
   \int_{\R^d}\psi_t(x)(P_t(\dd x)-P_s(\dd x))&=\int_s^t \int_{\R^d}L \psi_t(x)P_r(\dd x)\dd r\\
\int_{\R^d}\opsi_t(x)(\oP_t(\dd x)-\oP_s(\dd x))&=\int_s^t \int_{\R^d}\oL \opsi_t(x)\oP_r(\dd x)\dd r.\end{align*}
Plugging these equalities for $s=t+h$ and $s=t-h$ in the previous inequalities and letting $h\to 0$, leads, under continuity (resp. differentiability) at time $t$ of $r\mapsto \int_{\R^d}L \psi_t(x)P_r(\dd x)+\int_{\R^d}\oL \opsi_t(x)\oP_r(\dd x)$ (resp. $r\mapsto W_\rho^\rho(P_r,\oP_r)$), to
\begin{equation}
   \frac{d}{dt}W_\rho^\rho(P_t,\oP_t)= -\int_{\R^d}L \psi_t(x)P_t(\dd x)-\int_{\R^d}\oL \opsi_t(x)\oP_t(\dd x).\label{forderwas}
\end{equation}

In the present paper, we are interested in the slightly weaker integral formula : $\forall 0\le s\le t$,
\begin{equation}
   W_\rho^\rho(P_t,\oP_t)-W_\rho^\rho(P_s,\oP_s)=-\int_s^t\left(\int_{\R^d}L \psi_r(x)P_r(\dd x)+\int_{\R^d}\oL \opsi_r(x)\oP_r(\dd x)\right)\dd r.\label{evowass}
\end{equation}

We will prove that it holds when $\{X_t\}_{t\ge 0}$ and $\{\oX_t\}_{t\ge 0}$ are pure jump Markov processes with bounded intensity of jumps and such that $t\mapsto \E[|X_t|^{\rho(1+\varepsilon)}+|\oX_t|^{\rho(1+\varepsilon)}]$ is locally bounded for some $\varepsilon>0$. The interest of this result is reinforced by the fact that (like in the proof of the Hille-Yoshida theorem)  one can approximate any Markov process using a sequence of 
pure jump Markov processes with  increasing jump intensity. Using this approximation procedure, we check that \eqref{evowass} still holds for one-dimensional Piecewise Deterministic Markov Processes evolving according to an ordinary differential equation between jumps with finite intensity. Even if the case of deterministic processes evolving according to time-dependent ordinary differential equations is covered by~\cite{cf:AGS} (see in particular Theorem 8.4.7), it is not so easy to mix jumps and ODEs. That is why our derivation of (1.6) for PDMPs is restricted to the one-dimensional setting where we can take advantage of the explicit knowledge of the optimal transport map. For diffusion processes, the Euler-Maruyama discretization scheme provides another approximation procedure  considered in~\cite{cf:AJK}.
The interest of an exact formula for the time derivative goes beyond the issue of controlling the Wasserstein stability of a single Markov process, a topic that has been the object of intensive research in the recent years, especially in the diffusion case. Such a formula can also be used to analyze approximation procedures like the Euler-Maruyama discretization scheme studied in~\cite{cf:AJK14,cf:AJK}. Concerning the Wasserstein stability of Markov semi-groups, many authors have studied the Wasserstein contraction property, namely the existence of constants $\kappa,C>0$ such that 
\begin{equation}
   \forall P_0,\tilde{P}_0\mbox{ s.t. }\int_{\R^d}|x|^\rho (P_0+\tilde{P}_0)(\dd x)<\infty,\;\forall t\ge 0,\;W_{\rho}(P_t,\tilde{P}_t)\le Ce^{-\kappa t}W_{\rho}(P_0,\tilde{P}_0).\label{wasstab}
\end{equation}
For the heat semi-group on a smooth Riemannian manifold, \cite{cf:SvR05} proved that the fact the Ricci curvature of the manifold is bounded from below by $\kappa$ is equivalent to \eqref{wasstab} with $C=1$ for all $\rho\ge 1$ or even a single $\rho\ge 1$. Concerning stochastic differential equations with additive Brownian noise 
\begin{equation}
   \dd X_t= \dd W_t+b(X_t)\dd t,\label{edsadd}
\end{equation} 
the Wasserstein contraction with $C=1$ for one or all $\rho\ge 1$ is equivalent to the monotonicity property
$\forall x,y\in\R^d,\;(x-y).(b(x)-b(y))\le -\kappa|x-y|^2$. To check the sufficiency, it is enough to compute $|X_t-\tilde{X}_t|^\rho$ by the It\^o formula where $(X_t)_{t\ge 0}$ and $(\tilde{X}_t)_{t\ge 0}$ are two solutions driven by the same Brownian motion (synchronous coupling) and with respective initial marginals $P_0$ and $\tilde{P}_0$. Recently, considering $W_f(P,\tilde{P})=\inf_{\pi\in \Pi(P,\tilde{P})}\int_{\R^d\times\R^d}f(|x-y|)\pi(\dd x,\dd y)$ for a well chosen increasing concave function $f$ and using the reflection coupling, \cite{cf:E16} was able to prove the exponential decay of $W_f(P_t,\tilde{P}_t)$ and deduce \eqref{wasstab} with $\rho=1$ and $C>1$ for drift functions $b$ satisfying the monotonicity property only outside some ball. In this setting, the restriction of the Wassertein contraction property with $C=1$ and $\rho=2$ to the case when $\tilde{P}_0$ is the invariant probability measure of the SDE had been proved by \cite{cf:BGG12} by some estimation closely related to \eqref{forderwas} (see also \cite{cf:BGG13} for an extension to a class of SDEs nonlinear in the sense of McKean). The $W_1$ contraction by \cite{cf:E16} was extended to \begin{equation}
   \forall \rho\ge 1,\;\forall t\ge 0,\;\forall x,y\in\R^d,\;W_\rho(P_t,\tilde{P}_t)\le Ce^{-\lambda t/\rho}(|x-y|^{1/\rho}1_{\{|x-y|< 1\}}+|x-y|1_{\{|x-y| \ge 1\}})\label{wasscontrmod}
\end{equation} when $(P_0,\tilde{P}_0)=(\delta_x,\delta_y)$ by \cite{cf:LW16a} for SDEs with additive Brownian noise and by \cite{cf:WJ16} when $\dd W_t$ is replaced by the infinitesimal increment of a L\'evy process with L\'evy measure larger than the one of a symmetric stable process. In the case of a pure jump L\'evy noise, \cite{cf:LW16b} proved the Wassertein contraction with $\rho=1$ under some Doeblin condition on the translations of the L\'evy measure local in the translation parameter.
The Wassertein distance $W_f$ with $f(x)=\sqrt{x}$ had already been used by \cite{cf:BCGMZ13} to prove Wasserstein contraction for the piecewise deterministic TCP Markov process. Recently, \cite{cf:WF16} obtained Wasserstein contraction with $C>1$ for a class of diffusion semi-groups generated by weighted Laplacians on Riemannian manifolds with negative curvature and for related stochastic differential equations with multiplicative noise.

The paper is organized as follows. In Section~\ref{sec:kanto} we give 
some results on the integrability properties of the Kantorovich potentials
and in the one-dimensional case ($d=1$) of the translated optimal transport map. In Section~\ref{sec:main}, we state the main result, 
namely formula~\eqref{evowass} for the evolution of the Wasserstein 
distance between the marginals of two pure jump Markov processes. In Section \ref{sec:birth_and_death}, we apply this formula to estimate the Wasserstein distance between the marginals of two birth and death processes with the same generator and obtain \eqref{wasscontrmod} with $\lambda$ equal to the Wasserstein curvature introduced in \cite{cf:J07} (see also \cite{cf:J09} and \cite{cf:CJ13}) to deal with the case $\rho=1$. The proof of the main result relies on integrability properties with respect to the marginal laws of pure jump processes derived in Section~\ref{sec:integra}. 
Finally, in Section~\ref{se:PDMP}, we extend the previous results to one-dimensional Piecewise Deterministic Markov Processes
using an approximation method. 


\section{Integrability properties of the Kantorovich potentials and the translated optimal transport map}\label{sec:kanto}
We first check that when the probability measures $P$ and $\oP$ on $\R^d$ have finite moments of order higher than $\rho$ then these integrability properties are transmitted to any pair $(\psi,\opsi)$ of Kantorovich potentials associated with $W_\rho(P,\oP)$. To get rid of the undetermined additive constant in the definition of  $\psi$ and $\opsi$, we introduce an appropriate generalization of the variance :
\begin{definition}
 For $q\ge 1$, $P$ a probability measure on $\R^d$ and $\varphi\in L^1(P)$, let  $${\mathcal V}^q_P(\varphi)=\int_{\R^d}|\varphi(x)-P(\varphi)|^q P(\dd x)\mbox{ where }P(\varphi)=\int_{\R^d}\varphi(x)P(\dd x).$$
\end{definition}
Notice that $\int_{\R^d}|\varphi(x)|^q P(\dd x)\le 2^{q-1}\left(|P(\varphi)|^q
+{\mathcal V}^q_P(\varphi)\right)$ and ${\mathcal V}^2_P(\varphi)$ is simply the variance of $\varphi$ under the probability measure $P$.
 \begin{proposition}\label{prop:1epsilon}
Let $P,\oP$ be two probability measures on $\R^d$ such that $\int_{\R^d}|x|^{\rho(1+\varepsilon)}P(\dd x) +\int_{\R^d}|y|^{\rho(1+\varepsilon)}\tilde{P}(\dd y)<\infty$ for some $\varepsilon\ge 0$ and let $(\psi,\opsi)$ be a pair of Kantorovich potentials associated with $W_\rho(P,\oP)$. Then $(\psi,\opsi)\in L^{1+\varepsilon}(P)\times L^{1+\varepsilon}(\oP)$ and
$$\max\left({\mathcal V}^{1+\varepsilon}_P(\psi),{\mathcal V}^{1+\varepsilon}_{\oP}(\opsi)\right)\leq 2^{\rho(1+\varepsilon)}\left(\int_{\R^d}|x|^{\rho(1+\varepsilon)}P(\dd x) +\int_{\R^d}|y|^{\rho(1+\varepsilon)}\tilde{P}(\dd y)\right).$$

\end{proposition}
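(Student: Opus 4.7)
The plan rests on two consequences of Kantorovich duality: the pointwise inequality $\psi(x) + \tilde\psi(y) \ge -|x-y|^\rho$ valid for every $(x,y)\in\R^d\times\R^d$, and the $\pi$-a.e.\ equality \eqref{optipi} on any optimal coupling $\pi$, which upon integration against $\pi$ yields the normalization $P(\psi) + \tilde P(\tilde\psi) = -W_\rho^\rho(P,\tilde P)$. This normalization is exactly what is needed to eliminate the undetermined additive constant in the Kantorovich potentials, so I would work with the centered function $\hat\psi := \psi - P(\psi)$, for which $\int|\hat\psi|^{1+\varepsilon}\,dP = \mathcal V_P^{1+\varepsilon}(\psi)$, and bound the positive and negative parts separately, each contributing half of the desired constant $2^{\rho(1+\varepsilon)}$.

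For the negative part, I would integrate the pointwise duality inequality in $y$ against $\tilde P(dy)$ to obtain, for every $x\in\R^d$,
\[
\hat\psi(x) \;\ge\; W_\rho^\rho(P,\tilde P) - \int_{\R^d}|x-y|^\rho\,\tilde P(dy) \;\ge\; -\int_{\R^d}|x-y|^\rho\,\tilde P(dy).
\]
Jensen's inequality for $t\mapsto t^{1+\varepsilon}$ applied to the probability measure $\tilde P$ then pushes the exponent inside, and the convexity bound $|x-y|^{\rho(1+\varepsilon)} \le 2^{\rho(1+\varepsilon)-1}(|x|^{\rho(1+\varepsilon)} + |y|^{\rho(1+\varepsilon)})$ applied after integration against $P(dx)$ gives $\int(\hat\psi)_-^{1+\varepsilon}\,dP$ bounded by $2^{\rho(1+\varepsilon)-1}$ times the right-hand side of the statement.

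For the positive part, I would exploit the $\pi$-a.e.\ equality $\psi(x) = -|x-y|^\rho - \tilde\psi(y)$ combined with the lower bound on $\tilde\psi(y)$ obtained by integrating the dual inequality $\psi(x')+\tilde\psi(y)\ge -|x'-y|^\rho$ against $P(dx')$; together these yield, for $\pi$-a.e.\ $(x,y)$,
\[
\hat\psi(x) \;\le\; -|x-y|^\rho + \int_{\R^d}|x'-y|^\rho\,P(dx') \;\le\; \int_{\R^d}|x'-y|^\rho\,P(dx').
\]
The key observation is that $(\hat\psi(x))_+^{1+\varepsilon}$ depends only on $x$, so integrating this bound against $\pi(dx,dy)$ recovers $\int(\hat\psi)_+^{1+\varepsilon}\,dP$ on the left, while Jensen followed by Fubini (using that $\pi$ has second marginal $\tilde P$) turns the right-hand side into $\int\!\!\int|x'-y|^{\rho(1+\varepsilon)}\,P(dx')\,\tilde P(dy)$; the same convexity bound finishes the estimate.

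Summing the two halves and using $|\hat\psi|^{1+\varepsilon} = (\hat\psi)_+^{1+\varepsilon} + (\hat\psi)_-^{1+\varepsilon}$ produces the advertised constant $2\cdot 2^{\rho(1+\varepsilon)-1} = 2^{\rho(1+\varepsilon)}$; the bound on $\mathcal V^{1+\varepsilon}_{\tilde P}(\tilde\psi)$ follows verbatim by exchanging the roles of $P$ and $\tilde P$. The main point to watch is that the upper bound on $\hat\psi$ holds only $\pi$-a.e.\ rather than $P$-a.e., which forces one to integrate against $\pi$ in the positive-part step rather than against $P$; this is the only mildly delicate bookkeeping in the argument.
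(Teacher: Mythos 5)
Your argument is correct and yields exactly the paper's constant, but it is organized differently from the paper's proof. The paper works with two independent copies under the product coupling $\pi(\dd x,\dd y)\pi(\dd z,\dd w)$: combining \eqref{dual} with the $\pi$-a.e.\ equality \eqref{optipi} it bounds $|\psi(x)-\psi(z)|^{1+\varepsilon}$ by $\max\left(|z-y|^{\rho(1+\varepsilon)},|x-w|^{\rho(1+\varepsilon)}\right)$, integrates against $\pi\otimes\pi$, and then removes the additive constant by Jensen's inequality in the form $|\psi(x)-P(\psi)|^{1+\varepsilon}\le \int_{\R^d}|\psi(x)-\psi(z)|^{1+\varepsilon}P(\dd z)$. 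You instead eliminate the constant at the outset through the normalization $P(\psi)+\oP(\opsi)=-W_\rho^\rho(P,\oP)$ (which is legitimate, since Kantorovich potentials are by definition in $L^1(P)\times L^1(\oP)$ and attain the dual value), and then treat the positive and negative parts of $\hat\psi=\psi-P(\psi)$ separately: the negative-part bound uses only the pointwise dual constraint integrated in $y$, and so holds for every $x$, while the positive part uses the $\pi$-a.e.\ equality and is correctly integrated against $\pi$ rather than $P$ — the bookkeeping point you flag is handled properly. Each approach has a mild advantage: the paper's doubling argument is more symmetric and compact (one inequality covers both signs at once), whereas yours isolates where optimality of $\pi$ is really needed (only for the upper bound on $\hat\psi$) and produces an everywhere-valid lower bound $\hat\psi(x)\ge -\int_{\R^d}|x-y|^\rho\,\oP(\dd y)$. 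To fully match the statement you should also record, as the paper does right after the definition of ${\mathcal V}^q_P$, that the $L^{1+\varepsilon}$ membership of $\psi$ and $\opsi$ follows from the bound on ${\mathcal V}^{1+\varepsilon}$ together with the finiteness of $P(\psi)$ and $\oP(\opsi)$, via $\int_{\R^d}|\varphi|^{q}\,\dd P\le 2^{q-1}\left(|P(\varphi)|^{q}+{\mathcal V}^q_P(\varphi)\right)$; this is immediate but worth one line.
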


\begin{proof}
Let $\pi\in\Pi(P,\oP)$ be such that $W_\rho^\rho(P,\tilde{P})=\int_{\R^d\times\R^d}|x-y|^{\rho}\pi(\dd x,\dd y)$.
Using \eqref{optipi} for the first and third inequalities and \eqref{dual} for the second and fourth ones, we obtain that $\pi(\dd x,\dd y)\pi(\dd z,\dd w)$ a.e.,
\begin{align*}
   &\psi(x)-\psi(z)\le \psi(x)-\psi(z)-\psi(x)-\opsi(y)=-\psi(z)-\opsi(y)\le |z-y|^\rho\\
&\psi(z)-\psi(x)\le \psi(z)-\psi(x)-\psi(z)-\opsi(w)=-\psi(x)-\opsi(w)\le |x-w|^\rho.
\end{align*}
Therefore we get\begin{equation}\label{ineg_sur_psi}
\pi(\dd x,\dd y)\pi(\dd z,\dd w) \ a.e., \ |\psi(x)-\psi(z)|^{1+\varepsilon}\le \max(|z-y|^{\rho(1+\varepsilon)}, |x-w|^{\rho(1+\varepsilon)}).
\end{equation}
Integrating this inequality with respect to $\pi(\dd x,\dd y)\pi(\dd z,\dd w)$, one obtains
\begin{align*}
   \int_{\R^d\times\R^d}&|\psi(x)-\psi(z)|^{1+\varepsilon}P(\dd x)P(\dd z)\\&\le \int_{\R^d\times\R^d\times\R^d\times\R^d}\left(|z-y|^{\rho(1+\varepsilon)}+|x-w|^{\rho(1+\varepsilon)}\right)\pi(\dd x,\dd y)\pi(\dd z,\dd w)\\
&\le 2^{\rho(1+\varepsilon)-1}\int_{\R^d\times\R^d\times\R^d\times\R^d}\left(|z|^{\rho(1+\varepsilon)}+|y|^{\rho(1+\varepsilon)}+|x|^{\rho(1+\varepsilon)}+|w|^{\rho(1+\varepsilon)}\right)\pi(\dd x,\dd y)\pi(\dd z,\dd w)\\&=2^{\rho(1+\varepsilon)}\left(\int_{\R^d}|x|^{\rho(1+\varepsilon)}P(\dd x) +\int_{\R^d}|y|^{\rho(1+\varepsilon)}\tilde{P}(\dd y)\right).
\end{align*}
The moment ${\mathcal V}^{1+\varepsilon}_P(\psi)$ is smaller than the left-hand side and therefore than the right-hand side, since, by Jensen's inequality, $|\psi(x)-P(\psi)|^{1+\varepsilon}\le \int_{\R^d}|\psi(x)-\psi(z)|^{1+\varepsilon}P(dz)$.
By symmetry, the same upper-bound holds for $ {\mathcal V}^{1+\varepsilon}_{\oP}(\opsi)$.\end{proof}

Now, we focus on the integrability of the translated transport map. This is an important technical point to give a sense to~\eqref{evowass} when the Markov process has jumps. Precisely, if $k(x,\dd z)$ denotes the jump kernel, we have to check the integrability of $\int_{\R^d}\int_{\R^d}|\psi(z)-\psi(x)|^\rho P(\dd x)k(x,\dd z)$, where $P$ is any time marginal of the process. For pure jump processes, we will be able to do this directly by taking advantage of the specific decomposition~\eqref{eq:marginale} of the time marginals. For PDMP, this is no longer possible and we typically would like to upper bound $\int_{\R^d}|\psi(x-y)-\psi(x)|^\rho P(\dd x) $ for different values of~$y\in \R^d$. In dimension~1,  this problem boils down from~\eqref{potential_1d} and the monotonicity of $z\mapsto T(z)$ to bound $\rho |y| \int_\R \max(|T(x-y)|^{\rho-1},|T(x)|^{\rho-1}) P(\dd x)$ from above. In view of the expression of the optimal transport map $T$, the finiteness of $\int_\R|T(x-y)|^{\rho-1}P(\dd x)$ is a condition 
intricately mixing the tails of $P$ and $\oP$. In the next proposition, we obtain a less intricate upper bound for this integral.

\begin{proposition}\label{prop_transport_1d}
Let $X$ and $\oX$ be two real random variables distributed according to $P$ and $\oP$. We note $F$ and $\tilde{F}$ the corresponding cumulative distribution functions,  $F^{-1}$ and $\tilde{F}^{-1}$ their right-continuous pseudo-inverse. We assume that $F$ is continuous and consider $T(x)=\tilde{F}^{-1}(F(x))$ the optimal transport map given by~\eqref{potential_1d}. We assume that for a given $y>0$, there is a function $\varphi_y\in L^{1}([0,1],\R_+)$ such that  
\begin{equation}\label{cond_phiy}\forall u\in (0,1), F(F^{-1}(u)+y)-u\leq  \int_0^u \varphi_y(v)\dd v. 
\end{equation}
Then, for any $q>0$, $\delta \in [0,+\infty]$, we have
\begin{equation}\label{eq: stima2TM} 
 \int_\R |T(x-y)|^q P(\dd x) \leq \E(|\oX|^q)+ \| |\oX|^q \|_{1+\frac{1}{\delta}} \| \varphi_y(U)  \|_{1+\delta},
\end{equation}
where $U$ is a uniform random variable on $[0,1]$, and the convention $1/0=+\infty$, $1/+\infty=0$. 
\end{proposition}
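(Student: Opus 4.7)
The plan is to start from the explicit form of the optimal coupling in dimension one. Since $F$ is continuous, $F^{-1}(U)$ is distributed according to $P$, so that
\[
\int_\R |T(x-y)|^q P(\dd x) = \int_0^1 \bigl|\tilde F^{-1}(V(u))\bigr|^q \dd u,\quad V(u):=F\bigl(F^{-1}(u)-y\bigr).
\]
Continuity of $F$ yields $F(F^{-1}(u))=u$, and since $y>0$ we obtain $V(u)\le u$. Using twice the equivalence $F(z)\le v\Leftrightarrow z\le F^{-1}(v)$ (valid for continuous $F$), one identifies $\{u\in[0,1]:V(u)\le v\}=\{u:F^{-1}(u)\le F^{-1}(v)+y\}$, whose Lebesgue measure is $F(F^{-1}(v)+y)$. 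Hence the pushforward $\mu$ of the Lebesgue measure on $[0,1]$ under $V$ satisfies, by \eqref{cond_phiy},
\[
\mu([0,v])=F\bigl(F^{-1}(v)+y\bigr)\le v+\int_0^v\varphi_y(w)\dd w,\quad v\in[0,1].
\]

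Next I would split $|a|^q=a_+^q+a_-^q$ and treat separately $h(v):=\max(\tilde F^{-1}(v),0)^q$ (nondecreasing) and $g(v):=\max(-\tilde F^{-1}(v),0)^q$ (nonincreasing). Since $V(u)\le u$, the nondecreasing part immediately satisfies $\int_0^1 h(V(u))\dd u\le\int_0^1 h(u)\dd u$. For the nonincreasing part the layer cake formula gives
\[
\int_0^1 g(V(u))\dd u=\int_0^\infty \mu(\{v:g(v)>t\})\dd t.
\]
Since $g$ is nonincreasing, $\{v:g(v)>t\}$ is an initial interval $[0,v_t)$ with $v_t:=\sup\{v:g(v)>t\}$, so the bound on $\mu$, Fubini's theorem, and the identity $\int_0^\infty \mathbf 1_{\{w<v_t\}}\dd t=g(w)$ yield
\[
\int_0^1 g(V(u))\dd u\le\int_0^1 g(u)\dd u+\int_0^1 g(w)\varphi_y(w)\dd w.
\]

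Adding the two contributions and using $g(w)\le|\tilde F^{-1}(w)|^q$ gives
\[
\int_\R |T(x-y)|^q P(\dd x)\le\E\bigl[|\oX|^q\bigr]+\int_0^1|\tilde F^{-1}(w)|^q\varphi_y(w)\dd w.
\]
Applying Hölder's inequality to the last integral with the conjugate exponents $1+1/\delta$ and $1+\delta$ (with the stated conventions at $\delta\in\{0,+\infty\}$) then delivers \eqref{eq: stima2TM}.

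The main obstacle is the handling of the nonincreasing piece: the cumulative bound $\mu([0,v])\le v+\int_0^v\varphi_y$ does not translate into a pointwise domination of $\mu$ by $(1+\varphi_y)\,\text{Leb}$ as measures. The layer cake argument circumvents this because the superlevel sets of a nonincreasing $g$ are intervals of the form $[0,v_t)$ to which the cumulative hypothesis applies verbatim; the positive/negative decomposition is precisely what allows monotonicity to be invoked on each piece in the right direction.
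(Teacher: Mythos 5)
Your proof is correct, and while the overall strategy mirrors the paper's (isolate the contribution where monotonicity fails to give a direct comparison, feed the hypothesis \eqref{cond_phiy} into a Fubini argument, finish with H\"older), the technical vehicle is genuinely different. The paper stays on the real line: using that $T$ is nondecreasing and $y>0$, it isolates the defect $(|T(x-y)|^q-|\min(T(x),0)|^q)\ind_{\{T(x-y)<0\}}$, writes it as a Lebesgue--Stieltjes integral against $\dd H_q$ with $H_q(z)=-(-\min(T(z),0))^q$, integrates in $x$ to produce $F(z+y)-F(z)$, and only then passes to $[0,1]$ by the change of variables $u=F(z)$ (with the bookkeeping around $x_0=\sup\{x:\,T(x)<0\}$). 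You instead move to quantile space at the outset via $F^{-1}(U)\sim P$, split $|\tilde F^{-1}|^q$ into a nondecreasing part, killed immediately by $V(u)\le u$, and a nonincreasing part $g$, and replace the Stieltjes manipulation by a layer-cake argument for the pushforward $\mu$ of Lebesgue measure by $V$, whose distribution function is exactly $F(F^{-1}(v)+y)$ --- precisely where \eqref{cond_phiy} enters; your layer cake on $g$ plays the role of the paper's measure $\dd\bar H_q$, and your key observation (that the cumulative bound on $\mu$ can only be used on initial intervals, which is exactly what the superlevel sets of a nonincreasing $g$ are) is the correct substitute for the paper's restriction of $\dd H_q$ to $(-\infty,x_0]$. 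Two harmless imprecisions: the superlevel set $\{g>t\}$ may be $[0,v_t]$ rather than $[0,v_t)$, and $\int_0^\infty\ind_{\{w<v_t\}}\dd t$ equals $g(w^+)$ rather than $g(w)$ in general; both are immaterial because $\tilde F^{-1}$ is the right-continuous inverse (so $g(w^+)=g(w)$), and in any case $g(w^+)\le g(w)$ and $\varphi_y\ge 0$ keep the inequality in the right direction, while $\mu(\{0\})=0$ follows by letting $v\to 0^+$ in the cumulative bound, so the possible value $\tilde F^{-1}(0)=-\infty$ never contributes.
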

In~\eqref{eq: stima2TM}, we use the standard definition $\| Y\|_{\alpha}=\E[|Y|^\alpha]^{1/\alpha}$ for $\alpha>0$ and $\| Y\|_{\infty}=\inf \{m>0,\P(|Y|\le m)=1 \}$ for a real valued random variable~$Y$. Let us stress here that each side of~\eqref{eq: stima2TM} may be equal to $+\infty$, but the inequality always holds. The key assumption~\eqref{cond_phiy} implies in particular that $F^{-1}(0^+):=\lim_{u \rightarrow 0^+}F^{-1}(u)=-\infty$. If this was not the case, we would have $F(F^{-1}(0^+)+y)=0$ and thus $F^{-1}(0^+)\ge F^{-1}(0^+)+y$ which is contradictory. Last, let us mention that assuming $\varphi_y\ge 0$ is not restrictive: if~\eqref{cond_phiy} is satisfied by $\varphi_y\in L^{1}([0,1],\R)$, it is then also satisfied by $\max(0,\varphi_y)\in L^{1}([0,1],\R_+)$. If $X$ has a positive density $p(x)$ (i.e. $P(\dd x)=p(x)\dd x$), a natural choice is  $\varphi_y(u)=\max\left(\frac{p(F^{-1}(u)+y)}{p(F^{-1}(u))}-1,0\right)$. Heuristically, condition~\eqref{cond_phiy} is satisfied when $p(x)$ has an heavy 
left-
tail. For the  exponential density on $\R_-$ $p(x)=\ind_{\{x<0\}}\lambda e^{\lambda x}$ with $\lambda>0$, $\max\left(\frac{p(F^{-1}(u)+y)}{p(F^{-1}(u))}-1,0\right)=(e^{\lambda y}-1)\ind_{\{\frac{1}{\lambda}\log(u)+y<0\}}$ is bounded. For $p(x)=\ind_{\{x<-1\}}\frac{\alpha-1}{|x|^\alpha}$ with $\alpha>1$, $\varphi_y(u)=\ind_{\{1-yu^{\frac{1}{\alpha-1}}>u^{\frac{1}{\alpha-1}}\}}(1-yu^{\frac{1}{\alpha-1}})^{-\alpha} -1\le (1+y)^\alpha -1$ is again bounded.  

\begin{proof}

Since $T$ is a nondecreasing and $y>0$, we have
 \begin{align}
|T(x-y)|^q &\leq |T(x-y)|^q\ind_{\{T(x-y)<0\}} + |T(x)|^q\ind_{\{T(x-y) \ge 0\}}=(|T(x-y)|^q- |T(x)|^q)\ind_{\{T(x-y)<0\}}+ |T(x)|^q \nonumber \\
&\leq (|T(x-y)|^q- |\min(T(x),0)|^q)\ind_{\{T(x-y)<0\}}+ |T(x)|^q. \label{eq:lemmatm1}
 \end{align}
Let $x_0=\sup \{x \in \R, T(x) < 0 \}$ with convention $\sup \emptyset = -\infty$. We note $H_q(z)=-(-\min(T(z),0))^q$. This is a nondecreasing right-continuous function that induces a measure denoted by $\dd H_q(z)$. We have
\begin{align*}
 (|T(x-y)|^q- |\min(T(x),0)|^q)\ind_{\{T(x-y)<0\}}&=(H_q(x)-H_q(x-y))\ind_{\{T(x-y)<0\}}\\&=\ind_{\{T(x-y)<0\}} \int_{\R} \ind_{\{x-y<z\le x\}} \dd H_q(z).
\end{align*}
We note that $\{x-y < x_0 \} \subset \{T(x-y)<0 \} \subset \{x-y \le x_0 \}$. Since $F$ is continuous, and using the Fubini-Tonelli theorem, we obtain
\begin{align*}
\int_\R (|T(x-y)|^q- |\min(T(x),0)|^q)\ind_{\{T(x-y)<0\}} P(\dd x)&= \int_\R   \int_{\R}\ind_{\{x-y < x_0\}} \ind_{\{x-y<z\le x\}}  P(\dd x) \dd H_q(z) \\
&=\int_\R F(\min(x_0,z)+y)-F(z) \dd H_q(z) \\
&= \int_\R \ind_{\{z \le x_0\}}( F(z+y)-F(z)) \dd H_q(z), 
\end{align*}
since $\dd H_q((x_0,+\infty))=0$. We define $\bar{H}_q(u)=-(-\min(\tilde{F}^{-1}(u),0))^q$ for $u\in (0,1)$ and $\dd \bar{H}_q(u)$ the corresponding measure on $(0,1)$. By applying a change of variables in the Lebesgue-Stieltjes integral (see e.g. Proposition 4.10 p.~9 of~\cite{cf:RY}), we obtain since $z\le F^{-1}(F(z))$ and $\{z \le x_0\}\subset\{F(z) \le F(x_0)\}$
\begin{align*}
\int_\R \ind_{\{z \le x_0\}}( F(z+y)-F(z)) \dd H_q(z)&  \le \int_\R \ind_{\{F(z) \le F(x_0)\}}( F(F^{-1}(F(z))+y)-F(z)) \dd H_q(z) \\
&=\int_0^1   \ind_{\{ u \le F(x_0)\}} (F(F^{-1}(u)+y)-u )\dd \bar{H}_q(u) \\
&\le \int_0^1   \ind_{\{ u \le F(x_0)\}}  \int_0^u\varphi_y(v) \dd v \dd \bar{H}_q(u). 
\end{align*}
Since $\varphi_y\ge 0$, we have by Fubini-Tonelli theorem  that
\begin{align*}
\int_0^1 \int_0^1   \ind_{\{ u \le F(x_0)\}} \ind_{\{0<v<u<1\}}\varphi_y(v)\dd v \dd \bar{H}_q(u) &=\int_0^1 \int_0^1   \ind_{\{ v< u \le F(x_0)\}} \dd \bar{H}_q(u) \varphi_y(v)\dd v \\
&=\int_0^1  (\bar{H}_q(F(x_0))- \bar{H}_q(v))^+ \varphi_y(v) \dd v \\
&= \int_0^{F(x_0)} |\tilde{F}^{-1}(v)|^q \varphi_y(v) \dd v  \le  \| |\oX|^q \|_{1+\frac{1}{\delta}} \| \varphi_y(U)  \|_{1+\delta},
\end{align*}
by using $\bar{H}_q(F(x_0))=0$ and H\"older's inequality.
\end{proof}
\begin{remark}\label{rk_xtilde_bounded_below} Let us assume $F$ to be continuous. In the particular case where $\tilde{X}$ is bounded from below (i.e. $\tilde{F}^{-1}(0^+)>-\infty$), we get from the first inequality in~\eqref{eq:lemmatm1} that $|T(x-y)|^q \leq |\tilde{F}^{-1}(0^+)|^q + |T(x)|^q$, and therefore 
$$ \int_\R |T(x-y)|^q P(\dd x) \leq |\tilde{F}^{-1}(0^+)|^q + \E(|\oX|^q).$$
\end{remark}

 \section{Evolution of the Wasserstein distance between two pure jump Markov processes}
\label{sec:main}

 Let $\{X_t\}_{t\geq 0}$ and $\{\oX_t\}_{t\ge 0}$ be pure jump $\R^d$-valued Markov processes  with respective marginal laws $\{P_t\}_{t\ge 0}$ and $\{\oP_t\}_{t\ge 0}$ and
 infinitesimal generators given  by
 \begin{align}\label{eq: infinitesimalgen}
 L f(x)&= \lambda(x) \left(\int_{\R^d} k(x,\dd y)\left( f(y)-f(x)\right) \right)\\
 \oL f(x)&= \olambda(x) \left(\int_{\R^d} \ok(x,\dd y)\left( f(y)-f(x)\right) \right)\,.\end{align}
The jump intensities $\lambda$ and $\olambda :\R^d\to\R_+$ are measurable functions and $k$ and $\ok$ are Markov kernels (for all Borel subset $B$ of $\R^d$, $\R^d\ni x\mapsto k(x,B)$ is measurable and for all $x\in\R^d$, $k(x,.)$ is a probability measure on $\R^d$ endowed with its Borel sigma-field). To ease the trajectorial interpretation of $(\lambda,k)$ and avoid fake jumps, we suppose that  $\forall x\in\R^d$, $\lambda(x)k(x,\{x\})=0$. This is not restrictive since this condition may be achieved without modifying the generator $L$ by replacing $\lambda(x)$ by $\lambda(x)(1-k(x,\{x\}))$ and $k(x,.)$ by $k(x,.\cap\{x\}^c)/(1-k(x,\{x\}))$.
\begin{theorem}\label{teo:principale} Assume that 
$\bar{\lambda}:=\sup_{x\in\R^d}\max(\lambda(x),\olambda(x))<\infty$ and that $t\mapsto \E[|X_t|^{\rho(1+\varepsilon)}+|\oX_t|^{\rho(1+\varepsilon)}]$ is locally bounded for some $\varepsilon>0$. Let for each $t\ge 0$, $(\psi_t,\opsi_t)$ be a pair of Kantorovich potentials associated with $W_\rho(P_t,\oP_t)$. Then $t\mapsto \int_{\R^d} |L \psi_t(x)| P_t(\dd x) +\int_{\R^d}|\oL\, \opsi_t(x) |\oP_t(\dd x)$ is locally bounded on $(0,+\infty)$, $t\mapsto W_\rho^\rho(P_t,\oP_t)$ is locally Lipschitz on $(0,+\infty)$
  and for almost any $t \in(0,\infty)$
  \begin{equation}\label{eq: main result}
   \frac{\dd }{\dd t} W_\rho^{\rho}(P_t,\oP_t)= - \int_{\R^d} L \psi_t(x) P_t(\dd x) -\int_{\R^d}\oL\, \opsi_t(x) \oP_t(\dd x)\,.
  \end{equation}
Moreover, $t\mapsto \int_{\R^d} |L \psi_t(x)|P_t(\dd x)+\int_{\R^d} |L \opsi_t(x)|\oP_t(\dd x)$ is locally integrable on $[0,\infty)$ and for every $ t\geq 0$ the following integral formula holds true
  \begin{equation}\label{eq:formula integrale}
   W_\rho^{\rho}(P_t,\oP_t)= W_\rho^{\rho}(P_0,\oP_0)- 
   \int_0^t \left[ \int_{\R^d} L \psi_r(x) P_r(\dd x) +\int_{\R^d}\oL\, \opsi_r(x) \oP_r(\dd x )\right] \dd r\,.
  \end{equation}
 \end{theorem}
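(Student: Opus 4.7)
The plan is to rigorously execute the heuristic two-sided squeeze that led to \eqref{forderwas}, which combines two ingredients: the duality bound $W_\rho^\rho(P_s,\oP_s) \geq -\int \psi_t\,\dd P_s - \int \opsi_t\,\dd\oP_s$ (saturated at $s=t$), and the Dynkin-type identity $\int \psi_t\,\dd P_r - \int \psi_t\,\dd P_s = \int_s^r \int L\psi_t\,\dd P_u\,\dd u$. Together they will bracket the increments of $r \mapsto W_\rho^\rho(P_r,\oP_r)$ between integrals of the candidate derivative evaluated with frozen Kantorovich potentials. The principal task, from which all regularity claims of the theorem follow, is to show that $t\mapsto\int|L\psi_t|\,\dd P_t+\int|\oL\opsi_t|\,\dd\oP_t$ is locally bounded on $(0,\infty)$ and locally integrable on $[0,\infty)$.

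From the pointwise bound $|L\psi_t(x)| \leq \bar\lambda\!\int|\psi_t(y)-\psi_t(x)|\,k(x,\dd y)$ (and its twin for $\oL\opsi_t$), this integrability reduces to a bound on the double integral $\iint |\psi_t(y)-\psi_t(x)|\,k(x,\dd y)\,P_t(\dd x)$. The difficulty is that the jump kernel $k$ is unrelated to the optimal transport coupling, so the pointwise estimate \eqref{ineg_sur_psi} on differences of $\psi_t$ cannot be invoked directly. The way around is to exploit the structure of the marginal law: since $\lambda\leq\bar\lambda<\infty$, a uniformisation/Duhamel argument decomposes $P_t$ as a mixture of iterated push-forwards of $P_0$ by sub-Markov kernels built from $k$, and consequently $\int k(x,\cdot)\,P_t(\dd x)$ is dominated by a probability measure of the same type whose moments are controlled by those of $X_s$ and $\oX_s$ for $s\in[0,t]$. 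Combining this decomposition (to be developed in Section~\ref{sec:integra}) with Proposition~\ref{prop:1epsilon} and the local boundedness of $t\mapsto \E[|X_t|^{\rho(1+\varepsilon)}+|\oX_t|^{\rho(1+\varepsilon)}]$ yields the required integrability.

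Once integrability is in hand, the Dynkin identity for $\psi_t$ follows from the standard martingale property of $\psi_t(X_\cdot)-\int_0^\cdot L\psi_t(X_u)\,\dd u$, which extends to measurable functions in the extended domain of the bounded-intensity generator (and similarly for $\opsi_t$). Applying the duality inequality at times $s$ and $t$ and combining with these identities produces, for $0<s<t$,
\begin{equation*}
-\!\int_s^t\!\Big[\!\int L\psi_s\,\dd P_u+\!\int\oL\opsi_s\,\dd\oP_u\Big]\dd u \leq W_\rho^\rho(P_t,\oP_t)-W_\rho^\rho(P_s,\oP_s) \leq -\!\int_s^t\!\Big[\!\int L\psi_t\,\dd P_u+\!\int\oL\opsi_t\,\dd\oP_u\Big]\dd u.
\end{equation*}
Local boundedness of the candidate derivative converts this squeeze into local Lipschitz continuity of $r\mapsto W_\rho^\rho(P_r,\oP_r)$ on $(0,\infty)$; at any Lebesgue point $t$ of the candidate, sending $s\to t$ on both sides yields \eqref{eq: main result}. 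The local integrability down to $r=0$ extends absolute continuity to $[0,\infty)$, and \eqref{eq:formula integrale} follows by integration.

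The single significant obstacle is the integrability step: duality, Dynkin, and the squeeze-to-derivative argument are essentially standard once the integrability bounds hold, whereas controlling $\iint|\psi_t(y)-\psi_t(x)|\,k(x,\dd y)\,P_t(\dd x)$ requires genuinely new input, since the available regularity of $\psi_t$ is tied to the optimal transport plan rather than to $k$. The bounded-intensity assumption on the pure jump dynamics is precisely what enables the disintegration that reduces this double integral to a single integration against a marginal with controlled moments.
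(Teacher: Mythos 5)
Your overall architecture --- the two-sided duality squeeze with frozen Kantorovich potentials, the Dynkin identity for the bounded-intensity pure jump generator, and the reduction of all regularity claims to the integrability of $\int_{\R^d}\int_{\R^d}\lambda(x)|\psi_t(y)-\psi_t(x)|\,k(x,\dd y)P_r(\dd x)$ via the decomposition of the marginals over the number of jumps --- is the same as the paper's. The genuine gap is in the step ``at any Lebesgue point $t$ of the candidate, sending $s\to t$ on both sides yields the derivative formula''. After dividing your squeeze by $t-s$, the upper bound reads
\begin{equation*}
\frac{W_\rho^\rho(P_t,\oP_t)-W_\rho^\rho(P_s,\oP_s)}{t-s}\;\le\;-\frac{1}{t-s}\int_s^t\Big(\int_{\R^d}L\psi_t(x)\,P_u(\dd x)+\int_{\R^d}\oL\,\opsi_t(x)\,\oP_u(\dd x)\Big)\dd u,
\end{equation*}
where the potential index is frozen at $t$ while the integration variable $u$ runs over $[s,t]$. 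To let $s\to t$ you need, for this \emph{fixed} $t$, convergence of the averages of $u\mapsto\int_{\R^d}L\psi_t(x)P_u(\dd x)+\int_{\R^d}\oL\opsi_t(x)\oP_u(\dd x)$ at the point $u=t$. A Lebesgue point of the diagonal map $t\mapsto -\int L\psi_t\,\dd P_t-\int\oL\opsi_t\,\dd\oP_t$ gives no control on these averages, because the function being averaged changes with the frozen index: for $G(v,u)=\ind_{\{u>v\}}$ the diagonal vanishes identically while every forward average equals one. The same issue occurs in the lower bound (frozen index $s$, averages taken to the right of $s$). So an extra argument is needed, and it is not soft: the paper proves, for each fixed $t>0$, the one-sided continuity at $r=t$ of $r\mapsto\int_{\R^d}L\psi_t(x)P_r(\dd x)$ by writing $P_r(\dd x)=e^{-\lambda(x)(r-t)}P_t(\dd x)+Q_{(t,r)}(\dd x)$ with $Q_{(t,r)}\ge 0$ of total mass at most $1-e^{-\bar\lambda(r-t)}$ (and an analogous decomposition for $r\le t$), combined with H\"older's inequality and the locally bounded $(1+\delta)$-moments of $L\psi_t$ under $P_r$ supplied by Proposition~\ref{prop:mom}. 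This uses the jump structure a second time, beyond Dynkin and the integrability estimates, and your plan omits it.

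A second, smaller omission: passing from the identity on $[s,t]$ with $0<s\le t$ to the formula started at $0$ requires $W_\rho^\rho(P_s,\oP_s)\to W_\rho^\rho(P_0,\oP_0)$ as $s\to 0^+$; local integrability of the candidate only gives convergence of the right-hand side, not identification of the limit of the left-hand side with its value at $0$. The paper checks this via $W_\rho^\rho(P_s,P_0)\le\E[|X_s-X_0|^\rho]$, bounded by H\"older's inequality using the $\rho(1+\varepsilon)$-moments and $\P(X_s\neq X_0)\le 1-e^{-\bar\lambda s}$. Apart from these two points, your route coincides with the paper's.
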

\begin{remark}
   According to Lemma \ref{lemma:momenti} below the finiteness of
$$\int_{\R^d}|x|^{\rho(1+\varepsilon)}(P_0(\dd x)+\oP_0(\dd x))+\sup_{x\in\R^d} \int_{\R^d}(k(x,\dd y)+\ok(x,\dd y))|y-x|^{\rho(1+\varepsilon)}$$ is sufficient to ensure that
$t\mapsto \E[|X_t|^{\rho(1+\varepsilon)}+|\oX_t|^{\rho(1+\varepsilon)}]$ is locally bounded.
\end{remark}
The next proposition, the proof of which is postponed to Section \ref{sec:integra}, and the next lemma respectively state the integrability properties and the integral formula for $\int_{\R^d}f(x)P_t(\dd x)-\int_{\R^d}f(x)P_s(\dd x)$ needed to establish Theorem \ref{teo:principale}.
\begin{proposition}\label{prop:mom}
   Under the assumptions of Theorem \ref{teo:principale}, for $\delta\in[0,\varepsilon)$, 
\begin{align*}
   &(t,r)\mapsto \int_{\R^d}|\psi_t(x)-P_t(\psi_t)|^{1+\delta}P_r(\dd x)\mbox{ is locally bounded on }(0,+\infty)\times [0,+\infty),\\&(t,r)\mapsto \int_{\R^d}\left(\int_{\R^d}\lambda(x) k(x,\dd y)|\psi_t(y)-\psi_t(x)|\right)^{1+\delta}P_r(\dd x)\mbox{ is locally bounded on }(0,+\infty)^2,\\
&t\mapsto \int_{\R^d\times \R^d}\lambda(x) k(x,\dd y)|\psi_t(y)-\psi_t(x)|P_t(\dd x)\mbox{ is locally integrable on }[0,+\infty)
.\end{align*}\end{proposition}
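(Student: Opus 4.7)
The plan is to transfer the $L^{1+\varepsilon}(P_t)$ control of the Kantorovich potential $\psi_t$ provided by Proposition~\ref{prop:1epsilon} into an $L^{1+\delta}(P_r)$ control for $r\neq t$. The main structural input is that, for a pure jump Markov process with $\sup_x\lambda(x)\le\bar\lambda$, conditioning on the event that no jump occurs between the two times yields the measure comparison
\[
P_{r\wedge t}(A)\le e^{\bar\lambda|r-t|}P_{r\vee t}(A)\quad\text{for every Borel } A\subset\R^d,
\]
together with the uniformization representation $Q_h=\sum_{n\ge 0}e^{-\bar\lambda h}\frac{(\bar\lambda h)^n}{n!}\bar Q^n$ with $\bar Q(x,\cdot)=\frac{\lambda(x)}{\bar\lambda}k(x,\cdot)+(1-\frac{\lambda(x)}{\bar\lambda})\delta_x$; these two facts should underlie the explicit decomposition of the marginal laws invoked in the text.

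For the first claim, Jensen's inequality together with Proposition~\ref{prop:1epsilon} give local boundedness of $t\mapsto\int|\psi_t-P_t(\psi_t)|^{1+\delta}\,\dd P_t$ via the local boundedness of $M_t:=\int|x|^{\rho(1+\varepsilon)}(P_t+\oP_t)(\dd x)$. The case $r\le t$ then follows at once from the measure comparison above with multiplicative factor $e^{\bar\lambda(t-r)}$. For $r>t$ I would write $P_r=P_tQ_{r-t}$, expand the Poisson series, and bound $\int|\psi_t(y)-P_t(\psi_t)|^{1+\delta}\bar Q^n(x,\dd y)P_t(\dd x)$ uniformly in $n$ up to Poisson-summable factors; each application of $\bar Q$ introduces one more jump, and on that jump the pointwise estimate~\eqref{ineg_sur_psi} bounds $|\psi_t(y)-\psi_t(x)|$ by $\max(|x-w|^\rho,|y-y'|^\rho)$ for $\pi_t$-typical pairs $(x',y'),(z,w)$. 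Integrating against an optimal coupling $\pi_t$ and using H\"older to trade the exponent $1+\delta$ for $1+\varepsilon$ then absorbs the loss into the moments of $P_s$ and $\oP_s$ for $s$ in a small neighbourhood of $t$. The second claim follows by applying Jensen to the probability kernel $k(x,\dd y)$ together with $(a+b)^{1+\delta}\le 2^\delta(a^{1+\delta}+b^{1+\delta})$ and $\lambda\le\bar\lambda$; after centring around $P_t(\psi_t)$ one reduces to the first claim and to a bound on $\int\int|\psi_t(y)-P_t(\psi_t)|^{1+\delta}k(x,\dd y)P_r(\dd x)$, which the forward Kolmogorov identity $\partial_sP_s=L^*P_s$ controls, over a short interval $[r,r+h]$, by an average of $\int|\psi_t-P_t(\psi_t)|^{1+\delta}\,\dd P_s$ for $s\in[r,r+h]$, again a Step~1 quantity. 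The third claim then follows by H\"older integration of the second in $r$, the behaviour at $r=0$ being taken care of by the Poisson decomposition $P_r=e^{-\bar\lambda r}P_0+(1-e^{-\bar\lambda r})\mu_r$.

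The main obstacle is the forward-in-time propagation $r>t$ in the first claim. Since $\psi_t$ is defined only $P_t$-almost everywhere and enjoys no pointwise growth bound uniform in $t$, one cannot simply change measure via a Radon--Nikodym density, and one must rely on the $\rho$-transform inequality~\eqref{ineg_sur_psi} only through pairs of points in the support of an optimal coupling. The H\"older interpolation has to be arranged so that the Poisson weights $e^{-\bar\lambda h}(\bar\lambda h)^n/n!$ dominate the polynomial-in-$n$ blow-up produced by iterating the jump kernel against the ambient moment bound $M_s$; this is ultimately why the strict inequality $\delta<\varepsilon$ is required.
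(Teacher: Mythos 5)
Your backward-in-time step is fine (the no-jump bound $P_{r}\le e^{\bar\lambda(t-r)}P_t$ for $r\le t$ is exactly what the paper uses), but the forward-in-time step $r>t$ of your first claim — which, as you note, everything else hinges on — has a genuine gap. You propose to expand $P_r=P_tQ_{r-t}$ by uniformization and to control $\psi_t$ at the points reached after $n$ applications of $\bar Q$ through the pointwise estimate \eqref{ineg_sur_psi}. But \eqref{ineg_sur_psi} is only a $\pi_t(\dd x,\dd y)\pi_t(\dd z,\dd w)$-a.e.\ statement: it controls $|\psi_t(x)-\psi_t(z)|$ only for pairs of points carried by the optimal coupling, i.e.\ essentially for $P_t\otimes P_t$-typical pairs. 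The law $P_t\bar Q^n$ of the point after $n$ jumps is in general \emph{not} absolutely continuous with respect to $P_t$, so "$\pi_t$-typical" gives you nothing about $\psi_t$ at jump destinations, and no pointwise growth bound on $\psi_t$ is available to substitute. The paper's way out is different and is the missing idea here: it compares the marginals \emph{at fixed number of jumps}, proving $P_{n,r}\le e^{\bar\lambda(t-r)^+}(r/t)^nP_{n,t}$ (equation \eqref{eq:relazione pns pnt}) by rescaling the jump times in \eqref{eq:Pnt}; the spatial structure of $P_{n,r}$ and $P_{n,t}$ is identical, only the time weights differ. Consequently $\psi_t$ is only ever integrated against pieces of $P_t$ itself, where Proposition~\ref{prop:1epsilon} applies, and the geometric factor $(r/t)^n$ is absorbed by H\"older in $n$ against $P_{n,t}(\R^d)\le(\bar\lambda t)^n/n!$ — this is \eqref{eq:integptps} and is where $\delta<\varepsilon$ really enters. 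Your sketch does not contain this comparison, and the mechanism you do invoke would fail.

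There is a second, quantitative gap in your treatment of the second and third claims. Reducing the second claim via the forward Kolmogorov identity can be made to work (you would need a truncation of the unbounded test function plus monotone convergence to justify the identity, and the no-jump lower bound $P_s\ge e^{-\bar\lambda(s-r)}P_r$ to pass from a time-average over $[r,r+h]$ to the value at $r$), but for the third claim the relevant issue is the behaviour as $t\to0^+$ on the diagonal $r=t$, not "$r=0$", and your device only yields a bound of order $1/t$ there (the factor $1/h$ with $h\asymp t$ sits \emph{outside} any H\"older exponent), which is not locally integrable. The paper's Lemma~\ref{remark: rapporto integrabilita} is precisely what repairs this: the comparison $\lambda(x)k(x,\dd y)P_{n,t}(\dd x)\le\frac{n+1}{t}\,\frac{t^{n+1}}{(n+1)!}Q_{n+1}(\dd y)\le\frac{n+1}{t}e^{\bar\lambda t}P_{n+1,t}(\dd y)$ followed by H\"older in the jump index $n$ puts the $1/t$ \emph{inside} the power $1/(1+\varepsilon)$, see \eqref{intptpkt}, producing the rate $t^{-1/(1+\varepsilon)}$ which is integrable at $0$ and gives the local integrability stated in the proposition. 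Without an argument of this type (or another source of the fractional rate), your proof of the third claim does not go through.
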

\begin{remark}
   For $r>0$, $P_0$ is absolutely continuous with respect to $P_r$ (see \eqref{eq:marginale} below) but, in general, $P_r$ is not absolutely continuous with respect to $P_0$ so that the finiteness of $\int_{\R^d}|\psi_0(x)-P_0(\psi_0)|^{1+\delta}P_r(\dd x)$ is not guaranted.
\end{remark}
 \begin{lemma}\label{lemma:dynkin}
  Let $f:\R^d\to\bar\R$ be a measurable function. If for $0\leq s\leq t$
  \begin{equation}\label{eq:cond dynkin}
   \int_s^t \int_{\R^d\times\R^d}\lambda(x) |f(y)-f(x)|k(x,\dd y)P_r(\dd x)\dd r< +\infty
  \end{equation}
then $f(X_t)-f(X_s)$ is integrable and
\begin{equation}\label{eq:form dynk}
 \E[f(X_t)-f(X_s)]=
 \int_s^t \int_{\R^d} Lf(x) P_r(\dd x)\dd r
 \end{equation}
 \end{lemma}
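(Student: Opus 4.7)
The plan is to use the compensation formula for the jump random measure of $X$. Since $\bar\lambda := \sup_x \lambda(x) < \infty$, the jump times $(T_n)_n$ of $X$ form a locally finite sequence, and the associated random measure $\mu^X(\dd r, \dd y) := \sum_n \delta_{(T_n, X_{T_n})}(\dd r, \dd y)$ admits the predictable compensator $\nu^X(\dd r, \dd y) = \lambda(X_{r^-})\,k(X_{r^-}, \dd y)\,\dd r$, by standard theory of pure jump Markov processes with bounded intensity. Because $X$ is piecewise constant between jumps, one has the pathwise identity
\begin{equation*}
f(X_t) - f(X_s) = \sum_{n:\,T_n\in(s,t]} \bigl(f(X_{T_n}) - f(X_{T_n^-})\bigr) = \int_{(s,t]\times\R^d} H(r,y)\,\mu^X(\dd r,\dd y),
\end{equation*}
where $H(r,y) := \ind_{(s,t]}(r)\bigl(f(y) - f(X_{r^-})\bigr)$ is predictable.

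Applying the compensation formula to the nonnegative process $|H|$, and using $X_{r^-} = X_r$ for Lebesgue-a.e. $r$ (only countably many jumps occur), Fubini gives
\begin{equation*}
\E\Bigl[\int |H|\,\dd\mu^X\Bigr] = \E\Bigl[\int |H|\,\dd\nu^X\Bigr] = \int_s^t\!\!\int_{\R^d\times\R^d}\lambda(x)|f(y)-f(x)|\,k(x,\dd y)\,P_r(\dd x)\,\dd r,
\end{equation*}
which is finite by hypothesis \eqref{eq:cond dynkin}. Hence $f(X_t) - f(X_s)$ is integrable, and re-applying the compensation formula to $H^{+}$ and $H^{-}$ separately yields
\begin{equation*}
\E[f(X_t) - f(X_s)] = \E\Bigl[\int H\,\dd\nu^X\Bigr] = \int_s^t \int_{\R^d} Lf(x)\,P_r(\dd x)\,\dd r,
\end{equation*}
which is exactly \eqref{eq:form dynk}.

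The main obstacle is a rigorous invocation of the compensation formula for the unbounded, possibly $\bar\R$-valued integrand $f$. A self-contained alternative proceeds by truncation: with $f_n := (-n) \vee f \wedge n$, both $f_n$ and $Lf_n$ are bounded, so \eqref{eq:form dynk} for $f_n$ follows directly from the martingale property of $f_n(X_\cdot) - f_n(X_0) - \int_0^\cdot Lf_n(X_r)\,\dd r$ (standard when $f_n$ and $\lambda$ are both bounded). The passage $n \to \infty$ is handled by dominated convergence: on the right, $|Lf_n(x)| \le \lambda(x)\int k(x,\dd y)|f(y)-f(x)|$ combined with \eqref{eq:cond dynkin} provides the dominant and $Lf_n(x) \to Lf(x)$ pointwise; on the left, the telescoping bound $|f_n(X_t) - f_n(X_s)| \le \sum_{T_i\in(s,t]}|f(X_{T_i})-f(X_{T_i^-})|$ together with the $L^1$ bound for the right-hand side (itself obtained by applying the bounded-case formula to the bounded truncations $|f(y)-f(x)|\wedge M$ and letting $M\to\infty$ by monotone convergence) supplies the dominant, and $f_n(X_u) \to f(X_u)$ holds on the full-measure event where $f(X_u)$ is finite.
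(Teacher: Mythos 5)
Your proof is correct and follows essentially the same route as the paper: both write $f(X_t)-f(X_s)$ as an integral of a predictable integrand against the jump measure of $X$ and invoke the compensation formula (martingale property of the compensated integral) under hypothesis \eqref{eq:cond dynkin}, identifying the compensator integral with $\int_s^t\int_{\R^d}Lf(x)P_r(\dd x)\dd r$. The only difference is presentational: the paper realizes the jump measure explicitly by thinning a Poisson random measure with marks in $[0,1]^2$ and cites Ikeda--Watanabe for the martingale property, whereas you work directly with the intrinsic jump measure and its compensator $\lambda(X_{r^-})k(X_{r^-},\dd y)\dd r$.
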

\begin{proof}[Proof of Lemma \ref{lemma:dynkin}]
 We can represent the process $\{X_t\}_{t\ge 0}$ as
 \begin{equation}\label{eq: Xt martingala}
  X_t=X_0 +\int_{(0,t]\times [0,1]\times [0,1]} \ind_{\{\lambda(X_{r-})\geq \bar\lambda u\}}\left(\varphi(X_{r-},v)-X_{r-}\right)
  N(\dd r, \dd u,\dd v)
 \end{equation}
where $N$ is a Poisson measure of intensity $\bar\lambda\dd r\dd u\dd v$ on $\R_+\times [0,1]\times[0,1]$ and for each $x\in\R^d$, the image of the uniform law on $[0,1]$ by $v\mapsto\varphi(x,v)$ is $k(x,.)$. 
 
 In particular we have for every measurable function $f$
 \begin{equation}\label{eq: fXt martingala}
  f(X_w)=f(X_s) +\int_{(s,w]\times [0,1]\times [0,1]} \ind_{\{\lambda(X_{r-})\geq \bar\lambda u\}}\left(f(\varphi(X_{r-},v))-f(X_{r-})\right)
  N(\dd r, \dd u,\dd v)
 \end{equation}
 and it is a well-known fact (see \cite{cf:IW} p62) that if 
 \begin{equation}\label{eq:condizione martingala f}
\E \left[  \int _{(s,t]\times [0,1]\times [0,1]} \ind_{\{\lambda(X_{r-})\geq \bar\lambda u\}}\left|f(\varphi(X_{r-},v))-f(X_{r-})\right| 
  \bar\lambda \, \dd r\, \dd u\, \dd v \right]\,< \infty
 \end{equation}
then the process
\begin{equation}\label{eq:def Mt}
 M_w=\int_{(s,w]\times [0,1]\times [0,1]} \ind_{\{\lambda(X_{r-})\geq \bar\lambda u\}}\left(f(\varphi(X_{r-},v))-f(X_{r-})\right)
  \left(N(\dd r, \dd u,\dd v)-\bar\lambda\,\dd r\,\dd u\,\dd v)\right)
\end{equation}
is a centered martingale on $(s,t]$ and in particular,
\begin{align*}
   \E[f(X_t)-f(X_s)]&=\E[M_t]+\int_s^t\E\left[\bar\lambda\int_0^1\int_0^1 \ind_{\{\lambda(X_{r})\geq \bar\lambda u\}}\left(f(\varphi(X_{r},v))-f(X_{r})\right)\dd u\,\dd v\right]\dd r\\
&=0+\int_s^t\E\left[\lambda(X_{r})\int_{\R^d}k(X_{r},\dd y)(f(y)-f(X_{r}))\right]\dd r
\\&=\int_s^t \int_{\R^d} Lf(x) P_r(\dd x)\dd r.
\end{align*}
We conclude the proof by remarking that, by a similar computation, the left-hand side of \eqref{eq:condizione martingala f} is equal to $\int_s^t \int_{\R^d\times\R^d}\lambda(x) |f(y)-f(x)|k(x,\dd y)P_r(\dd x)\dd r$. 
\end{proof}
We are now ready to prove the theorem by making the formal proof given in the introduction rigorous.
 \begin{proof}[Proof of Theorem \ref{teo:principale}]
Let $t>0$ and $h>0$. Since, by Proposition \ref{prop:mom}, $(\psi_t,\opsi_t)\in L^{1}(P_{t+h})\times L^1(\oP_{t+h})$, one has 
\begin{align*}
 \frac{1}{h}\left(W_\rho^{\rho}(P_{t+h},\oP_{t+h}) - W_\rho^{\rho}(P_t,\oP_t)\right) 
 \geq &-\frac 1h \int_{\R^d}\psi_t(x)(P_{t+h}(\dd x)-P_t(\dd x))\\&-\frac 1h \int_{\R^d}\opsi_t(x)(\oP_{t+h}(\dd x)-\oP_t(\dd x)),
 \end{align*}
In the same way, for $h<t$, $(\psi_t,\opsi_t)\in L^{1}(P_{t-h})\times L^1(\oP_{t-h})$ and
\begin{align*}\frac{1}{h}\left(W_\rho^{\rho}(P_{t},\oP_{t}) - W_\rho^{\rho}(P_{t-h},\oP_{t-h})\right) 
 \leq &-\frac 1h \int_{\R^d}\psi_t(x)(P_t(\dd x)-P_{t-h}(\dd x))\\&-\frac 1h \int_{\R^d}\opsi_t(x)(\oP_t(\dd x)-\oP_{t-h}(\dd x)).
\end{align*}
Since by Proposition \ref{prop:mom}, $r\mapsto \int_{\R^d}\lambda(x)|\psi_t(y)-\psi_t(x)|k(x,\dd y) P_t(\dd x)$ is locally bounded on $(0,+\infty)$, Lemma \ref{lemma:dynkin} permits to transform  the right-hand sides of the previous inequalities to obtain :
\begin{align*}
 \frac{1}{h}\left(W_\rho^{\rho}(P_{t+h},\oP_{t+h}) -W_\rho^{\rho}(P_t,\oP_t)\right) 
 \geq &-\frac 1h \int_{t}^{t+h}\left(\int_{\R^d}L\psi_t(x)P_r(\dd x)+\int_{\R^d}\oL\opsi_t(x)\oP_r(\dd x)\right)\dd r\\\frac{1}{h}\left(W_\rho^{\rho}(P_{t},\oP_{t}) - W_\rho^{\rho}(P_{t-h},\oP_{t-h})\right) 
 \leq &-\frac 1h \int_{t-h}^{t}\left(\int_{\R^d}L\psi_t(x)P_r(\dd x)+\int_{\R^d}\oL\opsi_t(x)\oP_r(\dd x)\right)\dd r.
\end{align*}
Since, again by Proposition \ref{prop:mom}, $(t,r)\mapsto \int_{\R^d}|L\psi_t(x)|^{1+\delta}P_r(\dd x)+\int_{\R^d}|\oL\opsi_t(x)|^{1+\delta}\oP_r(\dd x)$ is locally bounded on $(0,+\infty)\times (0,+\infty)$, one deduces that $s\mapsto W_\rho^{\rho}(P_s,\oP_s)$ is locally Lipschitz and therefore $\dd s$ a.e. differentiable on $(0,+\infty)$. If it is differentiable at $t$, the equality $$\frac{\dd }{\dd t} W_\rho^{\rho}(P_t,\oP_t)= - \int_{\R^d} L \psi_t(x) P_t(\dd x) -\int_{\R^d}\oL\, \opsi_t(x) \oP_t(\dd x)$$
is obtained by taking the limit $h\to 0$ in the above inequalities once we have checked that $r\mapsto\int_{\R^d}L\psi_t(x)P_r(\dd x)$ and $r\mapsto\int_{\R^d}\oL\opsi_t(x)\oP_r(\dd x)$ are continuous at $t$. It is enough to check continuity of the first function since the same arguments apply to the second one and we first deal with the right continuity.

For $r\ge t$ and $f:\R^d\to\R_+$ measurable, $$\E[f(X_r)\ind_{\{\forall s\in[t,r],X_s=X_t\}}]=\int_{\R^d}f(x)e^{-\lambda(x)(r-t)}P_t(\dd x).$$ Hence the measure $Q_{(t,r)}(\dd x)=P_r(\dd x)-e^{-\lambda(x)(r-t)}P_t(\dd x)$ is non-negative. Writing
\[
 \int_{\R^d}\!\!\! L\psi_t(x)P_r(\dd x)= \int_{\R^d}\!\!\! L\psi_t(x) 
 e^{-\lambda(x)(r-t)}P_t(\dd x)+ \int_{\R^d}\!\!\! L\psi_t(x)Q_{(t,r)}(\dd x)\,
\]
we observe that, by dominated convergence, the first integral in the right-hand side converges
to $\int_{\R^d} L\psi_t(x)P_t(\dd x)$ as $r\to t$. By H\"older's inequality and since $$Q_{(t,r)}(\R^d)= 1 -\int_{\R^d}e^{-\lambda (x)(r-t)}P_t(\dd x)\leq 1-e^{-\bar\lambda (r-t)},$$
\begin{align*}
   \int_{\R^d}\!\!\! |L\psi_t(x)|Q_{(t,r)}(\dd x)&\leq 
 \left(\int_{\R^d}\!\!\! |L\psi_t(x)|^{1+\delta}Q_{(t,r)}(\dd x)\right)^{\frac{1}{1+\delta}}
 \left(\int_{\R^d}Q_{(t,r)}(\dd x)\right)^{\frac{\delta}{1+\delta}}\\
&\leq \left(\int_{\R^d}\!\!\! |L\psi_t(x)|^{1+\delta}P_{r}(\dd x)\right)^{\frac{1}{1+\delta}}
 \left(1-e^{-\bar\lambda (r-t)}\right)^{\frac{\delta}{1+\delta}}.
\end{align*}
As $r\to t$, the first term is bounded, while the second tends to $0$. Therefore $$\lim_{r\to t^+}\int_{\R^d}L\psi_t(x)P_r(\dd x)=\int_{\R^d}L\psi_t(x)P_t(\dd x).$$ To check the left continuity, we now let $r\le t$ and write 
\[
 \int_{\R^d}\!\!\! L\psi_t(x)P_r(\dd x)= \int_{\R^d}\!\!\! L\psi_t(x) 
 e^{\lambda(x)(t-r)}P_t(\dd x)- \int_{\R^d}\!\!\! L\psi_t(x)e^{\lambda(x)(t-r)}Q_{(r,t)}(\dd x)\,
\]
where the first term in right-hand side converges to $\int_{\R^d}L\psi_t(x) 
 P_t(\dd x)$ as $r\to t$ by dominated convergence. Moreover
\begin{align*}
   \int_{\R^d}\!\!\! |L\psi_t(x)|e^{\lambda(x)(t-r)}Q_{(r,t)}(\dd x)&\leq 
 \left(\int_{\R^d}\!\!\! |L\psi_t(x)|^{1+\delta}e^{\lambda(x)(t-r)}P_t(\dd x)\right)^{\frac{1}{1+\delta}}
 \left(\int_{\R^d}e^{\bar\lambda (t-r)}Q_{(r,t)}(\dd x)\right)^{\frac{\delta}{1+\delta}}\\
&\leq \left(e^{\bar\lambda (t-r)}\int_{\R^d}\!\!\! |L\psi_t(x)|^{1+\delta}P_t(\dd x)\right)^{\frac{1}{1+\delta}}
 \left(e^{\bar\lambda (t-r)}-1\right)^{\frac{\delta}{1+\delta}}
\end{align*}
so that the left-hand side goes to $0$ as $r\to t$, which implies the desired left continuity property.

Since $r\mapsto W_\rho^\rho(P_r,\oP_r)$ is locally Lipschitz on $(0,+\infty)$ with $\dd r$ a.e., $\frac{\dd }{\dd r} W_\rho^{\rho}(P_r,\oP_r)= - \int_{\R^d} L \psi_r(x) P_r(\dd x) -\int_{\R^d}\oL\, \opsi_r(x) \oP_r(\dd x)\,$, 
\begin{equation}
   \forall 0<s\le t,\;W_\rho^{\rho}(P_t,\oP_t)= W_\rho^{\rho}(P_s,\oP_s)- 
   \int_s^t \left[ \int_{\R^d} L \psi_r(x) P_r(\dd x) +\int_{\R^d}\oL\, \opsi_r(x) \oP_r(\dd x )\right] \dd r\,.\label{evolst}
\end{equation}
By Proposition \ref{prop:mom}, $r\mapsto \int_{\R^d} |L \psi_r(x)|P_r(\dd x)+\int_{\R^d} |L \opsi_r(x)|\oP_r(\dd x)$ is locally integrable on $[0,\infty)$. Therefore, as $s\to 0$, the integral in the right-hand side of \eqref{evolst} converges to the integral from $0$ to $t$ by dominated convergence. Since, by the triangle inequality for the Wasserstein distance, $|W_\rho(P_s,\oP_s)-W_\rho(P_0,\oP_0)|\le W_\rho(P_s,P_0)+W_\rho(\oP_s,\oP_0)$, it is enough to check that $\lim_{s\to 0^+}W_\rho(P_s,P_0)=0$ to conclude that \eqref{eq: main result} holds. This follows from the inequalities \begin{align*}
   W^\rho_\rho(P_s,P_0)\le \E[|X_s-X_0|^\rho]&\le \left(2^{\rho(1+\varepsilon)-1}\E\left[|X_s|^{\rho(1+\varepsilon)}+|X_0|^{\rho(1+\varepsilon)}\right]\right)^{\frac{1}{1+\varepsilon}}\left({\mathbb P}(X_s\neq X_0)\right)^{\frac{\varepsilon}{1+\varepsilon}}\\
&\le 2^{\rho}\left(\sup_{r\in[0,s]}\E\left[|X_r|^{\rho(1+\varepsilon)}\right]\right)^{\frac{1}{1+\varepsilon}}\left( 1-e^{-\bar{\lambda} s}   \right)^{\frac{\varepsilon}{1+\varepsilon}}.
\end{align*}
 \end{proof}

\section{Application to birth and death processes}\label{sec:birth_and_death}

Let $P_0$ and $\tilde{P}_0$ be two probability measures on $\N$. In this section, we consider two pure jump  Markov processes $\{X_t\}_{t\ge 0}$ and $\{\oX\}_{t\ge 0}$ with respective initial marginal laws $P_0$ and $\tilde{P}_0$ and with the same generator
$$Lf(x)=\eta(x)(f(x+1)-f(x))+\nu(x)(f(x-1)-f(x)),\;x\in\N.$$
The functions $\eta,\nu:\N \rightarrow \R_+$ with $\nu(0)=0$ are respectively the birth and death rates.  Note that we are precisely in the framework of Section~\ref{sec:main} by setting $\lambda(x)=\eta(x)+\nu(x)$ and, when $\lambda(x)>0$, $k(x,\dd y)=\frac{\eta}{\lambda}(x)\delta_{x+1}(\dd y)+\frac{\nu}{\lambda}(x)\delta_{x-1}(\dd y)$, and we still denote by $\{P_t\}_{t\ge 0}$ and $\{\oP_t\}_{t\ge 0}$ the  marginal laws of the processes $\{X_t\}_{t\ge 0}$ and $\{\oX\}_{t\ge 0}$. We will assume in the whole section that
\begin{equation}\label{hyp_eta}\exists C>0, \forall x \in \N, \eta(x)\le C(1+x),
\end{equation}
which ensures that the processes are well-defined for any $t\ge0 $ by preventing accumulation of jumps. Of course this sufficient condition is not necessary and existence also holds under suitable balance conditions between the birth and death rates. But to obtain estimations of $W_\rho(P_t,\tilde{P}_t)$ for any $\rho\ge 1$ below, we will assume that $\eta$ is Lipschitz continuous so that the affine growth assumption \eqref{hyp_eta} is no longer restrictive. For $\rho\ge 1$, we have formally
\begin{align*}
   \frac{\dd }{\dd t}\E[X_t^\rho]&=\E\left[\eta(X_t)((X_t+1)^\rho-X_t^\rho)+\nu(X_t)(|X_t-1|^\rho-X_t^\rho)\right]\le C\E\left[(1+X_t)((X_t+1)^\rho-X_t^\rho)\right]\\&\le Cc_\rho (\E[X_t^\rho]+1),
\end{align*}
using \eqref{hyp_eta} and introducting the finite constant $c_\rho=\sup_{x\in\N}\frac{(1+x)[(1+x)^\rho-x^\rho]}{1+x^\rho}$. By a standard localization procedure, we deduce that
\begin{equation}
   \E[X_t^\rho]\le (\E[X_0^\rho]+1)e^{c_\rho Ct}-1.\label{estimom}
\end{equation}
We now define
$${\rm Lip}(\eta)=\sup_{x\in \N}|\eta(x+1)-\eta(x)|, \ {\rm Lip}(\nu)=\sup_{x\in \N}|\nu(x+1)-\nu(x)|,  $$
and $\kappa:=\inf_{x\in\N}(\eta(x)+\nu(x+1)-\eta(x+1)-\nu(x))$ the Wasserstein curvature introduced by \cite{cf:J07}.
\begin{proposition}\label{prop_bd}
  Let us assume that $\eta$ and $\nu$ are bounded functions. Let $\rho\ge 1$, $P_0$ and $\tilde{P}_0$ such that $\E[X_0^\rho+\oX_0^\rho]<\infty$. Then, there exists a constant $C_\rho\in[0,+\infty)$ such that for any $t\ge 0$,
 \begin{align}
   W_\rho^{\rho}(P_t,\oP_t)\le  W_\rho^{\rho}(P_0,\oP_0)e^{-\kappa\rho t}+C_\rho({\rm Lip}(\eta)+{\rm Lip}(\nu))\int_0^te^{\kappa\rho(r-t)}(W_1(P_r,\oP_r)+1_{\{\rho>2\}}W_{\rho-1}^{\rho-1}(P_r,\oP_r))\dd r.\label{estiint}
 \end{align}
 Besides, in the particular cases $\rho=1$ and $\rho\in(1,2]$, we can take $C_1=0$ and $C_\rho=1$. 
\end{proposition}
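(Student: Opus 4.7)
The plan is to apply Theorem~\ref{teo:principale} to express
$$\frac{d}{dt}W_\rho^\rho(P_t,\oP_t)=\int\bigl[-L\psi_t(x)-L\opsi_t(y)\bigr]\,d\pi_t(x,y)$$
for the optimal coupling $\pi_t$ (monotone in dimension one), then to derive a pointwise upper bound on the integrand, integrate, and conclude by Gronwall's inequality.

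For the pointwise bound, I would expand $-L\psi_t(x)-L\opsi_t(y)$ via the birth-death generator and use the Kantorovich dual equality $\psi_t(x)+\opsi_t(y)=-|x-y|^\rho$ on $\mathrm{supp}(\pi_t)$ together with the global inequality $\psi_t(x')+\opsi_t(y')\ge-|x'-y'|^\rho$. On support these give $\psi_t(x)-\psi_t(x\pm 1)\le|x\pm 1-y|^\rho-|x-y|^\rho$ and the analogous bounds for $\opsi_t$. Splitting each jump rate as $\eta(x)=\min(\eta(x),\eta(y))+(\eta(x)-\eta(y))^+$ (and similarly for $\nu$), the four synchronous contributions are non-positive by a second use of the duality inequality at $(x\pm 1,y\pm 1)$ (since $|x\pm 1-(y\pm 1)|=|x-y|$), leaving only the asynchronous excesses. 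For $y>x$ and $d=y-x\ge 1$, this yields
$$-L\psi_t(x)-L\opsi_t(y)\le\alpha(d)\bar B(x,y)-\beta(d)\bar A(x,y),$$
with $\alpha(d)=(d+1)^\rho-d^\rho$, $\beta(d)=d^\rho-(d-1)^\rho$, $\bar A=(\eta(x)-\eta(y))^++(\nu(y)-\nu(x))^+$, and $\bar B=(\eta(y)-\eta(x))^++(\nu(x)-\nu(y))^+$.

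Two crucial estimates then enter: the Lipschitz bound $\bar A+\bar B\le dL$ with $L={\rm Lip}(\eta)+{\rm Lip}(\nu)$, and the Wasserstein-curvature bound $\bar A-\bar B\ge d\kappa$ obtained by telescoping the definition of $\kappa$ over consecutive integers from $x$ to $y$. For $\kappa\ge 0$, maximising $\alpha\bar B-\beta\bar A$ under these constraints (an elementary linear programme on $(\bar A,\bar B)$) yields
$$\alpha\bar B-\beta\bar A\le\tfrac{\alpha-\beta}{2}\,dL-\tfrac{\alpha+\beta}{2}\,d\kappa.$$
This reduces to $-d\kappa$ for $\rho=1$ (whence $C_1=0$) and to exactly $dL-2d^2\kappa$ for $\rho=2$ (whence $C_2=1$). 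For general $\rho\ge 1$, Taylor/convexity expansions of $\alpha\pm\beta$ (namely $\alpha-\beta=O(d^{\rho-2})$ and $\alpha+\beta\ge 2\rho d^{\rho-1}$ for $\rho\ge 2$, with suitable analogues for $\rho\in(1,2]$) convert this into a pointwise bound of the form $C_\rho L(d+d^{\rho-1}\mathbf 1_{\{\rho>2\}})-\rho\kappa d^\rho$.

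Integrating against $\pi_t$, which in dimension one realises every $W_q$ simultaneously so that $\int|x-y|^q\,d\pi_t=W_q^q(P_t,\oP_t)$, gives the differential inequality
$$\frac{d}{dt}W_\rho^\rho(P_t,\oP_t)\le C_\rho L\bigl(W_1(P_t,\oP_t)+\mathbf 1_{\{\rho>2\}}W_{\rho-1}^{\rho-1}(P_t,\oP_t)\bigr)-\rho\kappa W_\rho^\rho(P_t,\oP_t),$$
and Gronwall's lemma yields~\eqref{estiint}. The main obstacle is securing the sharp constant $C_\rho=1$ on the full range $\rho\in(1,2]$: when $\rho\in(1,2)$, Jensen's inequality applied to the concave function $x\mapsto x^{\rho-1}$ gives $(\alpha+\beta)d/2\le\rho d^\rho$, which is the wrong direction to immediately produce the contraction rate $\rho\kappa$ from the linear-programming bound above. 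Recovering the sharp constant therefore requires a refinement, either by exploiting the strictly negative residual of the synchronous terms (a finer duality estimate when $(x\pm 1,y\pm 1)$ is off $\mathrm{supp}(\pi_t)$) or by a dedicated one-dimensional argument based on the structure of the monotone optimal coupling.
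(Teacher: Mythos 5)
Your overall strategy (the integral formula of Theorem~\ref{teo:principale} applied along the monotone coupling $\pi_r$, Kantorovich duality on and off the support of $\pi_r$, the synchronous/asynchronous splitting to kill the common part of the rates, then Gronwall) is the same skeleton as the paper's, and your treatment of the synchronous terms and of the case $\rho=1$ is sound. The genuine gap is where the curvature enters: you first bound the potential increments by the discrete increments $\alpha(d)=(d+1)^\rho-d^\rho$, $\beta(d)=d^\rho-(d-1)^\rho$ and only afterwards feed in $\bar A-\bar B\ge \kappa d$, so the best contraction your linear programme can produce is $-\frac{\alpha(d)+\beta(d)}{2}\kappa d$; for $\rho\in(1,2)$ one has $\frac{d}{2}\big((d+1)^\rho-(d-1)^\rho\big)\le \rho d^\rho$ (midpoint rule for the concave integrand $u\mapsto u^{\rho-1}$), which, as you yourself note, is the wrong direction when $\kappa>0$. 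Hence the proposal as written does not deliver the rate $e^{-\kappa\rho t}$ together with $C_\rho=1$ on $(1,2]$, which is part of the statement; moreover your LP step is only carried out for $\kappa\ge0$, whereas the proposition allows $\kappa<0$ (only boundedness of $\eta,\nu$ is assumed).

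The missing idea, which is exactly the refinement you anticipated, is to keep the first-order term exactly instead of absorbing it into the discrete increments: set $\alpha_r(x,y)=\psi_r(x)-\psi_r(x+1)+\rho(y-x)|x-y|^{\rho-2}$ and analogously $\tilde\alpha_r,\beta_r,\tilde\beta_r$. The integrand then splits into $\rho(\eta(x)+\nu(y)-\nu(x)-\eta(y))(x-y)|x-y|^{\rho-2}$, which the (telescoped) curvature definition bounds by $-\kappa\rho|x-y|^{\rho}$ exactly, for every $\rho\ge1$ and every sign of $\kappa$, plus the remainder $\eta(x)\alpha_r+\eta(y)\tilde\alpha_r+\nu(x)\beta_r+\nu(y)\tilde\beta_r$. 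Duality gives $\alpha_r+\tilde\alpha_r\le0$ and $\alpha_r\vee\tilde\alpha_r\le |z+1|^\rho-|z|^\rho-\rho z|z|^{\rho-2}$ at $z=\pm(x-y)$, and this elementary quantity is bounded by $C_\rho(1+1_{\{\rho>2\}}|x-y|^{\rho-2})$ with $C_\rho=1$ for $\rho\in(1,2]$ (subadditivity of $u\mapsto u^{\rho-1}$). Writing $\eta(x)\alpha_r+\eta(y)\tilde\alpha_r=(\eta(x)-\eta(y))\alpha_r+\eta(y)(\alpha_r+\tilde\alpha_r)$ then bounds the remainder by $C_\rho\,{\rm Lip}(\eta)(|x-y|+1_{\{\rho>2\}}|x-y|^{\rho-1})$, similarly for $\nu$, and Gronwall concludes; this removes both the $\kappa\ge0$ restriction and the obstruction on $(1,2)$, and it also yields $C_1=0$ since for $\rho=1$ the corrected increments satisfy $\eta(x)\alpha_r+\eta(y)\tilde\alpha_r\le0$.
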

\begin{proof}  
The initial marginals $P_0$ and $\tilde{P}_0$ are two probability measures on $\N$. For $U$ uniformly distributed on $[0,1]$ and $r\ge 0$, let $F_r(x)=P_r((-\infty,x])$, $\tilde{F}_r(x)=\tilde{P}_r((-\infty,x])$ and $\pi_r$ denote the law of $(F_r^{-1}(U),\tilde{F}_r^{-1}(U))$ which is an optimal coupling between $P_r$ and $\tilde{P}_r$. By a slight abuse of notation, for $x,y\in\N$, we will denote $\pi_r(x,y)$ in place of $\pi_r(\{(x,y)\})$.
Let $\rho\ge 1$. Since the jump rates are bounded, we can apply Theorem~\ref{teo:principale}, and get for $t\ge 0$,
\begin{align*}
   W_\rho^{\rho}(P_t,\oP_t)= W_\rho^{\rho}(P_0,\oP_0)+
   \int_0^t &\sum_{x,y\in\N}\pi_r(x,y)\bigg(\eta(x)(\psi_r(x)-\psi_r(x+1))+\nu(x)(\psi_r(x)-\psi_r(x-1))\\
&+\eta(y)(\tilde{\psi}_r(y)-\tilde{\psi}_r(y+1))+\nu(y)(\tilde{\psi}_r(y)-\tilde{\psi}_r(y-1))\bigg) \dd r
  \end{align*}Setting $\alpha_r(x,y)=\psi_r(x)-\psi_r(x+1)+\rho(y-x)|x-y|^{\rho-2}$, $\tilde{\alpha}_r(x,y)=\tilde{\psi}_r(y)-\tilde{\psi}_r(y+1)+\rho(x-y)|x-y|^{\rho-2}$, $\beta_r(x,y)=\psi_r(x)-\psi_r(x-1)+\rho(x-y)|x-y|^{\rho-2}$ and $\tilde{\beta}_r(x,y)=\tilde{\psi}_r(y)-\tilde{\psi}_r(y-1)+\rho(y-x)|x-y|^{\rho-2}$ (where by convention $z|z|^{\rho-2}=0$ when $z=0$), one deduces that
\begin{align}
   W_\rho^{\rho}(P_t,\oP_t)&= W_\rho^{\rho}(P_0,\oP_0)+
   \int_0^t \sum_{x,y\in\N}\pi_r(x,y)\bigg(\rho(\eta(x)+\nu(y)-\nu(x)-\eta(y))(x-y)|x-y|^{\rho-2}\notag\\
&+\eta(x)\alpha_r(x,y)+\eta(y)\tilde{\alpha}_r(x,y)+\nu(x)\beta_r(x,y)+\nu(y)\tilde{\beta}_r(x,y)\bigg) \dd r.\label{evowbd}
  \end{align}
From the definition of the Wasserstein curvature, one has $$\forall x,y\in\N,\;(\eta(x)+\nu(y)-\nu(x)-\eta(y))(x-y)\le -\kappa(x-y)^2.$$
Since $\pi_r$ is a coupling between $P_r$ and $\tilde{P}_r$, one deduces that
\begin{equation}
   \sum_{x,y\in\N}\pi_r(x,y)\rho(\eta(x)+\nu(y)-\nu(x)-\eta(y))(x-y)|x-y|^{\rho-2}\le-\kappa\rho W_\rho^{\rho}(P_r,\oP_r).\label{majowscv}
\end{equation}
Let $(x,y)\in\N^2$ be such that $\pi_r(x,y)>0$. By the optimality of the coupling $\pi_r$ and \eqref{optipi}, $-\psi_r(x)-\tilde{\psi}_r(y)=|x-y|^\rho$. Moreover, for all $(z,w)\in\N^2$, $-\psi_r(z)-\tilde{\psi}_r(w)\le |z-w|^\rho$. This yields
\begin{align*}
  &\alpha_r(x,y)+\rho(x-y)|x-y|^{\rho-2}=\psi_r(x)+\tilde{\psi}_r(y)-(\psi_r(x+1)+\tilde{\psi}_r(y))\le |x+1-y|^\rho-|x-y|^\rho,\\
&\tilde{\alpha}_r(x,y)+\rho(y-x)|x-y|^{\rho-2}=\psi_r(x)+\tilde{\psi}_r(y)-(\psi_r(x)+\tilde{\psi}_r(y+1))\le |y+1-x|^\rho-|x-y|^\rho,\\
&\alpha_r(x,y)+\tilde{\alpha}_r(x,y)=\psi_r(x)+\tilde{\psi}_r(y)-(\psi_r(x+1)+\tilde{\psi}_r(y+1))\le |x-y|^\rho-|x-y|^\rho=0.
\end{align*}
For $\rho>1$, $\exists C_\rho<\infty,\;\forall z\in{\mathbb Z},\;|z+1|^\rho-|z|^\rho-\rho z|z|^{\rho-2}\le C_\rho(1+1_{\{\rho>2\}}|z|^{\rho-2})$ (notice that $C_\rho$ can be chosen equal to $1$ when $\rho\in(1,2]$), so that
\begin{equation*}
   \alpha_r(x,y)\vee \tilde\alpha_r(x,y)\le C_\rho(1+1_{\{\rho>2\}}|x-y|^{\rho-2})\;\mbox{and}\;\alpha_r(x,y)+\tilde{\alpha}_r(x,y)\le 0.
\end{equation*}
As a consequence, we obtain
\begin{align}
   \eta(x)\alpha_r(x,y)+\eta(y)\tilde{\alpha}_r(x,y)&=(\eta(x)-\eta(y))\alpha_r(x,y)+\eta(y)(\alpha_r(x,y)+\tilde{\alpha}_r(x,y))\notag\\&\le |\eta(x)-\eta(y)|\times C_\rho(1+1_{\{\rho>2\}}|x-y|^{\rho-2})+0\notag\\&\le C_\rho{\rm Lip}(\eta)(|x-y|+1_{\{\rho>2\}}|x-y|^{\rho-1}).\label{boral}
\end{align}
For $\rho=1$, $|z+1|-|z|-\rho z|z|^{-1}=1_{\{z=0\}}$ so that
\begin{equation*}
   \alpha_r(x,y)\vee \tilde\alpha_r(x,y)\le 1_{\{x=y\}},\;\alpha_r(x,y)+\tilde{\alpha}_r(x,y)\le 0,\;\eta(x)\alpha_r(x,y)+\eta(y)\tilde{\alpha}_r(x,y)\le 0
\end{equation*}and \eqref{boral} holds with $C_1=0$.

One checks by a symmetric reasoning that, for $\rho \ge 1$,
\begin{align}
   \nu(x)\beta_r(x,y)+\nu(y)\tilde{\beta}_r(x,y)\le C_\rho{\rm Lip}(\nu)(|x-y|+1_{\{\rho>2\}}|x-y|^{\rho-1})\label{borbet}.
\end{align}
Plugging \eqref{majowscv}, \eqref{boral} and \eqref{borbet} into \eqref{evowbd}, one obtains
\begin{equation*}
   W_\rho^{\rho}(P_t,\oP_t)\le  W_\rho^{\rho}(P_0,\oP_0)+\int_0^tC_\rho({\rm Lip}(\eta)+{\rm Lip}(\nu))(W_1(P_r,\oP_r)+1_{\{\rho>2\}}W_{\rho-1}^{\rho-1}(P_r,\oP_r))-\kappa \rho W_\rho^{\rho}(P_r,\oP_r)\dd r,
\end{equation*}
and then~\eqref{estiint} by Gronwall's Lemma. 
\end{proof}

\begin{theorem}\label{thm_bd} We assume that~\eqref{hyp_eta} holds, $\kappa>-\infty$ and $\E[X_0^\rho+\tilde{X}_0^\rho]<\infty$ for some $\rho \ge 1$. Then, we have $W_1(P_t,\oP_t)\le  W_1(P_0,\oP_0)e^{-\kappa t}$ and, if ${\rm Lip}(\eta)+{\rm Lip}(\nu)<\infty$, then~\eqref{estiint} still holds. 
In particular, when $\rho\in (1,2]$,
\begin{equation}\label{estim_rho12}\forall t\ge 0,\;W_\rho^{\rho}(P_t,\oP_t)\le  W_\rho^{\rho}(P_0,\oP_0)e^{-\kappa\rho t}+({\rm Lip}(\eta)+{\rm Lip}(\nu))W_1(P_0,\oP_0)\frac{e^{-\kappa t}-e^{-\kappa \rho t}}{\kappa(\rho-1)}.
\end{equation}
\end{theorem}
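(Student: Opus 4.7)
The plan is to extend the bounded-rate result of Proposition~\ref{prop_bd} to the unbounded-rate case by a truncation-and-limit argument, and to derive the $W_1$ bound either as a byproduct or more directly via a monotone coupling.

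\textbf{Truncation and convergence of marginals.} For each $n\ge 1$, I would introduce bounded, nonnegative, Lipschitz rates $\eta_n,\nu_n$ that coincide with $\eta,\nu$ on $\{0,\dots,n\}$ and are extended past $n$ in such a way as to preserve the Wasserstein curvature on an increasing set while keeping ${\rm Lip}(\eta_n)\le{\rm Lip}(\eta)$ and ${\rm Lip}(\nu_n)\le{\rm Lip}(\nu)$. The naive choice $\eta_n(x)=\eta(x\wedge n)$, $\nu_n(x)=\nu(x\wedge n)$ already preserves the curvature when $\kappa\le 0$. When $\kappa>0$ one has to work harder: a workable extension is to keep $\eta_n$ constant past $n$, continue $\nu_n$ linearly with slope $\kappa$ up to some cutoff $N_n\to\infty$, and freeze both rates past $N_n$, so that the pointwise curvature is equal to $\kappa$ on $\{x\le N_n\}$ and merely vanishes beyond. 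Coupling $X$ and $X^n$ on a common Poisson random measure, one obtains $X^n_t=X_t$ up to the stopping time $\tau_n=\inf\{t:X_t\vee\oX_t>N_n\}$, which tends to $+\infty$ almost surely thanks to the moment estimate \eqref{estimom} and Lemma~\ref{lemma:momenti}. Combined with the uniform-in-$n$ $L^\rho$ bound these same results provide, this gives $W_\rho(P_t^n,P_t)\to 0$ and $W_1(P_r^n,\oP_r^n)\to W_1(P_r,\oP_r)$ (and similarly for $W_{\rho-1}$ when $\rho>2$) for every $r\ge 0$.

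\textbf{Applying Proposition~\ref{prop_bd} and the $W_1$ bound.} Applying Proposition~\ref{prop_bd} to the truncated processes yields the truncated analogue of \eqref{estiint}. The possible mismatch between the truncated curvature and $\kappa$ is handled by revisiting the derivation of \eqref{evowbd}--\eqref{majowscv}: only the pointwise curvature inequality is used under the mass of the optimal coupling $\pi_r$, so that the truncated version of \eqref{majowscv} becomes $-\kappa\rho W_\rho^\rho(P_r^n,\oP_r^n)+R_n(r)$, with an error $R_n(r)$ supported on $\{x\vee y>N_n\}$ and tending to $0$ by the moment and tightness bounds above. Dominated convergence then passes the refined inequality to the limit and yields \eqref{estiint}. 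Specialising to $\rho=1$, where $C_1=0$ so the Lipschitz hypothesis only enters through the truncation (and is automatic at the truncated level), gives the $W_1$ contraction $W_1(P_t,\oP_t)\le e^{-\kappa t}W_1(P_0,\oP_0)$. This bound can alternatively be derived directly, without going through the truncation, by running the two processes under the monotone coupling: for $f(x,y)=|x-y|$ and, say, $y>x$, the coupled generator $\mathcal L$ satisfies $\mathcal Lf(x,y)=(\eta(y)-\eta(x))+(\nu(x)-\nu(y))\le -\kappa(y-x)$ by a telescopic summation of $\eta(k)-\eta(k+1)+\nu(k+1)-\nu(k)\ge\kappa$, and Dynkin's formula applied on $[0,t\wedge\tau_n]$ followed by $n\to\infty$ concludes via Gronwall.

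\textbf{Sharp estimate for $\rho\in(1,2]$ and main obstacle.} The case $\rho\in(1,2]$ of \eqref{estim_rho12} follows from the first two statements by an elementary computation: inject $W_1(P_r,\oP_r)\le W_1(P_0,\oP_0)e^{-\kappa r}$ into \eqref{estiint} (where $C_\rho=1$ and $1_{\{\rho>2\}}=0$) and evaluate $\int_0^t e^{\kappa\rho(r-t)-\kappa r}\,\dd r=\frac{e^{-\kappa t}-e^{-\kappa\rho t}}{\kappa(\rho-1)}$. The main obstacle in the whole scheme is the truncation step when $\kappa>0$: keeping the rates bounded unavoidably destroys the pointwise curvature past some cutoff, so the extension must be designed carefully and the contribution of the ``bad'' region must be shown to vanish in the limit via the uniform moment controls. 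Once the truncated inequality with a vanishing error term is established, the rest is routine.
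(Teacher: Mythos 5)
Your overall strategy (truncate to bounded rates, apply Proposition~\ref{prop_bd}, pass to the limit via moment bounds and almost sure/weak convergence) is the same as the paper's, and both your treatment of the $W_1$ bound (through $C_1=0$, or directly by the common-jump monotone coupling with Dynkin's formula and Gronwall) and the final computation $\int_0^t e^{\kappa\rho(r-t)}e^{-\kappa r}\dd r=\frac{e^{-\kappa t}-e^{-\kappa\rho t}}{\kappa(\rho-1)}$ giving \eqref{estim_rho12} are fine. The gap sits exactly at the point you call the main obstacle, the case $\kappa>0$. Your claim that the curvature error $R_n(r)=\kappa\rho\sum_{x\vee y>N_n}\pi^n_r(x,y)|x-y|^\rho$ ``tends to $0$ by the moment and tightness bounds above'' does not follow from those bounds: uniform-in-$n$ boundedness of the $\rho$-moments of $X^n_r,\oX^n_r$ together with $\P(X^n_r\vee\oX^n_r>N_n)\to 0$ does not force the truncated $\rho$-moment $\E_{\pi^n_r}\bigl[|x-y|^\rho\ind_{\{x\vee y>N_n\}}\bigr]$ to vanish, since the whole $\rho$-moment mass of the family may concentrate in the tail region. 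What is needed is uniform integrability of order $\rho$ of $\{(X^n_r)^\rho+(\oX^n_r)^\rho\}_{n}$, and under the sole hypothesis $\E[X_0^\rho+\oX_0^\rho]<\infty$ this requires an additional argument (e.g.\ a de la Vall\'ee-Poussin function pushed through the Gronwall estimate leading to \eqref{estimom}, or a domination argument); H\"older is unavailable because no moment of order larger than $\rho$ is assumed. Two secondary points: your re-derivation of \eqref{evowbd} with the extra term $R_n$ rests on Theorem~\ref{teo:principale}, whose hypothesis (local boundedness of moments of order $\rho(1+\varepsilon)$) is not guaranteed for your truncated processes because you keep the original initial laws, so you should also cap $X_0,\oX_0$; and your $\tau_n$ should be the exit time from $\{0,\dots,n\}$, where the rates agree, not from $\{0,\dots,N_n\}$.

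Note that the paper's truncation removes the difficulty altogether: it replaces the birth rate by $\ind_{\{x<N\}}\eta(x)$, keeps $\nu$, and caps the initial condition at $N$, so the approximating processes live on the finite set $\{0,\dots,N\}$ (bounded rates, all moments finite), and since only the nonnegative quantity $\eta(N)$ is dropped from the curvature expression, the truncated curvature $\kappa^N$ satisfies $\kappa\le\kappa^N$ and $\kappa^N\to\kappa$. Hence Proposition~\ref{prop_bd} applies as stated, no curvature error term appears even when $\kappa>0$, and the passage to the limit in \eqref{estiint} only needs the weak convergence coming from $X^N_t=X_t$ on $\{t\le\tau^N\}$ with $\tau^N\to\infty$ a.s., combined with the uniform $\rho$-moment bounds. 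To complete your route you must either supply the uniform-integrability argument making $R_n\to 0$, or switch to a truncation of this type, which keeps the curvature by construction at the (harmless, for this theorem) price of cutting the birth rate at the top of the state space.
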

Notice that for $t\ge 0$, since $P_t$ and $\tilde{P}_t$ are supported in $\N$, $\rho\mapsto W_\rho^\rho(P_t,\tilde{P}_t)$ is non-decreasing. \begin{proof}
  For $N \in \N^*$, we consider the following approximation of the process $\{X_t\}_{t\ge 0}$ (resp. $\{\oX_t\}_{t\ge 0}$). We set $X^N_0=\min(X_0,N)$ and $X^N_t=X_t$ for $t\le \tau^N=\inf\{s \ge 0: X_s\ge N \}$ (resp. $\oX^N_0=\min(\oX_0,N)$ and $\oX^N_t=\oX_t$ for $t\le \tilde{\tau}^N=\inf\{s \ge 0:\oX_s\ge N \}$). Then, we sample $ \{X^N_t\}_{t\ge \tau^N}$ (resp. $\{\oX^N_t\}_{t\ge \tilde{\tau}^N}$) independently from $X$ (resp. $\oX$)  with the birth rate $\ind_{\{x< N\}}\eta(x)$ and death rate $\nu(x)$.
By construction,  $\{X^N_t\}_{t\ge 0}$ and $\{\oX^N_t\}_{t\ge 0}$ are Markov processes on $\{0,\dots,N\}$ with the same generator $\ind_{\{x< N\}}\eta(x)(f(x+1)-f(x))-\nu(x)(f(x-1)-f(x))$. From Proposition~\ref{prop_bd}, we get
\begin{align}
  W_\rho^{\rho}(P^N_t,\oP^N_t)\le & W_\rho^{\rho}(P^N_0,\oP^N_0)e^{-\kappa^N\rho t}\notag\\
  &+C_\rho({\rm Lip}(\eta)+{\rm Lip}(\nu))\int_0^te^{\kappa^N\rho(r-t)}(W_1(P^N_r,\oP^N_r)+1_{\{\rho>2\}}W_{\rho-1}^{\rho-1}(P^N_r,\oP^N_r))\dd r,\label{majokn}
\end{align}
with $\kappa^N=\inf_{x\in \{0,\dots,N-1\}}\eta(x)+\nu(x+1)-\eta(x+1)\ind_{\{x+1 \not= N\}}-\nu(x)$. We have for $N\ge 2$,
$$ \inf_{x\in \{0,\dots,N-1\}}\eta(x)+\nu(x+1)-\eta(x+1)-\nu(x) \le \kappa^N \le \inf_{x\in \{0,\dots,N-2\}}\eta(x)+\nu(x+1)-\eta(x+1)-\nu(x)$$
and thus $\kappa^N \rightarrow \kappa$ as $N\rightarrow + \infty$. It remains to show the convergence $W_{\rho'}(P^N_t,\oP^N_t)\underset{N\rightarrow + \infty}{\rightarrow} W_{\rho'}(P_t,\oP_t)$ for $\rho'\in [1,\rho]$ and the convergence of the integral in the right-hand side of \eqref{majokn}. Using Assumption~\eqref{hyp_eta} like in the derivation of \eqref{estimom}, we obtain bounds on the $\rho$-moments that are uniform in $N$ locally uniformly in time, so that by Theorem~6.9 of~\cite{cf:Villani} and Lebesgue's theorem, it is enough to show the weak convergence of $P^N_t$ to $P_t$ as $N\to\infty$. Since $X^N_t=X_t$ when $t\le \tau^N$ and $\tau^N\rightarrow +\infty$ a.s., we even have the stronger almost sure convergence of $X^N_t$ to $X_t$. 
\end{proof}
For $\rho=1$, Theorem~\ref{thm_bd} recovers the estimation $W_1(P_t,\oP_t)\le  W_1(P_0,\oP_0)e^{-\kappa t}$ obtained in \cite{cf:J07} by any trajectorial coupling between the processes with common jumps on the diagonal. The estimation \eqref{estim_rho12}  when $\rho\in (1,2]$ is new to our knowledge. By iterating the inductive inequality \eqref{estiint}, we can also get new upper bounds on $W_\rho^{\rho}(P_t,\oP_t)$ for any $\rho>2$. For example, in the case $\kappa>0$, we have $e^{-\kappa\rho t}\le e^{-\kappa t}$, and we get that  $W_\rho^{\rho}(P_t,\oP_t)\le A(\rho) e^{-kt}$, where, for $\rho>2$, $A(\rho)$ satisfies from~\eqref{estiint} the induction formula $A(\rho)\le  W_\rho^{\rho}(P_0,\oP_0)+ \frac{C_\rho L}{\kappa (\rho-1)}(W_1(P_0,\oP_0)+ A(\rho-1)) $ with $L={\rm Lip}(\eta)+{\rm Lip}(\nu)$. From~\eqref{estim_rho12}, we eventually obtain that for $\rho>2$, when $\E[X_0^\rho+\tilde{X}_0^\rho]<\infty$,
$$\forall t\ge 0,\;W_\rho^{\rho}(P_t,\oP_t)\le \left( \sum_{j=0}^{\lceil \rho - 2 \rceil} \pi_{j-1}^\rho W_{\rho-j}^{\rho-j}(P_0,\oP_0)+\left( \sum_{j=1}^{\lceil \rho -1 \rceil} \pi_{j-1}^\rho  \right) W_1(P_0,\oP_0) \right) e^{- \kappa t},$$
where $\pi_j^\rho=\prod_{i=0}^j \frac{C_{\rho-i}L}{\kappa(\rho-i-1)}$, $\pi_{-1}^\rho=1$ and for $x\in\R$, $\lceil x\rceil$ denotes the integer such that $x\le\lceil x\rceil<x+1$. In the particular case $P_0=\delta_x$ and $\oP_0=\delta_y$ with $x,y\in\N$ such that $x\not= y$, we deduce that $W_\rho^{\rho}(P_t,\oP_t) \le \left(1+2\sum_{j=1}^{\lceil \rho - 2 \rceil} \pi_{j-1}^\rho+\pi^\rho_{\lceil \rho - 2 \rceil}\right)|x-y|^\rho e^{- \kappa t}$, which is similar to~\eqref{wasscontrmod}.
\begin{remark} By integrating explicitly~\eqref{estiint}, we could get sharper bounds for $W_\rho^{\rho}(P_t,\oP_t)$ with tedious and cumbersome calculations. Besides, in the case $\rho \in \N^*$ with $\rho\ge 3$, we can get a better upper bound than~\eqref{estiint} by bounding $|z+1|^\rho-|z|^\rho-\rho z|z|^{\rho-2}$ from above by $\sum_{k=0}^{\rho -2}\binom{\rho}{k}|z|^k$ instead of $C_\rho(1+1_{\{\rho>2\}}|z|^{\rho-2})$. This would involve in the induction all the functions $r\mapsto W_k^{k}(P_r,\oP_r)$ for $k=1,\dots,\rho-1$. Note that for $\rho\in [1,2]$, we have used the sharpest bound for $|z+1|^\rho-|z|^\rho-\rho z|z|^{\rho-2}$. 
\end{remark}

\noindent {\bf Analogy with one dimensional diffusions with multiplicative noise.}\\

At least at a formal level, one can obtain very similar estimations for one dimensional diffusions processes with generator $Lf(x)=\frac{1}{2}a(x)f''(x)+b(x)f'(x)$ where $a:\R\to\R_+$ is supposed to be Lipchitz continuous and such that $\forall x,y\in\R, |\sqrt{a}(x)-\sqrt{a}(y)|\le \rho(|x-y|)$ for some function $\rho$ such that $\int_{0^+}\frac{dz}{\rho(z)}=\infty$ and $b:\R\to\R$ is locally bounded and satifies the monotonicity condition
\begin{equation}
   \forall x,y\in\R,\;{\rm sgn}(x-y)(b(x)-b(y))\le -\kappa|x-y|\mbox{ where }{\rm sgn}(z)=1_{\{z\ge 0\}}-1_{\{z<0\}}.\label{monot}
\end{equation}
Under the conditions on $a$, if $\dd X_t=\sqrt{a}(X_t)\dd W_t+b(X_t)\dd t$ and $\dd \tilde{X}_t=\sqrt{a}(\tilde{X}_t)\dd W_t+b(\tilde{X}_t)\dd t$ with $(W_t)_{t\ge 0}$ a Brownian motion independent from $(X_0,\tilde{X}_0)$ distributed according to the optimal coupling $\pi_0$, it is well-known that the local time at $0$ of $X_t-\tilde{X}_t$ vanishes (see for instance Corollary 3.4 p.~390, \cite{cf:RY}) so that, by the It\^o-Tanaka formula (see Theorem 1.2 p.~222, \cite{cf:RY}) and \eqref{monot},
$$\dd |X_t-\tilde{X}_t|={\rm sgn}(X_t-\tilde{X}_t)\dd (X_t-\tilde{X}_t)\le {\rm sgn}(X_t-\tilde{X}_t)(\sqrt{a}(X_t)-\sqrt{a}(\tilde{X}_t))\dd W_t-\kappa |X_t-\tilde{X}_t|\dd t.$$
When the expectation of the stochastic integral vanishes and $t\mapsto \E[|X_t-\tilde{X}_t|]$ is locally integrable (both properties implied by $\E[|X_0|+|\tilde{X}_0|]<\infty$ which ensures $t\mapsto \E[|X_t|+|\tilde{X}_t|]$ is locally bounded), one deduces that $\forall t\ge 0$, $W_1(P_t,\tilde{P}_t)\le e^{-\kappa t}W_1(P_0,\tilde{P}_0)$. Notice that for the stochastic differential equation where $\sqrt{a}(x)$ and $b(x)$ have been replaced by the continuous and bounded coefficients $\sqrt{a}(-m\vee x\wedge m)$ and $b(-m\vee x\wedge m)$, weak existence holds whereas trajectorial uniqueness is deduced from the It\^o-Tanaka formula so that existence of a unique strong solution follows by the Yamada-Watanabe theorem. With standard uniform in $m$ estimation of moments, this ensures existence and trajectorial uniqueness for the original stochastic differential equation.

For $\rho>0$, let us suppose that for all $r\ge 0$, the Kantorovich potential functions $\psi_r$ and $\tilde\psi_r$ are $C^2$ and that \eqref{evowass} holds. Then, this equation writes
\begin{align*}
  W_\rho^\rho(P_t,\tilde{P}_t)=&W_\rho^\rho(P_0,\tilde{P}_0)\\
  &-\int_0^t\left(\int_\R\frac{1}{2}a(x)\psi''_r(x)+b(x)\psi'_r(x)P_r(\dd x)+\int_{\mathbb{R}}\frac{1}{2}a(x)\tilde\psi''_r(x)+b(x)\tilde\psi'_r(x)\tilde P_r(\dd x)\right)\dd r.
\end{align*}
When $F_r$ and $\tilde{F}_r$ are continuous and increasing, one can consider the optimal transport map $T_r(x)={\tilde F}_r^{-1}(F_r(x))$ mentionned in the introduction and its inverse $\tilde{T}_r(x)={ F}_r^{-1}(\tilde F_r(x))$. The first and second order Euler optimality conditions in \eqref{optim1d} write
$$\psi'_r(x)=\rho (T_r(x)-x)|T_r(x)-x|^{\rho-2}\mbox{ and }\psi''_r(x)\ge \rho(\rho-1)|T_r(x)-x|^{\rho-2}(T_r'(x)-1).$$
By symmetry and since $\tilde{T}_r(T_r(x))=x$ which implies $\tilde{T}_r'(T_r(x))=\frac{1}{T'_r(x)}$,
$$\tilde{\psi}'_r(T_r(x))=\rho (x-T_r(x))|T_r(x)-x|^{\rho-2}\mbox{ and }\tilde{\psi}''_r(T_r(x))\ge\rho(\rho-1)|T_r(x)-x|^{\rho-2}\left(\frac{1}{T'_r(x)}-1\right).$$
Now, using that $\tilde{P}_r$ is the image of $P_r$ by $T_r$, then the first order optimality conditions and last \eqref{monot}, one obtains
\begin{align*}
   -\int_{\R}b(x)\psi'_r(x)P_r(\dd x)&-\int_{\R}b(x)\tilde\psi'_r(x)\tilde P_r(\dd x)=-\int_{\R}b(x)\psi'_r(x)+b(T_r(x))\tilde{\psi}'_r(T_r(x))P_r(\dd x)\\
&=\rho\int_{\R}(b(T_r(x))-b(x))(T_r(x)-x)|T_r(x)-x|^{\rho-2}P_r(\dd x)\\
&\le -\kappa \rho\int_{\R}|T_r(x)-x|^{\rho}P_r(\dd x)=-\kappa \rho W_\rho^\rho(P_r,\tilde{P}_r).
\end{align*}
On the other hand,  we observe that when $T_r'(x)\ge 1$,
\begin{align*}
  a(x)&(1-T'_r(x))+a(T_r(x))\left(1-\frac{1}{T'_r(x)}\right)=a(x)\left(2-T'_r(x)-\frac{1}{T'_r(x)}\right)+(a(T_r(x))-a(x))\left(1-\frac{1}{T'_r(x)}\right)\\&
\le -a(x)\left(\sqrt{T'_r(x)}-\frac{1}{\sqrt{T'_r(x)}}\right)^2+|a(T_r(x))-a(x)|\times \left|1-\frac{1}{T'_r(x)}\right|\le 0+{\rm Lip}(a)|T_r(x)-x|\times 1
\end{align*}
and the same estimation holds when $T_r'(x)\le 1$ by a symmetric reasoning. Therefore we have
\begin{align*}
   -\int_{\R}a(x)\psi''_r(x)P_r(\dd x)&-\int_{\R}a(x)\tilde\psi''_r(x)\tilde P_r(\dd x)\\&\le \rho(\rho-1)\int_{\R}|T_r(x)-x|^{\rho-2}\left(a(x)(1-T'_r(x))+a(T_r(x))\left(1-\frac{1}{T'_r(x)}\right)\right)P_r(\dd x)\\ &\le \rho(\rho-1){\rm Lip}(a)\int_{\R}|T_r(x)-x|^{\rho-1}P_r(\dd x).
\end{align*}
We conclude that, for $\rho\ge 2$,
$$\forall t\ge 0,\;W_\rho^\rho(P_t,\tilde{P}_t)\le W_\rho^\rho(P_0,\tilde{P}_0)-\int_0^t\frac{\rho(\rho-1)}{2}{\rm Lip}(a)W_{\rho-1}^{\rho-1}(P_r,\tilde{P}_r)-\kappa\rho W_\rho^\rho(P_r,\tilde{P}_r)\dd r.$$
In a future work, we plan to investigate whether the approximation by Euler-Maruyama schemes considered in \cite{cf:AJK} permits to justify rigorously this estimation.

\section{Integrability with respect to the marginals of a pure jump process with bounded intensity of jumps}
\label{sec:integra}
In the present section, we deal with the pure jump Markov process $\{X_t\}_{t\ge 0}$ with marginals $\{P_t\}_{t\ge 0}$ and infinitesimal generator $L$ given by \eqref{eq: infinitesimalgen} with $\sup_{x\in\R^d}\lambda(x)\le\bar{\lambda}<\infty$. Our final goal is to prove Proposition \ref{prop:mom}.
 
For every $t\geq 0$, the law of $X_t$ is given by
 \begin{equation}\label{eq:marginale}
  P_t(\dd x)=\sum_{n\ge 0}P_{n,t}(\dd x)
 \end{equation}
where  $P_{0,t}(\dd x_0)=e^{-\lambda(x_0) t}P_0(\dd x_0)$ and for $n\ge 1$, under the convention $t_0=0$,
\begin{equation}\label{eq:Pnt}
\begin{split}
 P_{n,t}(\dd x_n)=&\int_{0\le t_1\le t_2\le \hdots\le t_n\le t}\int_{(\R^d)^{n}}P_0(\dd x_0)
 \prod_{j=0}^{n-1}(\lambda(x_j)e^{-\lambda(x_j)(t_{j+1}-t_{j})}k(x_{j},\dd x_{j+1}))\\
 &\phantom{\int_{0\le t_1\le t_2\le \hdots\le t_n\le t}\int_{(\R^d)^{n-1}}p_0(x_0)
   \prod_{j=0}^{n-1}\lambda(x_j)e^{-\lambda(x_j)(t_{j+1}-t_{j})}
   }e^{-\lambda(x_n)(t-t_n)}\dd t_1\hdots \dd t_n\,.
 \end{split}
\end{equation}The measure $P_{n,t}$ has the following trajectorial interpretation : for any $f:\R^d\to\R_+$ measurable, 
$$\E[f(X_t)\ind_{\{\{X_s\}_{s\ge 0}\mbox{ undergoes $n$ jumps on }[0,t]\}}]=\int_{\R^d}f(x)P_{n,t}(\dd x).$$
The key reason why we can relate integrability with respect to $P_s$ and to $P_t$ for $s\neq t$ is that all the marginals $\{P_r\}_{r>0}$ are equivalent probability measures.
\begin{lemma}Let $t>0$ and  $s\ge 0$.
\begin{equation}
  \forall n\in\N,\;P_{n,s}(\dd x)\le e^{\bar{\lambda}(t-s)^+}\left(\frac{s}{t}\right)^nP_{n,t}(\dd x).\label{eq:relazione pns pnt}
\end{equation}
Moreover, for $f:\R^d\to\bar{\R}$ measurable and $\eta>0$ we have,\begin{equation}
   \int_{\R^d}|f(x)|P_s(\dd x)\leq \ind_{\{s\le t\}}e^{\bar\lambda (t-s)} \int_{\R^d}|f(x)|P_t(\dd x)+\ind_{\{s>t\}}e^{\frac{\eta \bar\lambda s }{1+\eta}(\frac{s}{t})^{\frac1\eta}
} \left(\int_{\R^d}|f(x)|^{1+\eta}P_t(\dd x)\right)^{\frac{1}{1+\eta}}.\label{eq:integptps}
  \end{equation}
\end{lemma}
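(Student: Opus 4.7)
I will prove the two assertions in sequence, and then derive \eqref{eq:integptps} from \eqref{eq:relazione pns pnt} by either a simple domination argument or a H\"older inequality depending on the sign of $t-s$.

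\textbf{Step 1: the pointwise comparison \eqref{eq:relazione pns pnt}.} In the integral representation \eqref{eq:Pnt} of $P_{n,s}$, I will perform the affine change of variables $t_i = (s/t)u_i$, so that the new simplex is $\{0\le u_1\le\dots\le u_n\le t\}$ and $\dd t_1\dots\dd t_n = (s/t)^n \dd u_1\dots\dd u_n$. Under this substitution, each exponent $-\lambda(x_j)(t_{j+1}-t_j)$ becomes $-(s/t)\lambda(x_j)(u_{j+1}-u_j)$, and similarly for the last factor $e^{-\lambda(x_n)(s-t_n)}$. Comparing with $P_{n,t}(\dd x_n)$ written in the variables $u_1,\dots,u_n$, the integrand of $P_{n,s}$ equals the integrand of $P_{n,t}$ multiplied by
\[
\exp\!\Bigl((1-s/t)\Bigl[\sum_{j=0}^{n-1}\lambda(x_j)(u_{j+1}-u_j)+\lambda(x_n)(t-u_n)\Bigr]\Bigr).
\]
The bracketed sum telescopes into a convex combination of the $\lambda(x_j)$'s times $t$ and is therefore bounded by $\bar\lambda t$. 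If $s\le t$ then $1-s/t\ge 0$ and the exponential is $\le e^{(1-s/t)\bar\lambda t}=e^{\bar\lambda(t-s)}$; if $s>t$ the exponential is $\le 1=e^{\bar\lambda(t-s)^+}$. This yields \eqref{eq:relazione pns pnt} in both cases.

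\textbf{Step 2: the case $s\le t$ of \eqref{eq:integptps}.} Using $(s/t)^n\le 1$ and summing \eqref{eq:relazione pns pnt} over $n\in\N$, one obtains
\[
P_s(\dd x)\le e^{\bar\lambda(t-s)}P_t(\dd x),
\]
and the desired bound follows immediately by integrating $|f|$.

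\textbf{Step 3: the case $s>t$ of \eqref{eq:integptps}.} Since the $P_{n,t}$ are all absolutely continuous with respect to $P_t=\sum_n P_{n,t}$, Step~1 shows that $P_s\ll P_t$ with density $\phi(x):=\dd P_s/\dd P_t(x)\le \sum_n (s/t)^n\psi_n(x)$ for $P_t$-a.e.\ $x$, where $\psi_n:=\dd P_{n,t}/\dd P_t$ satisfies $\sum_n\psi_n=1$ $P_t$-a.s. Applying H\"older's inequality with the conjugate exponents $1+\eta$ and $(1+\eta)/\eta$,
\[
\int_{\R^d}|f(x)|P_s(\dd x)=\int_{\R^d}|f(x)|\phi(x)P_t(\dd x)\le \Bigl(\int|f|^{1+\eta}\,\dd P_t\Bigr)^{\!\frac{1}{1+\eta}}\Bigl(\int \phi^{\frac{1+\eta}{\eta}}\dd P_t\Bigr)^{\!\frac{\eta}{1+\eta}}.
\]
Since $(1+\eta)/\eta\ge 1$ and $(\psi_n(x))_n$ is a probability on $\N$ (for $P_t$-a.e.\ $x$), Jensen's inequality gives
\[
\phi(x)^{(1+\eta)/\eta}\le \Bigl(\sum_n (s/t)^n\psi_n(x)\Bigr)^{\!(1+\eta)/\eta}\le \sum_n (s/t)^{n(1+\eta)/\eta}\psi_n(x).
\]
Integrating against $P_t$ and using the elementary bound $P_{n,t}(\R^d)\le (\bar\lambda t)^n/n!$ (obtained by bounding each $\lambda(x_j)\le \bar\lambda$ and each exponential by~$1$ in \eqref{eq:Pnt}, and computing the simplex volume $t^n/n!$), I get
\[
\int\phi^{(1+\eta)/\eta}\,\dd P_t\le \sum_{n\ge 0}\frac{(\bar\lambda t\,(s/t)^{(1+\eta)/\eta})^n}{n!}=\exp\!\Bigl(\bar\lambda s\,(s/t)^{1/\eta}\Bigr),
\]
using $t\cdot(s/t)^{(1+\eta)/\eta}=s\cdot(s/t)^{1/\eta}$. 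Raising to the power $\eta/(1+\eta)$ produces exactly the factor $e^{\eta\bar\lambda s(s/t)^{1/\eta}/(1+\eta)}$ appearing in \eqref{eq:integptps}.

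\textbf{Main obstacle.} Step~1 is essentially a bookkeeping computation; the only point that requires care is that the telescoping sum of the scaled exponents is bounded by $\bar\lambda t$. The genuine content of the lemma is Step~3: one must convert the pointwise bound on the Radon--Nikodym derivative $\phi$ into an integrated one, and the right choice is a H\"older inequality combined with Jensen applied \emph{inside} the power $(1+\eta)/\eta$ so that the coefficients $(s/t)^n$ get raised to a power strictly larger than one, producing a convergent series that sums to the exponential in the statement.
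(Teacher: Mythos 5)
Your proof is correct and follows essentially the same route as the paper: the scaling change of variables on the simplex together with the bound $\bar\lambda t$ on the telescoped exponent gives \eqref{eq:relazione pns pnt} (the paper performs the same substitution starting from $P_{n,t}$ instead of $P_{n,s}$ and notes the trivial case $s=0$ separately), and for $s>t$ your H\"older-plus-Jensen argument with $P_{n,t}(\R^d)\le (\bar\lambda t)^n/n!$ is just a repackaging of the paper's single H\"older inequality applied to $\sum_{n}(s/t)^n\int |f|\,\dd P_{n,t}$. Both computations produce exactly the factor $\exp\bigl(\tfrac{\eta\bar\lambda s}{1+\eta}(s/t)^{1/\eta}\bigr)$ of \eqref{eq:integptps}.
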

\begin{remark}
 This lemma ensures that if $s\leq t$ then $L^{\alpha}(P_t)\subset L^{\alpha}(P_s)$ for every $\alpha \geq 1$. 
  Such an inclusion may be strict.  If, for example, $\{X_r\}_{r\ge 0}$ is a Poisson process with positive parameter and $f(x)=\frac{\Gamma(x+1)}{t^x}$ 
  for some fixed $t>0$, then $f\in L^1(P_s)$ for $s< t$ but $f\notin L^1(P_s)$ for $s\geq t$. On the other hand, for $\eta>0$, we have the inclusion $L^{\alpha(1+\eta)}(P_t) \subset L^\alpha(P_s)$ for all $s,t>0$.
\end{remark}
\begin{proof}
Equation \eqref{eq:relazione pns pnt} clearly holds for $s=0$. For $s>0$, the change of variables $(s_1,s_2,\hdots,s_n)=\frac{s}{t}(t_1,t_2,\hdots,t_n)$ leads to
\begin{equation*}
\begin{split}
 P_{n,t}(\dd x_n)= &\left(\frac{t}{s}\right)^n\int_{0\le s_1\le s_2\le \hdots\le s_n\le s}\int_{(\R^d)^{n}}P_0(\dd x_0)
 \prod_{j=0}^{n-1}(\lambda(x_j)e^{-\frac{t}{s}\lambda(x_j)(s_{j+1}-s_{j})}k(x_{j},\dd x_{j+1}))\\
 &\phantom{\int_{0\le t_1\le t_2\le \hdots\le t_n\le t}\int_{(\R^d)^{n-1}}p_0(x_0)
   \prod_{j=0}^{n-1}\lambda(x_j)}e^{-\frac{t}{s}\lambda(x_n)(s-s_n)}\dd s_1\hdots \dd s_n,
 \end{split}
\end{equation*}
where, by convention, $s_0=0$.

Since for $0\le s_1\le s_2\le \hdots\le s_n\le s$, $$\prod_{j=0}^{n-1}e^{-(\frac{t}{s}-1)\lambda(x_j)(s_{j+1}-s_{j})}e^{-(\frac{t}{s}-1)\lambda(x_n)(s-s_n)}\ge e^{-\frac{(t-s)^+}{s}\bar{\lambda}\left(\sum_{j=0}^{n-1}(s_{j+1}-s_{j})+(s-s_n)\right)}=e^{-\bar{\lambda}(t-s)^+},$$
we deduce \eqref{eq:relazione pns pnt}. As a consequence,
$$\int_{\R^d}|f(x)|P_s(\dd x)=\sum_{n\geq 0}\int_{\R^d}|f(x)|P_{n,s}(\dd x)\leq e^{\bar{\lambda}(t-s)^+}\sum_{n\geq 0}\left(\frac{s}{t}\right)^n\int_{\R^d}|f(x)|P_{n,t}(\dd x)$$
and \eqref{eq:integptps} follows easily when $s\le t$. For $s\ge t$, applying H\"older's inequality in the right-hand side, we deduce that
 \begin{equation*}
  \begin{split}
   \int_{\R^d}|f(x)|\, P_s(\dd x)&\leq \left(\sum_{n\geq 0}\left(\frac{s}{t}\right)^{n\frac{1+\eta}{\eta}}
   \int_{\R^d}P_{n,t}(\dd x)\right)^{\frac{\eta}{1+\eta}}
   \left(\sum_{n\geq 0}\int_{\R^d}|f(x)|^{1+\eta}P_{n,t}(\dd x)\right)^{\frac{1}{1+\eta}}.\end{split}
 \end{equation*}
 We can conclude since 
$P_{n,t}(\R^d)\leq \bar\lambda^n \frac{t^n}{n!}$
and
$\sum_{n\geq 0}\left(\frac{s}{t}\right)^{n\frac{1+\eta}{\eta}}\bar\lambda ^n\frac{t^n}{n!}
 \leq \exp\left(\bar\lambda s \left(\frac{s}{t}\right)^{\frac{1}{\eta}}\right)
$
.\end{proof}

In order to prove Proposition \ref{prop:mom}, we have to deal with integrability with respect to the measure $\int_{\R^d}\lambda(x)k(x,\dd y)P_t(\dd x)$. To this aim, we introduce $Q_0(\dd x_0)=P_0(\dd x_0)$ and for $n\ge 1$, 
\begin{equation}\label{eq:Qn}
Q_n(\dd x_n)=\int_{\R^d}Q_{n-1}(\dd x_{n-1})\lambda(x_{n-1}) k(x_{n-1},\dd x_{n})=\int_{(\R^d)^{n}}P_0(\dd x_0)\prod_{j=0}^{n-1}\lambda(x_j) k(x_{j},\dd x_{j+1}). 
\end{equation}
Notice that one easily checks by induction on $n$ that for all $n\in\N$, $Q_n(\R^d)\le \bar{\lambda}^n$.
For all $t>0$, $P_t$ is equivalent to the measure $\sum_{n\in\N}\frac{Q_n}{n!}$.
Indeed, since in the definition \eqref{eq:Pnt} of $P_{n,t}$, $$e^{-\bar{\lambda} t}\le e^{-\lambda(x_n)(t-t_n)}\prod_{j=0}^{n-1}e^{-\lambda(x_j)(t_{j+1}-t_{j})}\le 1$$ and $\int_{0\le t_1\le t_2\le \hdots\le t_n\le t}dt_1\hdots dt_n=\frac{t^n}{n!}$, the first statement in the following lemma  holds. The lemma also relates integrability with respect to $P_t$ and to $\int_{\R^d}\lambda(x)k(x,\dd y)P_t(\dd x)$.

\begin{lemma}\label{remark: rapporto integrabilita}
 For every $n\geq 0$, for every $t\geq 0$ one has
\begin{equation}\label{eq: relazione qn pnt}
  e^{-\bar\lambda t}\frac{t^n}{n!}Q_n(\dd x)\leq P_{n,t}(\dd x)\leq\frac{t^n}{n!}Q_n(\dd x)\,.
\end{equation}
Moreover, for $f:\R^d\to\bar{\R}$ measurable, $t>0$ and any $\eta>0$, 
\begin{align}
   &\int_{\R^d\times \R^d}\lambda(x)|f(y)|k(x,\dd y)P_t(\dd x)
\leq C_{\eta}(t) 
\left(\frac{\sum_{n\ge 1}\int_{\R^d}|f(x)|^{1+\eta}P_{n,t}(\dd x)}{t}\right)^{\frac{1}{1+\eta}}\label{intptpkt}\\
&\mbox{ where }C_{\eta}(t)=e^{\bar\lambda t}\left(\frac{e^{\bar \lambda t \left(e^\frac{1+\eta}{e\eta}-1\right)}-e^{-\bar\lambda t}}{t}
\right)^{\frac{\eta}{1+\eta}}.\label{defceta}
\end{align}\end{lemma}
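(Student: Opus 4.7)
The plan proceeds as follows. \textbf{The first inequality} is obtained by direct inspection of~\eqref{eq:Pnt}. The exponents of all exponential factors in the integrand sum to $-\sum_{j=0}^{n-1}\lambda(x_j)(t_{j+1}-t_j)-\lambda(x_n)(t-t_n)$, which lies in $[-\bar\lambda t,0]$ since $\lambda\le\bar\lambda$ and $\sum_{j=0}^{n-1}(t_{j+1}-t_j)+(t-t_n)=t$. Therefore the product of exponentials is bracketed between $e^{-\bar\lambda t}$ and $1$, and after factoring out $\prod_{j=0}^{n-1}\lambda(x_j)k(x_j,\dd x_{j+1})$ and using $\int_{0\le t_1\le\cdots\le t_n\le t}\dd t_1\cdots\dd t_n=t^n/n!$, one recovers~\eqref{eq: relazione qn pnt}.

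\textbf{For the second inequality}, I will chain together several reductions. Decompose $P_t=\sum_{n\ge 0}P_{n,t}$, apply the upper bound in~\eqref{eq: relazione qn pnt}, and recognise the identity $\int_{\R^d}\lambda(x)k(x,\dd y)Q_n(\dd x)=Q_{n+1}(\dd y)$ coming from~\eqref{eq:Qn} to get
\[
\int_{\R^d\times\R^d}\lambda(x)|f(y)|k(x,\dd y)P_t(\dd x)\le\sum_{n\ge 0}\frac{t^n}{n!}\int_{\R^d}|f(y)|Q_{n+1}(\dd y).
\]
Reindexing with $m=n+1$ this equals $\tfrac{1}{t}\sum_{m\ge 1}\tfrac{mt^m}{m!}\int_{\R^d}|f|\dd Q_m$, and rewriting the lower bound in~\eqref{eq: relazione qn pnt} as $\tfrac{t^m}{m!}Q_m(\dd y)\le e^{\bar\lambda t}P_{m,t}(\dd y)$ brings us back to the marginals: the quantity is bounded by $\tfrac{e^{\bar\lambda t}}{t}\sum_{m\ge 1}m\int_{\R^d}|f|\dd P_{m,t}$. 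H\"older's inequality, applied with conjugate exponents $\tfrac{1+\eta}{\eta}$ and $1+\eta$ to the measure $\sum_{m\ge 1}\delta_m\otimes P_{m,t}$ on $\N^*\times\R^d$, then splits this into
\[
\frac{e^{\bar\lambda t}}{t}\left(\sum_{m\ge 1}m^{(1+\eta)/\eta}P_{m,t}(\R^d)\right)^{\eta/(1+\eta)}\left(\sum_{m\ge 1}\int_{\R^d}|f|^{1+\eta}\dd P_{m,t}\right)^{1/(1+\eta)}.
\]

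\textbf{The main obstacle} is bounding $S:=\sum_{m\ge 1}m^{(1+\eta)/\eta}P_{m,t}(\R^d)$ sharply enough to absorb into the $e^{-\bar\lambda t}$ hidden inside $C_\eta(t)$. The naive estimate $P_{m,t}(\R^d)\le(\bar\lambda t)^m/m!$ coming from~\eqref{eq: relazione qn pnt} is too loose, being off by a factor $e^{\bar\lambda t}$. The remedy is to read $S=\E[N_t^{(1+\eta)/\eta}\ind_{\{N_t\ge 1\}}]$, where $N_t$ is the number of jumps of $X$ in $[0,t]$, and to invoke the Poisson representation~\eqref{eq: Xt martingala}: it exhibits $N_t$ as a thinning of a Poisson random variable $\tilde N_t$ of parameter $\bar\lambda t$, so $N_t\le\tilde N_t$ almost surely, and monotonicity of $m\mapsto m^{(1+\eta)/\eta}\ind_{\{m\ge 1\}}$ on $\N$ gives $S\le e^{-\bar\lambda t}\sum_{m\ge 1}m^{(1+\eta)/\eta}(\bar\lambda t)^m/m!$. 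The elementary inequality $\ln m\le m/e$ (with equality at $m=e$) then yields $m^{(1+\eta)/\eta}\le e^{(1+\eta)m/(e\eta)}$, hence
\[
S\le e^{-\bar\lambda t}\sum_{m\ge 1}\frac{(e^{(1+\eta)/(e\eta)}\bar\lambda t)^m}{m!}=e^{\bar\lambda t(e^{(1+\eta)/(e\eta)}-1)}-e^{-\bar\lambda t}.
\]
Substituting back and rewriting $\tfrac{e^{\bar\lambda t}}{t}S^{\eta/(1+\eta)}=e^{\bar\lambda t}(S/t)^{\eta/(1+\eta)}\,t^{-1/(1+\eta)}$ recovers exactly $C_\eta(t)$ as defined in~\eqref{defceta}, completing the proof of~\eqref{intptpkt}.
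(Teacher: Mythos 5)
Your proof is correct, and the first half (bracketing the exponential factors in \eqref{eq:Pnt} between $e^{-\bar\lambda t}$ and $1$) is exactly the paper's argument. For \eqref{intptpkt} the skeleton also matches the paper — same reduction $\int\lambda(x)k(x,\dd y)P_{n,t}(\dd x)\le\frac{t^n}{n!}Q_{n+1}(\dd y)$, same reindexing, same H\"older exponents $\frac{1+\eta}{\eta},\,1+\eta$, same elementary bound $m^{(1+\eta)/\eta}\le e^{\frac{1+\eta}{e\eta}m}$ — but you handle the delicate normalization differently. You convert back to the marginals via the lower bound in \eqref{eq: relazione qn pnt} \emph{before} applying H\"older, which leaves you with the factor $S=\sum_{m\ge1}m^{(1+\eta)/\eta}P_{m,t}(\R^d)=\E\bigl[N_t^{(1+\eta)/\eta}\ind_{\{N_t\ge1\}}\bigr]$, and you then recover the crucial $e^{-\bar\lambda t}$ (which the crude bound $P_{m,t}(\R^d)\le(\bar\lambda t)^m/m!$ would lose) by a probabilistic thinning argument: under the representation \eqref{eq: Xt martingala} the jump count is a.s.\ dominated by a Poisson$(\bar\lambda t)$ variable, so $\E[g(N_t)]\le\E[g(\tilde N_t)]$ for nondecreasing $g$. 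This is valid — it uses the trajectorial interpretation $P_{m,t}(\R^d)=\P(N_t=m)$ stated after \eqref{eq:Pnt}, and the no-fake-jump condition $\lambda(x)k(x,\{x\})=0$ guarantees that accepted atoms are genuine jumps (note the domination holds only through this coupling, not termwise in $m$). The paper instead stays purely analytic: it keeps the integrals against $Q_n$, splits the weights $\bar\lambda^{\pm\frac{n\eta}{1+\eta}}\bigl(\frac{t^n}{n!}e^{-\bar\lambda t}\bigr)^{\cdot}$ inside H\"older so that the first factor is automatically the Poisson moment $\sum_n n^{(1+\eta)/\eta}\frac{(\bar\lambda t)^n}{n!}e^{-\bar\lambda t}$, and only afterwards uses Jensen with $Q_n(\R^d)\le\bar\lambda^n$ and the lower bound in \eqref{eq: relazione qn pnt} to return to $P_{n,t}$. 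Both routes give exactly the constant $C_\eta(t)$ of \eqref{defceta}; yours is more conceptual about where the factor $e^{-\bar\lambda t}$ comes from (Poisson domination of the jump count), while the paper's is self-contained at the level of the measures $Q_n$ and $P_{n,t}$ and needs no appeal to the pathwise construction.
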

\begin{remark}
 Notice that $\lim_{t\to 0} C_\eta(t)=\left(\bar\lambda e^{\frac{1+\eta}{e\eta}}\right)^{\frac{\eta}{1+\eta}}$. Moreover, if $0<t\le s$, by \eqref{eq:relazione pns pnt}, $P_{n,t}\le e^{\bar{\lambda}s}\left(\frac{t}{s}\right)^nP_{n,s}$ so that $\frac{1}{t}\sum_{n\ge 1}P_{n,t}\le \frac{e^{\bar{\lambda}s}}{s}\sum_{n\ge 1}\left(\frac{t}{s}\right)^{n-1}P_{n,s}\le\frac{e^{\bar{\lambda}s}}{s}P_s$. With \eqref{intptpkt} for $t>0$ and using $\int_{\R^d}k(x,\dd y)P_0(\dd x)=Q_1(\dd y)\le \frac{e^{\bar{\lambda} s}}{s}P_{1,s}(\dd y)$ for $t=0$, we deduce that if $f\in L^{1+\eta}(P_s)$ with $s,\eta>0$, then $t\mapsto \int_{\R^d\times \R^d}\lambda(x)|f(y)|k(x,\dd y)P_t(\dd x)$ is bounded on $[0,s]$.
\end{remark}
\begin{proof}
From \eqref{eq:Qn} and \eqref{eq: relazione qn pnt} we have
\begin{align*}
\int_{\R^d}\lambda(x_n)k(x_n,\dd x_{n+1})P_{n,t}(\dd x_n)\le \frac{t^n}{n!}\,Q_{n+1}(\dd x_{n+1})=\frac{n+1}{t}\frac{t^{n+1}}{(n+1)!}\,Q_{n+1}(\dd x_{n+1}).
\end{align*}
Therefore
\begin{equation}\label{eq: da k in poi}
\begin{split}
\int_{(\R^d)^2}&\lambda(x)|f(y)|k(x,\dd y)P_{t}(\dd x)=\sum_{n\ge 0}\int_{(\R^d)^2}\lambda(x_n)|f(x_{n+1})|k(x_n,\dd x_{n+1})P_{n,t}(\dd x_n)\\
 &\leq\sum_{n\geq 0} \frac{n+1}{t}\frac{t^{n+1}}{(n+1)!}\int_{\R^d}|f(x_{n+1})|Q_{n+1}(\dd x_{n+1})\\
 &=\frac{e^{\bar\lambda t}}{t}\sum_{n\geq 1}\frac{n \,t^n}{n!} e^{-\bar\lambda t}\int_{\R^d}|f(x_{n})|Q_n(\dd x_n)\\
 &=\frac{e^{\bar\lambda t}}{t}\sum_{n\geq 1} \bar \lambda^{\frac{n\eta}{1+\eta}} n 
 \left(\frac{t^n}{n!}e^{-\bar\lambda t}\right)^{\frac{\eta}{1+\eta}}  
 \bar \lambda^{-\frac{n\eta}{1+\eta}}
 \left(\frac{t^n}{n!} e^{-\bar\lambda t}\right)^{\frac{1}{1+\eta}} \int_{\R^d}|f(x_{n})|Q_n(\dd x_n)\\
&\leq \frac{e^{\bar\lambda t}}{t}
\left(\sum_{n\geq 1}\bar \lambda^n n^{\frac{1+\eta}{\eta}} \frac{t^n}{n!} e^{-\bar\lambda t}\right)^{\frac{\eta}{1+\eta}} 
\left(\sum_{n\geq 1} \bar \lambda^{-n\eta}\frac{t^n}{n!} e^{-\bar\lambda t} 
\left(\int_{\R^d}|f(x_{n})|Q_n(\dd x_n)\right)^{1+\eta} \right)^{\frac{1}{1+\eta}}\,,
\end{split}
\end{equation}
where we used H\"older's inequality for the second inequality.

We deduce from H\"older's inequality and the bound $Q_n(\R^d)\le \bar{\lambda}^n$ that each term in the last sum can be bounded from above by
$\frac{t^n}{n!} e^{-\bar\lambda t} \int_{\R^d}|f(x_{n})|^{1+\eta}Q_n(\dd x_n)$ 
so that, by \eqref{eq: relazione qn pnt},
\begin{equation}\label{eq: primo termine aurelien}
 \sum_{n\geq 1} \bar \lambda^{-n\eta}\frac{t^n}{n!} e^{-\bar\lambda t} 
\left(\int_{\R^d}|f(x_{n})|Q_n(\dd x_n)\right)^{1+\eta} \leq
\sum_{n\ge 1}\int_{\R^d}|f(x)|^{1+\eta}P_{n,t}(\dd x).
\end{equation}
Moreover, since for $\alpha\geq 0$ and $x>0$,  $x^\alpha\leq e^{\frac{\alpha x}{e}}$, we have, for every $n\geq 1$
\begin{equation}\label{eq: stima interessante n}
 n^{\frac{1+\eta}{\eta}}\leq C(\eta)^n\,, \quad \text{with}\quad C(\eta)=e^{\frac{1+\eta}{e\eta}}\,.
\end{equation}
We obtain, consequently, that
\begin{equation}\label{eq: secondo termine aurelien}
 \sum_{n\geq 1}\bar \lambda^n n^{\frac{1+\eta}{\eta}} \frac{t^n}{n!} e^{-\bar\lambda t}
 \leq \sum_{n\geq 1} \frac{(\bar \lambda t C(\eta))^n}{n!} e^{-\bar\lambda t}
 =e^{\bar \lambda t (C(\eta)-1)}-e^{-\bar\lambda t}\, .
\end{equation}
We obtain the statement by plugging \eqref{eq: primo termine aurelien} and \eqref{eq: secondo termine aurelien} 
into \eqref{eq: da k in poi}.
\end{proof}
We are now ready to prove Proposition \ref{prop:mom}.
\begin{proof}[Proof of  Proposition \ref{prop:mom}]
By Proposition \ref{prop:1epsilon}, for $t\ge 0$, $\psi_t\in L^{1+\varepsilon}(P_t)$ and
\begin{equation}
   \int_{\R^d}|\psi_t(x)-P_t(\psi_t)|^{1+\varepsilon}P_t(\dd x)\le 2^{\rho(1+\varepsilon)}\E\left[|X_t|^{\rho(1+\varepsilon)}+|\oX_t|^{\rho(1+\varepsilon)}\right],\label{eq:integpsitpt}
\end{equation}
where the right-hand side is assumed to be locally bounded for $t\in[0,+\infty)$. Applying \eqref{eq:integptps} with $f(x)=|\psi_t(x)-P_t(\psi_t)|^{1+\delta}$ and $\eta=\frac{\varepsilon-\delta}{1+\delta}$, we deduce the first statement.

For $r,t>0$, using H\"older's inequality, then a convexity inequality, we obtain
\begin{align}
   &\int_{\R^d}\left(\int_{\R^d}\lambda(x) k(x,\dd y)|\psi_t(y)-\psi_t(x)|\right)^{1+\delta}P_r(\dd x)\notag\\
&\le \bar{\lambda}^\delta\int_{\R^d\times\R^d}\lambda(x) k(x,\dd y)|\psi_t(y)-P_t(\psi_t)+P_t(\psi_t)-\psi_t(x)|^{1+\delta}P_r(\dd x)\notag\\
&\le (2\bar{\lambda})^\delta\left(\int_{\R^d\times\R^d}\lambda(x) k(x,\dd y)|\psi_t(y)-P_t(\psi_t)|^{1+\delta}P_r(\dd x)+\bar{\lambda}\int_{\R^d}|\psi_t(x)-P_t(\psi_t)|^{1+\delta}P_r(\dd x)\right)\label{intkptpr}
\end{align}
By \eqref{intptpkt} with $\eta=\frac{\varepsilon-\delta}{2(1+\delta)}$ and \eqref{eq:marginale}, $$\int_{\R^d\times\R^d}\lambda(x) k(x,\dd y)|\psi_t(y)-P_t(\psi_t)|^{1+\delta}P_r(\dd x)\le C_{\frac{\varepsilon-\delta}{2(1+\delta)}}(r)\left(\frac{1}{r}\int_{\R^d}|\psi_t(x)-P_t(\psi_t)|^{1+\frac{\delta+\varepsilon}{2}}P_r(\dd x)\right)^{\frac{2(1+\delta)}{2+\delta+\varepsilon}}$$
where the right-hand side is smaller than a function of $(r,t)$ locally bounded on $(0,+\infty)^2$ multiplied by $\left(\int_{\R^d}|\psi_t(x)-P_t(\psi_t)|^{1+\varepsilon}P_t(\dd x)\right)^{\frac{1+\delta}{1+\varepsilon}}$ according to \eqref{eq:integptps} with $\eta=\frac{\varepsilon-\delta}{2+\varepsilon+\delta}$ (and Jensen's inequality for $r\le t$). By \eqref{eq:integptps} with $\eta=\frac{\varepsilon-\delta}{1+\delta}$, $\int_{\R^d}|\psi_t(x)-P_t(\psi_t)|^{1+\delta}P_r(\dd x)$ is also smaller than $\left(\int_{\R^d}|\psi_t(x)-P_t(\psi_t)|^{1+\varepsilon}P_t(\dd x)\right)^{\frac{1+\delta}{1+\varepsilon}}$ multiplied by a function of $(r,t)$ locally bounded on $(0,+\infty)^2$. Plugging these bounds in \eqref{intkptpr} and using \eqref{eq:integpsitpt}, we conclude that $$(r,t)\mapsto \int_{\R^d}\left(\int_{\R^d}\lambda(x) k(x,\dd y)|\psi_t(y)-\psi_t(x)|\right)^{1+\delta}P_r(\dd x)$$ is locally bounded on $(0,+\infty)^2$.

Starting from \eqref{intkptpr} with $\delta=0$ and $r=t$, using  \eqref{intptpkt} with $\eta=\varepsilon$ and \eqref{eq:integpsitpt}, we obtain that
\begin{align*}
   \int_{\R^d\times \R^d}&\lambda(x) k(x,\dd y)|\psi_t(y)-\psi_t(x)|P_t(\dd x)\le 2^{\rho}\left(C_\varepsilon(t)t^{-\frac{1}{1+\varepsilon}}+\bar{\lambda}\right)\E^{\frac{1}{1+\varepsilon}}\left[|X_t|^{\rho(1+\varepsilon)}+|\oX_t|^{\rho(1+\varepsilon)}\right].
\end{align*}Since $t\mapsto C_\varepsilon(t)$ and $t\mapsto\E\left[|X_t|^{\rho(1+\varepsilon)}+|\oX_t|^{\rho(1+\varepsilon)}\right]$ are locally bounded on $[0,+\infty)$ and $t\mapsto t^{-\frac{1}{1+\varepsilon}}$ is locally integrable, we conclude that $t\mapsto \int_{\R^d\times \R^d}\lambda(x) k(x,\dd y)|\psi_t(y)-\psi_t(x)|P_t(\dd x)$ is locally integrable on $[0,+\infty)$.
\end{proof}

 We now give a sufficient condition for $t\mapsto \E[|X_t|^\alpha]$ to be locally bounded.

\begin{lemma}\label{lemma:momenti}
 Assume that $\sup_{x\in\R^d}\lambda(x)\le\bar{\lambda}<\infty$ and let $\alpha\ge 1$. If 
 \begin{equation}\label{eq:lemmaMomenti}
\bar k_{\alpha}^X=\max\left(\E[|X_0|^{\alpha}], \sup_x \int_{\R^d}k(x,\dd y) |y-x|^{\alpha} \right)<+\infty\,,  
 \end{equation}
then for every $t\geq 0$
\begin{equation}\label{eq: stimaMomenti}
\int_{\R^d}|x|^{\alpha}P_t(\dd x)\leq  \bar k_{\alpha}^X \left(
\sum_{n=0}^{\lceil\alpha \rceil-1}\frac{(n+1)^{\alpha}}{n!}(\bar\lambda t)^n+\frac{(\lceil\alpha\rceil+1)^{\alpha}}{\lceil\alpha\rceil!}
(\bar\lambda t)^{\lceil\alpha\rceil}e^{\bar\lambda t}\right),
\end{equation}
where $\lceil \alpha\rceil$ denotes the integer such that $\alpha\le \lceil \alpha\rceil<\alpha+1$.
\end{lemma}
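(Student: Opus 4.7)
The plan is to exploit the decomposition $P_t=\sum_{n\ge 0}P_{n,t}$ from \eqref{eq:marginale}--\eqref{eq:Pnt} and to control the contribution of each $P_{n,t}$ separately. Since $\lambda(x_j)e^{-\lambda(x_j)(t_{j+1}-t_j)}\le \bar\lambda$ and $e^{-\lambda(x_n)(t-t_n)}\le 1$, the time integral in \eqref{eq:Pnt} over the simplex $\{0\le t_1\le\cdots\le t_n\le t\}$ of volume $t^n/n!$ yields
$$P_{n,t}(\dd x_n)\le \frac{(\bar\lambda t)^n}{n!}\int_{(\R^d)^{n}}P_0(\dd x_0)\prod_{j=0}^{n-1}k(x_j,\dd x_{j+1}).$$

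Next, the triangle inequality $|x_n|\le |x_0|+\sum_{j=0}^{n-1}|x_{j+1}-x_j|$ together with the convexity of $r\mapsto r^\alpha$ on $\R_+$ (recall $\alpha\ge 1$) gives $|x_n|^\alpha\le (n+1)^{\alpha-1}\bigl(|x_0|^\alpha+\sum_{j=0}^{n-1}|x_{j+1}-x_j|^\alpha\bigr)$. Integrating against $P_0(\dd x_0)\prod_{j=0}^{n-1} k(x_j,\dd x_{j+1})$ and using \eqref{eq:lemmaMomenti} to dominate each of the $n+1$ nonnegative terms by $\bar k_\alpha^X$, one obtains $\int|x_n|^\alpha P_{n,t}(\dd x_n)\le \bar k_\alpha^X(n+1)^\alpha(\bar\lambda t)^n/n!$, and summing over $n$
$$\int_{\R^d}|x|^\alpha P_t(\dd x)\le \bar k_\alpha^X\sum_{n=0}^{\infty}\frac{(n+1)^\alpha(\bar\lambda t)^n}{n!}.$$

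It remains to bound this series by the bracketed expression in \eqref{eq: stimaMomenti}. Setting $k=\lceil\alpha\rceil$, the first $k$ terms of the series reproduce the polynomial part of \eqref{eq: stimaMomenti} exactly, so the outstanding task is
$$\sum_{n\ge k}\frac{(n+1)^\alpha(\bar\lambda t)^n}{n!}\le \frac{(k+1)^\alpha(\bar\lambda t)^k}{k!}\,e^{\bar\lambda t}.$$
Since $\alpha-k\le 0$ and $n+1\ge k+1$ for $n\ge k$, the pointwise bound $(n+1)^\alpha\le (k+1)^{\alpha-k}(n+1)^k$ reduces the problem to the case $\alpha=k$. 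Writing $n=k+j$ and expanding $e^{\bar\lambda t}$ as a power series, a term-by-term comparison of the coefficients of $(\bar\lambda t)^{k+j}$ shows that the inequality follows from the combinatorial estimate $\binom{k+j}{j}\ge\bigl((k+j+1)/(k+1)\bigr)^k$ for every $j\ge 0$. This last step is the only non-routine point in the argument; I would prove it by induction on $j$, the induction step reducing to $(1+1/(k+j+1))^k\le (k+j+1)/(j+1)$, which follows from $(1+x)^k\le e^{kx}$ combined with the standard bound $\log(1+u)\ge u/(1+u)$ applied to $u=k/(j+1)$.
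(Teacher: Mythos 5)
Your argument is correct and, in its main probabilistic part, is exactly the paper's: the same decomposition $P_t=\sum_{n\ge 0}P_{n,t}$, the same bound $P_{n,t}\le \frac{(\bar\lambda t)^n}{n!}P_0\prod_j k(x_j,\dd x_{j+1})$, and the same triangle-inequality/convexity step giving $\int|x|^\alpha P_{n,t}(\dd x)\le \bar k_\alpha^X (n+1)^\alpha\frac{(\bar\lambda t)^n}{n!}$. Where you diverge is the treatment of the tail of the series $\sum_n\frac{(n+1)^\alpha}{n!}(\bar\lambda t)^n$. The paper simply bounds, for $n\ge\lceil\alpha\rceil$, $\frac{(n+1)^\alpha}{n!}\le\frac{(n+1)^{\lceil\alpha\rceil}}{n(n-1)\cdots(n-\lceil\alpha\rceil+1)}\cdot\frac{1}{(n-\lceil\alpha\rceil)!}\le\frac{(\lceil\alpha\rceil+1)^{\lceil\alpha\rceil}}{\lceil\alpha\rceil!\,(n-\lceil\alpha\rceil)!}$, which is shorter but delivers the constant $(\lceil\alpha\rceil+1)^{\lceil\alpha\rceil}$ rather than the $(\lceil\alpha\rceil+1)^{\alpha}$ written in the statement (a harmless discrepancy when $\alpha\notin\N$, since the stated right-hand side is then slightly smaller than what that display proves). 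Your term-by-term comparison, reducing to $\binom{k+j}{j}\ge\bigl(\frac{k+j+1}{k+1}\bigr)^k$ with $k=\lceil\alpha\rceil$, is sharper and yields the constant exactly as stated; your induction on $j$ works, since the induction step $(1+\frac{1}{k+j+1})^k\le\frac{k+j+1}{j+1}$ does follow from $(1+x)^k\le e^{kx}$ and $\log(1+u)\ge\frac{u}{1+u}$ with $u=\frac{k}{j+1}$. Note, incidentally, that the combinatorial estimate has a one-line direct proof: $\binom{k+j}{j}=\prod_{i=1}^{k}\frac{j+i}{i}\ge\prod_{i=1}^{k}\bigl(1+\tfrac{j}{k+1}\bigr)=\bigl(\tfrac{k+j+1}{k+1}\bigr)^k$, since $\frac{j}{i}\ge\frac{j}{k+1}$ for $i\le k$; this would let you skip the induction entirely.
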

\begin{proof}
 We observe that, for every $n\geq 0$
 \begin{equation}
 \begin{split}
 \int_{\R^d}|x_n|^{\alpha}P_{n,t}(\dd x_n) &\leq 
 \int_{0=t_0<t_1<\cdots<t_n<t}\int_{\R^d}
(n+1)^{\alpha-1} \left(|x_0|^{\alpha}+ |x_1-x_0|^{\alpha}+\cdots +|x_n-x_{n-1}|^{\alpha}\right) \\
 &\phantom{12345}P_0(\dd x_0)\prod_{j=0}^{n-1}\left(\lambda(x_j)e^{-\lambda(x_j)(t_{j+1}-t_j)}k(x_j,\dd x_{j+1})\right)e^{-\lambda(x_n)(t-t_n)}
 \dd t_0\cdots\dd t_n\\
 &\leq (n+1)^{\alpha-1}\frac{(\bar\lambda t)^n}{n!}(n+1)\bar k_{\alpha}^X
 =(\bar\lambda t)^n\frac{(n+1)^{\alpha}}{n!}\bar k_{\alpha}^X\,.
 \end{split}
  \end{equation}
  
 We now observe that, for $\alpha \geq 1$, if $n\geq \lceil\alpha\rceil$
 \begin{equation*}
  \frac{(n+1)^{\alpha}}{n!}\leq \frac{(n+1)^{\lceil\alpha\rceil}}{n(n-1)\ldots (n- \lceil\alpha\rceil +1)}\times \frac{1}{(n- \lceil\alpha\rceil)!}
  \leq 
  \frac{(\lceil\alpha\rceil+1)^{\lceil\alpha\rceil}}{\lceil\alpha\rceil!}\times \frac{1}{(n- \lceil\alpha\rceil)!}\,. 
\end{equation*}
Hence 
\begin{align*}
  \E[|X_t|^\alpha]&=\sum_{n\in\N}\int_{\R^d}|x_n|^\alpha P_{n,t}(\dd x_n)\\&\le \bar{k}^X_\alpha\left(\sum_{n=0}^{\lceil \alpha\rceil -1}\frac{(n+1)^{\alpha}}{n!}(\bar\lambda t)^n+\frac{(\lceil\alpha\rceil+1)^{\lceil\alpha\rceil}}{\lceil\alpha\rceil!}(\bar\lambda t)^{\lceil \alpha\rceil}\sum_{n\ge \lceil \alpha\rceil}\frac{(\bar\lambda t)^{n-\lceil \alpha\rceil}}{(n-\lceil \alpha\rceil)!}\right)\\&=\bar{k}^X_\alpha\left(\sum_{n=0}^{\lceil \alpha\rceil -1}\frac{(n+1)^{\alpha}}{n!}(\bar\lambda t)^n+\frac{(\lceil\alpha\rceil+1)^{\lceil\alpha\rceil}}{\lceil\alpha\rceil!}(\bar\lambda t)^{\lceil \alpha\rceil}e^{\bar\lambda t}\right).
\end{align*}
\end{proof}
\section{Evolution of the Wasserstein distance between two one-dimensional Piecewise Deterministic Markov Processes (PDMP)}\label{se:PDMP}

In this section, we are interested in proving the identity~\eqref{evowass} when $\{P_t\}_{t\ge 0}$ and $\{\oP_t\}_{t\ge 0}$  are the marginal laws of two PDMP $\{X_t\}_{t\geq 0}$ and $\{\oX_t\}_{t\ge 0}$. Let us recall that the infinitesimal generator of a PDMP is given by
 $$L f(x)= V(x) \nabla f (x) + \lambda(x) \left(\int_{\R^d} k(x,\dd y)\left( f(y)-f(x)\right) \right),$$
where $V:\R^d \rightarrow  \R^d $ is  a vector field, while $\lambda(x)$ and $ k(x,\dd y)$ respectively denote as before the jump intensity and the jump law. To give a sense to~\eqref{evowass}, we need at least to show that $\int_{\R^d} |L \psi_t(x)|P_t(\dd x)<\infty$. By the triangular inequality, it is sufficient to upper bound
$$\int_{\R^d} |V(x) \nabla  \psi_t(x)| P_t(\dd x) + \int_{\R^d}|\lambda(x) \psi_t(x)| P_t(\dd x)+ \int_{\R^d}\lambda(x) \left|\int_{\R^d} k(x,\dd y) \psi_t(y)\right|P_t(\dd x). $$
By using Proposition~\ref{prop:1epsilon}, and under suitable conditions on $V$, $\lambda$ and $k$ that ensure in particular the moment boundedness of the PDMP, we see that we can upper bound the second term. We can also upper bound the first one by using Theorem 6.2.4 of~\cite{cf:AGS} that gives $|\nabla \psi_t(x)|=\rho|T_t(x)-x|^{\rho-1}\le \rho 2^{\rho-1}(|T_t(x)|^{\rho-1}+|x|^{\rho-1})$, $P_t(\dd x)$-a.e., where $T_t$ is the optimal transport map from $P_t$ to $\tilde{P}_t$.  Thus, the main difficulty is to upper bound $\int_{\R^d}\lambda(x) |\int_{\R^d} k(x,\dd y) \psi_t(y)|P_t(\dd x)$. In the case of pure jump Markov processes, we are able to handle this term thanks to Lemma~\ref{remark: rapporto integrabilita} and the remarkable property~\eqref{eq: relazione qn pnt} on the marginal probability measures. For general PDMP, we no longer have this property, and our strategy here will be to estimate the difference between $\psi_t(y)$ and $\psi_t(x)$. To do so, we will work in dimension one ($d=1$) and use 
the 
explicit formulas~\eqref{potential_1d} for the Kantorovich potentials. 

\subsection{Main result}
We consider $\{X_t\}_{t\geq 0}$ and $\{\oX_t\}_{t\ge 0}$ two $\R$-valued PDMP with respective marginal laws $\{P_t\}_{t\ge 0}$ and $\{\oP_t\}_{t\ge 0}$ and
 infinitesimal generators given  by
 \begin{align}\label{eq:inf_gen_pdmp}
 L f(x)&= V(x)  f' (x) + \lambda(x) \left(\int_{\R^d} k(x,\dd y)\left( f(y)-f(x)\right) \right)\\
 \oL f(x)&= \tilde{V}(x) f' (x) + \olambda(x) \left(\int_{\R^d} \ok(x,\dd y)\left( f(y)-f(x)\right) \right)\,.\end{align}

We will work under the following assumptions for both processes. 
\begin{assumption}\label{ass:Vlambda}
(Assumption on $V$, $\lambda$).
 \begin{itemize}
  \item[(i)]The vector field $V$ is locally Lipschitz and bounded.
  \item[(ii)]The intensity of the jumps $\lambda(x)$ is a continuous function of~$x$,  bounded from above, and we denote $\bar \lambda= \sup_x\lambda(x) <\infty$.
 \end{itemize}
\end{assumption}

\begin{assumption}\label{ass:jumps}
(Assumption on the jump kernel $k(x,\dd y)$).
 \begin{itemize}
\item[(iii)] Jumps are bounded:  $\exists M>0, \ \forall x\in\R, \  \int_\R \ind_{\{|x-y|>M\}}k(x,\dd y)=0$.
\item[(iv)]$\int_{\R} \lambda(x) e^{-\frac{x^2}{2}}k(x,\,\dd y)\dd x$ 
admits a density with respect to the Lebesgue measure~$\dd y$.  
\item[(v)] $x \in \R \mapsto k(x,\dd y)$ is continuous for the weak convergence topology. 
 \end{itemize}
\end{assumption}
\noindent {\bf Assumption $\widehat{{\textbf{\ref*{ass:jumps}}}}$.} {\it Assumptions~\ref{ass:jumps} (iii), (v) and}
\begin{itemize}
\item[$\widehat{\textit{(iv)}}$] If $P(\{ x\})=0$ for all $x\in \R$, then for any $y\in\R$, $\int_{\R} \lambda(x) k(x,\,\{y\}) P(\dd x)=0$. 
 \end{itemize}
Let us note that Assumption~$\widehat{\textit{(iv)}}$ is satisfied if for any $y\in\R$, $\{x \in \R, \lambda(x) k(x,\,\{y\})>0 \}$ is countable.

Last, we will need in our analysis to work with marginal distributions that have some specific properties. For a probability measure $P(\dd x)$ on $\R$, we denote by $F(x)=P((-\infty,x])$ its cumulative distribution function and $\bar{F}(x)=1-F(x)$. For $\rho \ge 1$, we define the following subset of probability measures on~$\R$ 
\begin{align}
\widehat{\mathcal{P}}_\rho = &\{ P (\dd x), \ s.t. \ F \text{ is continuous increasing}, \  \int_\R |x|^{\rho}P(\dd x) <\infty, \label{def_Erhohat} \\  & \phantom{aaaaaa} \exists c,C>0, \forall y>0,  \ \sup_x\frac{F(x+y)}{F(x)} \le c e^{Cy},\ \sup_x\frac{\bar F(x-y)}{\bar F(x)} \leq c e^{Cy} \}, \nonumber \\
\mathcal{P}_\rho = &\{ P (\dd x)=p(x)\dd x, \ s.t. \ P\in\widehat{\mathcal{P}}_\rho \}.
\end{align}
In particular, a probability measure of $\mathcal{P}_\rho $ has a density that is almost everywhere positive.  The next lemma ensures some nice properties of the optimal transport map between two probability measures in $\widehat{\mathcal{P}}_\rho$.
\begin{lemma}\label{lem_moments_transport} Let $P$ and $\tilde{P}$ denote two probability measures on $\R$ with respective cumulative distribution functions $F$ and $\tilde{F}$.  We assume that $F$ is continuous and denote $T(x)=\tilde{F}^{-1}(F(x))$  the associated optimal transport map.
\begin{itemize}
\item[(i)] If  $\tilde{F}$ is increasing,  then $T$ is continuous and the Kantorovich potential $\psi$ given by~\eqref{potential_1d} is $C^1$.
\item[(ii)]  If $\forall y>0,  \ \sup_x\frac{F(x+y)}{F(x)} \le c e^{Cy},\ \sup_x\frac{\bar F(x-y)}{\bar F(x)} \leq c e^{Cy}$, then we have
$$\forall q>0, \ \forall y>0, \ \int_{\R} |T(x\pm y)|^q  P(\dd x) \le ce^{Cy} \int_{\R} |x|^q \tilde{P}(\dd x). $$
\end{itemize}
\end{lemma}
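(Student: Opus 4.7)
For (i), strict monotonicity of $\tilde F$ excludes flat segments, which are exactly what would produce jumps of the right-continuous pseudo-inverse $\tilde F^{-1}$; hence $\tilde F^{-1}$ is continuous on $(0,1)$ and, combined with continuity of $F$, so is $T=\tilde F^{-1}\circ F$. Since $\rho>1$, the function $z\mapsto\rho|z|^{\rho-2}z$ (with value $0$ at $z=0$) is continuous, so the integrand in~\eqref{potential_1d} is continuous in $x'$, and the fundamental theorem of calculus then gives that $\psi$ is $C^1$ with derivative $\psi'(x)=\rho|T(x)-x|^{\rho-2}(T(x)-x)$.

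For (ii), the plan is to apply Proposition~\ref{prop_transport_1d} with a constant $\varphi_y$. Since $F$ is continuous, $F(F^{-1}(u))=u$ for every $u\in(0,1)$; plugging $x=F^{-1}(u)$ into the first hypothesis yields $F(F^{-1}(u)+y)\le ce^{Cy}u$, so
\begin{equation*}
F(F^{-1}(u)+y)-u\le (ce^{Cy}-1)u=\int_0^u(ce^{Cy}-1)\,\dd v,
\end{equation*}
i.e.\ condition~\eqref{cond_phiy} holds with $\varphi_y\equiv ce^{Cy}-1\in L^\infty([0,1])$. Taking $\delta=+\infty$ in Proposition~\ref{prop_transport_1d} (so $\||\tilde X|^q\|_{1+1/\delta}=\E[|\tilde X|^q]$ and $\|\varphi_y(U)\|_{1+\delta}=ce^{Cy}-1$) then gives
\begin{equation*}
\int_\R|T(x-y)|^q P(\dd x)\le \E[|\tilde X|^q]+\E[|\tilde X|^q](ce^{Cy}-1)=ce^{Cy}\int_\R|x|^q\tilde P(\dd x).
\end{equation*}

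For the $T(x+y)$ bound, the strategy is to reduce to the previous case by reflecting through the origin. Let $P'$ and $\tilde P'$ be the pushforwards of $P$ and $\tilde P$ by $x\mapsto-x$; the CDF $F'(x)=1-F(-x)$ is continuous, and since any nondecreasing map pushing $P'$ to $\tilde P'$ coincides $P'$-a.s.\ with $\tilde F'^{-1}\circ F'$, one may take $T'(x)=-T(-x)$ as the optimal transport map. The substitution $x\mapsto-x$ transforms the second hypothesis $\bar F(x-y)\le ce^{Cy}\bar F(x)$ into $F'(x+y)\le ce^{Cy}F'(x)$, so the previous case applied to $(P',\tilde P',T')$ combined with the change of variable identity $\int_\R|T'(x-y)|^q P'(\dd x)=\int_\R|T(x+y)|^q P(\dd x)$ yields the desired bound. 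The only (mild) obstacle is the bookkeeping of the reflection; no genuinely new estimate is required beyond Proposition~\ref{prop_transport_1d}.
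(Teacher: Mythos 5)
Your proof is correct and takes essentially the same route as the paper: part (i) from the continuity of $\tilde F^{-1}$ when $\tilde F$ is increasing, and part (ii) by applying Proposition~\ref{prop_transport_1d} with the constant function $\varphi_y\equiv ce^{Cy}-1$ and $\delta=+\infty$ for $T(x-y)$, then handling $T(x+y)$ by reflecting $P$ and $\tilde P$ through the origin. The paper's own proof is exactly this argument stated more tersely, so your explicit verification of condition~\eqref{cond_phiy} via $F(F^{-1}(u))=u$ and your bookkeeping of the reflection merely fill in details the paper leaves implicit.
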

Notice that the existence of a positive density for $\tilde{P}$ is sufficient to ensure that $\tilde{F}$ is increasing. 
\begin{proof}
Point~$(i)$ is obvious since the hypothesis ensures that $\tilde{F}^{-1}$ is continuous. For Point~$(ii)$, note that we necessarily have $c\ge 1$. The result for $\int_{\R} |T(x- y)|^q  P(\dd x)$ is a corollary of Proposition~\ref{prop_transport_1d} with $\varphi_y(u)=ce^{Cy}-1$ and $\delta=+\infty$. The result for $\int_{\R} |T(x+ y)|^q  P(\dd x)$ is obtained by symmetry, looking at the optimal transport between the images of $P$ and $\tilde{P}$ by $x\mapsto -x$. 
\end{proof}

We now state the main result of this section.
\begin{theorem}\label{th: main PDMP 1D} Let $\rho>1$. 
 Suppose that both the processes $\{X_t\}_{t\ge 0}$ and $\{\oX\}_{t\ge 0}$  are real valued PDMP satisfying Assumptions~\ref{ass:Vlambda},~\ref{ass:jumps} with initial marginals $P_0,\oP_0 \in \mathcal{P}_{\rho(1+\varepsilon)}$ for some $\varepsilon>0$.
 Then for every $t\geq 0$
 \begin{equation}\label{eq: deri wass PDMPPJ}
  W_\rho^{\rho}(P_t,\oP_t)-W_\rho^{\rho}(P_0,\oP_0)=
  -\int_0^t\left(\int_\R L\,\psi_r(x)\, P_r(\dd x) +\int_\R \oL\,\opsi_r(x)\, \oP_r(\dd x) \right)\dd r\,.
 \end{equation}
This result remains valid if $P_0$ or $\oP_0 $ belong to the larger set~$\widehat{\mathcal{P}}_{\rho(1+\varepsilon)}$ and the corresponding processes satisfy Assumption~$\widehat{{\text{\ref*{ass:jumps}}}}$ instead of~\ref*{ass:jumps}.
 \end{theorem}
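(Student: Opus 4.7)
The plan is to approximate both PDMPs by pure jump Markov processes with bounded intensities, apply Theorem~\ref{teo:principale} to the approximations, and pass to the limit as suggested in the introduction. For $n\ge 1$, define $X^n$ as the pure jump process with initial law $P_0$ and generator
$$L^n f(x) = n\bigl[f(x+V(x)/n)-f(x)\bigr] + \lambda(x)\int_\R k(x,\dd y)[f(y)-f(x)],$$
and $\oX^n$ analogously from $(\tilde V, \tilde\lambda, \tilde k)$; the total intensity $n+\bar\lambda$ is bounded. A direct Dynkin argument applied to $f(x)=|x|^{\rho(1+\varepsilon)}$ gives $L^n f(x)\le C(1+|x|^{\rho(1+\varepsilon)})$ with $C$ independent of $n$ (thanks to $V$ bounded by Assumption~\ref{ass:Vlambda} and the jumps of $k$ bounded by $M$ from Assumption~\ref{ass:jumps}$(iii)$), hence Gronwall yields $\sup_n\sup_{r\in[0,t]}\E[|X^n_r|^{\rho(1+\varepsilon)}]<\infty$ locally in $t$. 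Theorem~\ref{teo:principale} then delivers, for each $n$, the identity~\eqref{eq: deri wass PDMPPJ} with $(L,\oL,\psi_r,\opsi_r,P_r,\oP_r)$ replaced by their $n$-indexed analogues.

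I would then pass to the limit in both sides. Convergence of the generators on smooth test functions combined with the uniform moment bounds gives, via standard Ethier--Kurtz type results, $P^n_t\Rightarrow P_t$ and then $W_\rho(P^n_t,P_t)\to 0$ by Theorem~6.9 of~\cite{cf:Villani}, and similarly for $\oP^n_t$; this handles the left-hand side. For the right-hand side I would first check that $P_t,\oP_t\in\mathcal{P}_{\rho(1+\varepsilon)}$ for all $t\ge 0$ (respectively in $\widehat{\mathcal P}_{\rho(1+\varepsilon)}$ under the hat version of Assumption~\ref{ass:jumps}), and that the approximating marginals $P^n_t,\oP^n_t$ satisfy the exponential tail ratio bounds entering Lemma~\ref{lem_moments_transport}$(ii)$ with constants uniform in $n$. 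This propagation uses continuity of the flow, boundedness of $V$ and $\lambda$, the bounded jump size from~\ref{ass:jumps}$(iii)$, and the absolute continuity in~\ref{ass:jumps}$(iv)$ (or its hat variant) to rule out atoms in $F^n_t$; continuity of $T^n_r$ and of $T_r$ then follows from Lemma~\ref{lem_moments_transport}$(i)$.

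The main obstacle is the passage to the limit inside $\int_0^t\int_\R L^n\psi^n_r\dd P^n_r\dd r$. Decomposing $L^n\psi^n_r$ into a drift part and a jump part, the jump term $\lambda(x)\int k(x,\dd y)[\psi^n_r(y)-\psi^n_r(x)]$ can be handled by writing
$$\psi^n_r(y)-\psi^n_r(x)=\rho\int_x^y|T^n_r(z)-z|^{\rho-2}(T^n_r(z)-z)\dd z$$
via \eqref{potential_1d}; the bounded support $|y-x|\le M$ of $k(x,\cdot)$, the uniform bound from Lemma~\ref{lem_moments_transport}$(ii)$, continuity of $x\mapsto k(x,\cdot)$ from Assumption~\ref{ass:jumps}$(v)$, and pointwise convergence $T^n_r\to T_r$ (itself following from uniform convergence of the CDFs on compacts together with continuity and strict monotonicity of the limit) yield the desired limit. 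For the drift term, \eqref{potential_1d} gives
$$n\bigl[\psi^n_r(x+V(x)/n)-\psi^n_r(x)\bigr]=\rho\int_0^{V(x)}|T^n_r(x+u/n)-(x+u/n)|^{\rho-2}(T^n_r(x+u/n)-(x+u/n))\dd u,$$
which converges pointwise to $V(x)\psi'_r(x)$ by continuity of $T_r$, while Lemma~\ref{lem_moments_transport}$(ii)$ applied with $y\le \|V\|_\infty$ provides the uniform-in-$n$ domination needed to exchange limit and integrals. The requirement $P_0,\oP_0\in\mathcal P_{\rho(1+\varepsilon)}$ rather than merely finite $\rho(1+\varepsilon)$-moments is essential at exactly this step: without the exponential ratio bound $F(x+y)/F(x)\le ce^{Cy}$, one cannot control $\int|T^n_r(x+V(x)/n)|^{\rho-1}P^n_r(\dd x)$ uniformly in $n$, and the dominated convergence argument collapses.
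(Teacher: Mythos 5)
Your overall strategy is the same as the paper's (approximate by bounded-intensity pure jump processes, apply Theorem~\ref{teo:principale} to the approximations, then pass to the limit using the class $\mathcal{P}_{\rho(1+\varepsilon)}$ and the transport-map bounds), but two steps do not go through as written. First, you replace the exact flow $\Phi(x,1/\mu)$ used in \eqref{eq: Kmu}--\eqref{eq: Lmu} by the Euler step $x\mapsto x+V(x)/n$. Since $V$ is only locally Lipschitz (Assumption~\ref{ass:Vlambda}(i)), for any fixed $n$ this map need not be injective: if $V$ has slope $-n$ on some interval, the map is constant there and pushes an atomless (even absolutely continuous) law onto an atom. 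Hence the propagation of ``no atoms''/positive density to the approximating marginals, which the paper obtains in Proposition~\ref{prop_outil} precisely because $x\mapsto\Phi(x,\pm 1/\mu)$ is an increasing homeomorphism with inverse $\Phi(\cdot,\mp 1/\mu)$, is not available for your scheme and can actually fail. You need $F^n_r$ continuous (and $\tilde F^n_r$ increasing) to claim that \eqref{potential_1d} defines genuine Kantorovich potentials $\psi^n_r$ for $(P^n_r,\oP^n_r)$ and to work with $T^n_r$ at all; with the Euler step this is unjustified. The repair is simply to jump along the flow, as in \eqref{eq: Kmu}; the tail-ratio and moment propagation you invoke are then exactly Proposition~\ref{prop_outil}.

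Second, the limit exchange is not a dominated convergence argument: in $\int_\R L^n\psi^n_r(x)\,P^n_r(\dd x)$ the measure varies with $n$, so pointwise (or even locally uniform) convergence of the integrand together with an $n$-uniform dominating bound does not permit passing to the limit. What is needed, and what the paper does, is: an $n$-uniform bound on $\int_\R |L^n\psi^n_r|^{1+\varepsilon}\dd P^n_r$ (this is \eqref{majo_Lmu}, built from Lemma~\ref{lem_moments_transport}(ii) and the uniform constants of Proposition~\ref{prop_outil}) to control the truncation error $\int_\R |L^n\psi^n_r|\ind_{\{|L^n\psi^n_r|>N\}}\dd P^n_r$ uniformly in $n$; local uniform convergence of $L^n\psi^n_r$ to $L\psi_r$ (your pointwise convergence of $T^n_r$ upgrades to local uniform convergence by Dini because the limit $T_r$ is continuous; this is Lemma~\ref{lem_convunif}) combined with tightness of $(P^n_r)_n$ to treat the truncated difference; and weak convergence of $P^n_r$ (Proposition~\ref{prop: convskoro}) tested against the bounded \emph{continuous} function obtained by truncating $L\psi_r$, whose continuity uses Assumption~\ref{ass:jumps}(v) and $\psi_r\in C^1$. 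Finally, to move the limit through $\int_0^t\cdots\dd r$ you need an $r$-uniform, $n$-uniform bound on $\int_\R |L^n\psi^n_r|\dd P^n_r$ over $[0,t]$, again supplied by \eqref{majo_Lmu} together with the uniform moment bounds; you have all these ingredients, but the argument must be organized as above rather than as a dominated convergence with an $n$-dependent measure.
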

\noindent The assumption on the initial laws may at first sight seem rather restrictive. However, it is always possible to approximate them by using the following lemma.
\begin{lemma}\label{lem_approx_vi}
Let $\rho \ge 1$. Let $X_0,Z$ be independent real random variables such that $\E[|X_0|^\rho]<\infty$ and $Z$ follows the density $p_Z(x)=\frac{1}{2}e^{-|x|}$. Then, for any $\eta>0$, $X_0+\eta Z \in \mathcal{P}_\rho$.
\end{lemma}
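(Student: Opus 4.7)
The plan is to verify directly the three defining properties of $\mathcal{P}_\rho$ for the law of $Y:=X_0+\eta Z$, by exploiting the fact that convolution with the Laplace density $\frac{1}{2\eta}e^{-|\cdot|/\eta}$ automatically produces a smooth, strictly positive density that is globally log-Lipschitz.

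First, since $\eta Z$ has density $p_{\eta Z}(x)=\frac{1}{2\eta}e^{-|x|/\eta}$ which is everywhere positive, the convolution $p_Y(y)=\int_\R \frac{1}{2\eta}e^{-|y-x|/\eta}P_{X_0}(\dd x)$ is a well-defined continuous density of $Y$, and is strictly positive on $\R$. Hence the c.d.f.\ $F_Y$ of $Y$ is continuous and strictly increasing. The moment bound $\E[|Y|^\rho]<\infty$ follows from the convexity inequality $|Y|^\rho\le 2^{\rho-1}(|X_0|^\rho+\eta^\rho|Z|^\rho)$ together with $\E[|X_0|^\rho]<\infty$ and $\E[|Z|^\rho]=\int_\R |x|^\rho \frac{e^{-|x|}}{2}\dd x<\infty$.

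The heart of the argument is the following log-Lipschitz estimate on $p_Y$: for every $y,h\in\R$, the reverse triangle inequality $|y+h-x|\ge|y-x|-|h|$ gives $e^{-|y+h-x|/\eta}\le e^{|h|/\eta}e^{-|y-x|/\eta}$ under the integral defining $p_Y$, so that
\begin{equation*}
\forall y,h\in\R,\qquad e^{-|h|/\eta}\,p_Y(y)\ \le\ p_Y(y+h)\ \le\ e^{|h|/\eta}\,p_Y(y).
\end{equation*}
From the lower bound applied at $h=u-x$ with $u\le x$, $p_Y(u)\ge e^{(u-x)/\eta}p_Y(x)$, and integrating on $(-\infty,x]$,
\begin{equation*}
F_Y(x)\ \ge\ p_Y(x)\int_{-\infty}^x e^{(u-x)/\eta}\,\dd u\ =\ \eta\, p_Y(x).
\end{equation*}
Using the upper bound in the other direction, for $y>0$,
\begin{equation*}
F_Y(x+y)-F_Y(x)=\int_x^{x+y}p_Y(z)\,\dd z\ \le\ p_Y(x)\int_x^{x+y}e^{(z-x)/\eta}\,\dd z=\eta\,p_Y(x)\bigl(e^{y/\eta}-1\bigr)\le F_Y(x)\bigl(e^{y/\eta}-1\bigr),
\end{equation*}
so that $F_Y(x+y)/F_Y(x)\le e^{y/\eta}$ uniformly in $x$. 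The analogous estimate for $\bar F_Y$ follows by the symmetric argument (replacing $(-\infty,x]$ by $[x,+\infty)$ and integrating to the left), yielding $\bar F_Y(x-y)/\bar F_Y(x)\le e^{y/\eta}$. This verifies the last property in the definition of $\mathcal{P}_\rho$ with constants $c=1$ and $C=1/\eta$, completing the proof.

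There is no real obstacle here: the only slightly delicate point is the order of the inequalities, ensuring that the same bound $\eta p_Y(x)\le F_Y(x)$ (respectively $\le \bar F_Y(x)$) is used to absorb $p_Y(x)$ into the c.d.f.\ in each of the two directions; both follow from the same exponential log-Lipschitz bound on $p_Y$ induced by convolution with the Laplace kernel.
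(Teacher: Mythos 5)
Your proof is correct, and it reaches the same constants ($c=1$, $C=1/\eta$) as the paper, but by a different mechanism for the tail estimate. The paper first checks the inequality $F_Z(x+y)\le e^{y}F_Z(x)$ for the Laplace law itself by an explicit case analysis (reducing to $e^x\le\cosh(y)$ and $1+\sinh(y)\le e^y$), and then transfers it to $Y=X_0+\eta Z$ through the conditioning identity $F_Y(x)=\E\left[F_Z\left((x-X_0)/\eta\right)\right]$, which propagates the pointwise bound inside the expectation; the symmetric bound on $\bar F_Y$ follows likewise. You instead never touch $F_Z$: you prove the global log-Lipschitz bound $e^{-|h|/\eta}p_Y(y)\le p_Y(y+h)\le e^{|h|/\eta}p_Y(y)$ for the convolved density via the reverse triangle inequality, deduce the key absorption inequality $\eta\,p_Y(x)\le\min\bigl(F_Y(x),\bar F_Y(x)\bigr)$, and then integrate the log-Lipschitz bound over $[x,x+y]$ (resp.\ $[x-y,x]$) to get $F_Y(x+y)\le e^{y/\eta}F_Y(x)$ and $\bar F_Y(x-y)\le e^{y/\eta}\bar F_Y(x)$. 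The paper's route is slightly more elementary (only monotonicity of $F_Z$ and scalar inequalities), while yours is more structural: it shows that convolution with any kernel whose density is globally log-Lipschitz with constant $C$ lands in $\mathcal{P}_\rho$ with $c=1$ and that same $C$, without needing the c.d.f.\ of the kernel in closed form. The remaining points (strict positivity and continuity of the density, hence $F_Y$ continuous increasing, and the moment bound by convexity) are handled the same way in both arguments.
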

\begin{proof}
We first observe that $p_Z(x)\dd x \in \mathcal{P}_\rho$, since $F_Z(x)=\ind_{\{x<0\}}\frac{1}{2}e^x+\ind_{\{x\ge 0\}}(1-\frac{1}{2}e^{-x})$ satisfies $F_Z(x+y)\le F_Z(x)e^y$ for any $x\in \R, y>0$. This is clear when $x+y\le 0$. When $x\le 0$ and $x+y\ge 0$, this is true since $1-\frac{1}{2}e^{-(x+y)}\le\frac{1}{2}e^y e^{-x} \iff e^{x}\le \cosh(y) $. When $0\le x$, we have 
 $1-\frac{1}{2}e^{-(x+y)}\le e^y (1-\frac{1}{2} e^{-x})\iff 1+e^{-x}\sinh(y)\le e^y $ which holds since $1+\sinh(y)\le e^y$. 

Clearly, $\E[|X_0+\eta Z|^\rho]<\infty$. Since $\P(X_0+\eta Z\le x)=\E[F_{Z}((x-X_0)/\eta)]$, this law satisfies again the estimates in~\eqref{def_Erhohat} with $c=1$ and $C=1/\eta$. Besides,
 the density of $X_0+\eta Z$ is equal to $\E[p_{Z}((x-X_0)/\eta)]/\eta >0$.
\end{proof}

\begin{corollary}
Let $P_0$ and $\oP_0$ be two probability laws on~$\R$ with  finite moments of order $\rho(1+\varepsilon)$ for some $\varepsilon>0$. For $\eta>0$, we  define $P^\eta_0 \in \mathcal{P}_{\rho(1+\varepsilon)}$ and (resp. $\oP^\eta_0\in \mathcal{P}_{\rho(1+\varepsilon)}$) as the law of $X_0+\eta Z$ (resp. $\tilde{X_0}+\eta Z$) where $X_0$ (resp. $\tilde{X_0}$) and $Z$ are independent and respectively distributed according to $P_0$, $\oP_0$ and the density $p_Z(x)=\frac{1}{2}e^{-|x|}$. We denote  $P^\eta_t$ (resp. $\oP^\eta_t$)  the marginal law at time~$t$ of the PDMP with generator $L$ (resp. $\tilde{L}$). Then,  under Assumptions~\ref{ass:Vlambda} and~\ref{ass:jumps}, we have
\begin{equation}\label{limite_regularisation_loi_init} W_\rho^{\rho}(P_t,\oP_t)-W_\rho^{\rho}(P_0,\oP_0)=
  - \lim_{\eta \rightarrow 0}\int_0^t\left(\int_\R L\,\psi^\eta_r(x)\, P^\eta_r(\dd x) +\int_\R \oL\,\opsi^\eta_r(x)\, \oP^\eta_r(\dd x) \right)\dd r, 
\end{equation}
where $(\psi^\eta_r,\opsi^\eta_r)$ are Kantorovich potentials for  $P^\eta_r$ and $\oP^\eta_r$.
\end{corollary}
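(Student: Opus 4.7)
The plan is to apply Theorem~\ref{th: main PDMP 1D} to the regularized initial data $P_0^\eta$ and $\oP_0^\eta$, which by Lemma~\ref{lem_approx_vi} (used with $\rho$ replaced by $\rho(1+\varepsilon)$) both lie in $\mathcal{P}_{\rho(1+\varepsilon)}$. This yields, for every $\eta>0$,
\begin{equation}\label{plan:eq1}
W_\rho^\rho(P_t^\eta,\oP_t^\eta)-W_\rho^\rho(P_0^\eta,\oP_0^\eta) = -\int_0^t\left(\int_{\R}L\psi_r^\eta(x)P_r^\eta(\dd x)+\int_{\R}\oL\opsi_r^\eta(x)\oP_r^\eta(\dd x)\right)\dd r.
\end{equation}
Hence~\eqref{limite_regularisation_loi_init} follows once one shows that the left-hand side of~\eqref{plan:eq1} converges to $W_\rho^\rho(P_t,\oP_t)-W_\rho^\rho(P_0,\oP_0)$ as $\eta\to 0$; by the triangle inequality for $W_\rho$, this reduces to proving $W_\rho(P_s^\eta,P_s)\to 0$ and $W_\rho(\oP_s^\eta,\oP_s)\to 0$ for $s=0$ and $s=t$.

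For $s=0$, the coupling $(X_0,X_0+\eta Z)$ immediately yields $W_\rho^\rho(P_0^\eta,P_0)\le\eta^\rho\E[|Z|^\rho]\to 0$, and similarly for $\oP_0^\eta$. For $s=t>0$, the plan is to decompose the argument into (a) the weak convergence $P_t^\eta\rightharpoonup P_t$ and (b) a uniform-in-$\eta$ bound on the moment $\int_{\R}|x|^{\rho(1+\varepsilon)}P_t^\eta(\dd x)$; combining these two ingredients with Theorem~6.9 of~\cite{cf:Villani} will give the required $W_\rho$-convergence. Point (b) is straightforward under Assumptions~\ref{ass:Vlambda}(i) and~\ref{ass:jumps}(iii): the pathwise bound $|X_t^\eta|\le |X_0|+\eta|Z|+\|V\|_\infty t+M N_t$, where $N_t$ is a Poisson process of intensity $\bar\lambda$ dominating the number of jumps up to time $t$, combined with the finite $\rho(1+\varepsilon)$-moments of $X_0$, $Z$ and $N_t$, gives a bound uniform for $\eta\in(0,1]$.

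The main obstacle is point (a), namely the weak continuity of the marginal $P_t$ of the PDMP with respect to its initial distribution. My preferred route is to couple the two PDMPs via the common Poisson random measure appearing in~\eqref{eq: Xt martingala}, with the common atom $X_0$ for the two initial conditions $X_0$ and $X_0+\eta Z$, and a coupling of the target laws $k(X_{r-},\cdot)$ and $k(X_{r-}^\eta,\cdot)$ exploiting their weak continuity in the base point (Assumption~\ref{ass:jumps}(v)); the local Lipschitz character of $V$ (Assumption~\ref{ass:Vlambda}(i)) together with the boundedness of the jumps (Assumption~\ref{ass:jumps}(iii)) confines the two paths to a compact set on $[0,t]$ and controls their pathwise separation between jump times, so that a Gronwall-type argument yields $X_t^\eta\to X_t$ in probability as $\eta\to 0$. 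An alternative would be to invoke directly a Feller property of the PDMP semigroup under Assumptions~\ref{ass:Vlambda} and~\ref{ass:jumps}, established by classical martingale-problem arguments. Once (a) and (b) are in hand, the left-hand side of~\eqref{plan:eq1} converges to $W_\rho^\rho(P_t,\oP_t)-W_\rho^\rho(P_0,\oP_0)$, which forces the right-hand side to converge to the same limit, yielding~\eqref{limite_regularisation_loi_init}.
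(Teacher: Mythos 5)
Your overall strategy is the same as the paper's: apply Theorem~\ref{th: main PDMP 1D} to the regularized initial laws (which lie in $\mathcal{P}_{\rho(1+\varepsilon)}$ by Lemma~\ref{lem_approx_vi}), obtain the integral identity for each $\eta>0$, and then pass to the limit in the left-hand side by combining uniform-in-$\eta$ moment bounds of order $\rho(1+\varepsilon)$ (the paper quotes Proposition~\ref{prop_outil}; your pathwise bound $|X^\eta_t|\le |X_0|+\eta|Z|+\|V\|_\infty t+MN_t$ is the same mechanism) with weak convergence of the time marginals and Theorem~6.9 of Villani; your explicit coupling $(X_0,X_0+\eta Z)$ for the initial time is fine and in fact makes the $s=0$ case cleaner than the paper's implicit treatment.

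The one point where you depart from the paper is the proof of the weak convergence $P^\eta_t\rightharpoonup P_t$, and there your preferred route has a genuine gap. Assumption~\ref{ass:jumps}~(v) only gives weak continuity of $x\mapsto k(x,\dd y)$; this yields no quantitative modulus (Lipschitz or otherwise) on the coupled jump destinations, so there is nothing to feed into a Gronwall inequality. Moreover, in the thinning representation~\eqref{repr_X}--\eqref{repr_Xmu} the two processes accept or reject a candidate jump according to $\ind_{\{U^2\le \lambda(\cdot)/\bar\lambda\}}$ evaluated at \emph{different} states, so with positive probability one process jumps while the other does not, creating an order-one separation that a Gronwall argument cannot absorb; controlling the probability of such discrepancies and of badly coupled jump targets requires a substantially more careful (and not purely Gronwall-type) argument than you sketch. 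Your stated alternative, however, is exactly the paper's proof: tightness of the path laws plus identification of any limit point as the unique solution of the martingale problem for $L$ started from $P_0$ (the argument of Proposition~\ref{prop: convskoro}, using uniqueness from Lepeltier--Marchal, noting that $P^\eta_0\rightharpoonup P_0$). With that substitution in place of the coupling sketch, your proof is complete and matches the paper's.
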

\begin{proof}
Let $t\ge 0$. On the one hand, we have from Proposition~\ref{prop_outil} below uniform in $\eta\in [0,1]$ bounds on the moments of order $\rho(1+\varepsilon)$ of the probability measures $P^\eta_t$ and $\oP^\eta_t$. On the other hand, we have the weak convergence of  $P^\eta_t$ (resp. $\oP^\eta_t$) to  $P_t$ (resp.  $\oP_t$) when $\eta \rightarrow 0$. This can be obtained by looking at the corresponding martingale problems and using standard tightness arguments, exactly as in the proof of Proposition~\ref{prop: convskoro}. Theorem~6.9 of~\cite{cf:Villani} then gives $W_\rho^{\rho}(P^\eta_t,\oP^\eta_t)\underset{\eta \rightarrow 0}\rightarrow W_\rho^{\rho}(P_t,\oP_t)$. We finally get~\eqref{limite_regularisation_loi_init} by Theorem~\ref{th: main PDMP 1D}.
\end{proof}

\subsection{Proof of Theorem~\ref{th: main PDMP 1D}}

To prove Theorem~\ref{th: main PDMP 1D} for the processes $\{X_t\}_{t\ge 0}$ and $\{\tilde{X}_t\}_{t\ge 0}$, we will approximate them by pure jump processes in order to use Theorem~\ref{teo:principale}. To do so, we define, for 
 $\mu\ge 1$ the following jump kernel
  \begin{equation}\label{eq: Kmu}
   k^\mu(x,\dd y)=  \frac{1}{\mu + \lambda(x)}\left( \mu \delta_{\left\{\Phi\left(x;\,\frac 1\mu\right)\right\}}(\dd y)
   + \lambda(x) k(x,\dd y) \right)\,,
  \end{equation}
where $\Phi(x,s)$ is the flow of the differential equation driven by~$V$, i.e. the solution of
\begin{equation}\label{eq:ode}
\frac{\partial}{\partial t}  \Phi(x,t)= V(\Phi(x,t)), \ 
\Phi(x,0)=x, \quad \forall\, x \in \R \,.
\end{equation}
We define the process $\{X_t^\mu\}_{t\ge 0}$ as a pure jump Markov process with jump intensity $\mu+\lambda(x)$,   transition kernel $k^\mu(x,\dd y)$ and initial distribution $P_0$. It has the following infinitesimal generator
\begin{equation}\label{eq: Lmu}
 L^\mu f(x)= \mu \left[f\left(\Phi\left(x,\frac 1\mu\right)\right)-f(x)\right] 
 +\lambda(x)\int_{\R} (f(y)-f(x))k(x,\dd y)\,.
\end{equation} 
We note $P^\mu_t$ the probability law of $X^\mu_t$. From Proposition~\ref{prop: convskoro}, the process $\{X_t^\mu\}_{t\ge 0}$ weakly converges to the process $\{X_t\}_{t\ge 0}$ when $\mu \rightarrow +\infty$. By a slight abuse of notation, we will set $X^{+\infty}=X$, $L^{+\infty}=L$ and $P^{+\infty}_t=P_t$. The following remark recalls a usual way to sample the processes $X$ and $X^\mu$. 

\begin{remark}\label{rk:representation} At some point, it will be convenient to work with a pathwise representation of the processes $X^{\mu}$ and $X$. To do so, let us consider $N$ a Poisson process of parameter~$\bar{\lambda}$, $N^\mu$ a Poisson process of parameter~$\mu$ and $(U^1_i)_{i\ge 1}$,$(U^2_i)_{i\ge 1}$  two sequences of independent uniform random variables on~$[0,1]$, that are independent and independent  from the initial value~$X_0\sim P_0$. Let us set for $x\in \R$, $u \in (0,1)$
$$H(x,u)=\inf \{ z \ :  \   k(x,(-\infty,z]) > u \}.$$
Then, the infinitesimal generators of the following processes 
\begin{align}
X_t&=X_0+\int_0^t V(X_s) \dd s +  \int_0^t \ind_{\{U^2_{N_s}\le \lambda(X_{s^-})/\bar{\lambda} \}}H(X_{s^-},U^1_{N_s})\dd N_s, \label{repr_X}\\
X^{\mu}_t&= X_0+ \int_0^t \Phi\left(X^{\mu}_{s^-},\frac 1\mu\right) \dd N^\mu_s+ \int_0^t \ind_{\{U^2_{N_s}\le \lambda(X^{\mu}_{s^-})/\bar{\lambda} \}}H(X^{\mu}_{s^-},U^1_{N_s})\dd N_s, \label{repr_Xmu}
\end{align}
are respectively $L$ and $L^\mu$. 
\end{remark}

To prove Theorem~\ref{th: main PDMP 1D}, we start by a technical result stating that the marginal laws of the PDMP and its approximations stay in the set  $\mathcal{P}_\rho$ of probability measures. It also gives uniform constants in $\mu$ that will be useful to handle limits when $\mu \rightarrow +\infty$. Then, we prove two technical lemmas. The first one brings on the integrability of Kantorovich potentials for probability laws in~$\mathcal{P}_\rho$. The last lemma shows the local uniform convergence of the Kantorovich potential  given by~\eqref{potential_1d} between $P^\mu_t$ and $\oP^\mu_t$ toward the same Kantorovich potential between $P_t$ and $\oP_t$. From now on, we denote by $F_t$, $\tilde{F}_t$, $F^\mu_t$ and $\tilde{F}^\mu_t$ the respective cumulative distribution functions of $P_t$, $\oP_t$, $P^\mu_t$ and $\oP^\mu_t$.

\begin{proposition}\label{prop_outil}

\begin{itemize}
\item  Let us assume that Assumptions~\ref{ass:Vlambda} and~\ref{ass:jumps}~(iii) hold. Then, we have for any $t>0$, $q>0$,
\begin{equation}\label{bound_moments}
\sup_{\mu \in [1/t,+\infty]} \E[|X^\mu_t-X_0|^q]\le 2^{(q-1)^+} \left(\|V\|_\infty^q t^q (q/e)^q e^e+M^q   (q/e)^q \exp(\bar{\lambda} t (e-1))  \right).
\end{equation}
\item Let us assume that $P_0(\dd x)=p_0(x)\dd x$ with $p_0(x)>0$ $\dd x$-a.e. (resp. $F_0$ is continuous increasing) and Assumption~\ref{ass:jumps}~(iv) (resp. $\widehat{\text{(iv)}}$) hold. Then, for any $t\ge 0$, $P_t(\dd x)=p_t(x)\dd x$ and $P^\mu_t(\dd x)=p^\mu_t(x)\dd x$ with $p_t(x)p^\mu_t(x)>0$ $\dd x$-a.e. (resp. $F_t$ and  $F_t^\mu$ are continuous increasing).
\item Let us assume that Assumptions~\ref{ass:Vlambda} and \ref{ass:jumps}~(iii) hold. If $C_0,c_0>0$ denote constants such that 
$$\forall y>0,  \ \sup_x\frac{F_0(x+y)}{F_0(x)}\le c_0 e^{C_0y},\quad \sup_x\frac{\bar F_0(x-y)}{\bar F_0(x)} \leq c_0 e^{C_0y},$$
we have for any $\mu\in [1,+\infty]$,
\begin{equation}\label{majo_tails}\forall y>0,  \ \sup_x\frac{F^\mu_t(x+y)}{F^\mu_t(x)} \le c_t e^{C_0y},\quad \sup_x\frac{\bar F^\mu_t(x-y)}{\bar F^\mu_t(x)} \leq c_t e^{C_0y},
\end{equation}
with $c_t= c_0^2 \exp\left( t(e^{C_0\|V\|_\infty}-e^{-C_0\|V\|_\infty})+ \bar{\lambda} t (e^{C_0M}-e^{-C_0M}) \right) $.
\end{itemize}
\end{proposition}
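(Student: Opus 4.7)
My plan is to treat the three statements separately, all anchored in the pathwise representation of Remark~\ref{rk:representation}, which realizes $X^\mu$ (and $X$ for $\mu=+\infty$) on a probability space carrying $X_0\sim P_0$, independent Poisson processes $N$ of rate $\bar\lambda$ and $N^\mu$ of rate $\mu$, and iid uniforms. Since Assumption~\ref{ass:Vlambda} gives $|\Phi(x,1/\mu)-x|\le \|V\|_\infty/\mu$ and Assumption~\ref{ass:jumps}(iii) forces each kernel jump to have amplitude at most $M$, one has the deterministic \emph{a priori} bound
\[
|X^\mu_t-X_0|\ \le\ \frac{\|V\|_\infty}{\mu}N^\mu_t\ +\ MN_t\qquad\text{a.s.,}
\]
the right-hand side being independent of $X_0$. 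This is the driving estimate for every part of the proof.

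For Part 1 I combine the displacement bound with $(a+b)^q\le 2^{(q-1)^+}(a^q+b^q)$ and control each Poisson moment via its MGF. The elementary inequality $n^q\le (q/(ae))^q e^{an}$, valid for every $a>0$, gives $\mu^{-q}\E[(N^\mu_t)^q]\le (q/(ae\mu))^q e^{\mu t(e^a-1)}$; the choice $a=1/(\mu t)\in(0,1]$, allowed because $\mu t\ge 1$, combined with $e^s-1\le es$ on $[0,1]$, yields $\mu^{-q}\E[(N^\mu_t)^q]\le t^q(q/e)^q e^e$, reproducing the first summand of~\eqref{bound_moments}. Picking $a=1$ for $N_t$ produces $\E[(N_t)^q]\le (q/e)^q e^{\bar\lambda t(e-1)}$, the second summand. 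The case $\mu=+\infty$ is dominated by the same right-hand side since the drift contribution reduces to the deterministic $\|V\|_\infty t$.

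Part 2 proceeds by induction on the number of kernel jumps, using the marginal decomposition \eqref{eq:marginale} (which applies verbatim to $X^\mu$ and extends to $X$ after conditioning on the kernel jump times). On the zero-kernel-jump stratum, the law of the process at time $t$ is the image of $P_0$ by a finite composition of flow maps; local Lipschitz continuity of $V$ makes each flow a $C^1$-diffeomorphism with positive Jacobian in dimension one, preserving both positivity of $p_0$ and continuity with strict monotonicity of $F_0$. On strata with at least one kernel jump, Assumption~\ref{ass:jumps}(iv) provides absolute continuity of the post-first-jump distribution (by minorizing $\lambda p_0$ by a truncated Gaussian), a property then preserved by subsequent flows and kernel jumps. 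Under the weaker variant~$\widehat{\text{(iv)}}$, the same decomposition reduces the absence of atoms in $F^\mu_t$ to the absence of atoms in $F_0$ (given) and to the vanishing of $\int\lambda(x)k(x,\{y\})P(\dd x)$ for continuous $P$ (given by $\widehat{\text{(iv)}}$).

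Part 3 is the main technical point. The infinitesimal generator applied to the exponential $e^{\alpha\cdot}$ gives, thanks to the amplitude bounds,
\[
L^\mu e^{\alpha\cdot}(x)\ \le\ e^{\alpha x}\bigl[\mu(e^{|\alpha|\|V\|_\infty/\mu}-1)+\bar\lambda(e^{|\alpha|M}-1)\bigr],
\]
and the monotonicity of $\mu\mapsto\mu(e^{s/\mu}-1)$ on $(0,\infty)$ makes this estimate uniform in $\mu\in[1,+\infty]$, with limiting form $(e^{|\alpha|\|V\|_\infty}-1)+\bar\lambda(e^{|\alpha|M}-1)$. Gronwall then controls $\E[e^{\pm C_0 X^\mu_t}]$ in terms of $\E[e^{\pm C_0 X_0}]$. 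The next step is to translate the initial tail hypothesis (combined with its right-tail counterpart, which accounts for the $c_0^2$ factor) into an operational statement saying that any image of $P_0$ by a displacement of amplitude at most $\delta$ satisfies the tail estimate with the same exponential rate $C_0$ and a multiplicative constant worsened by a factor controlled by $2\sinh(C_0\delta)$---the $\sinh$ form being what produces the exponents $e^{C_0\|V\|_\infty}-e^{-C_0\|V\|_\infty}$ and $e^{C_0M}-e^{-C_0M}$ in~\eqref{majo_tails} once one iterates the one-step propagation over the Poissonian number of flow and kernel steps and applies the MGF bound above. The main obstacle is that whether a kernel jump is actually accepted depends on the current state, which forces the Poissonian book-keeping to be handled by dominating $X^\mu$ by an ``always-accept'' version (enlarging the set of realized jumps only spreads mass further and thus preserves the direction of the tail inequality).
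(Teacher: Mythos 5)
Your first bullet is exactly the paper's argument: the pathwise sandwich $|X^\mu_t-X_0|\le \|V\|_\infty N^\mu_t/\mu+MN_t$ from \eqref{repr_Xmu}, the inequality $x^q\le (q/e)^q e^x$, and $s(e^{1/s}-1)\le e$ for $s=\mu t\ge 1$, with the $\mu=+\infty$ case handled by $\|V\|_\infty t$. Your second bullet also follows the paper's scheme (stratify by jumps, flows preserve densities/atomlessness, Assumption~\ref{ass:jumps}~(iv) handles the kernel step), with two loose points worth fixing: a locally Lipschitz $V$ only gives an increasing bi-Lipschitz flow, a.e.\ differentiable with a.e.\ positive derivative (not a $C^1$-diffeomorphism) --- this suffices and is what the paper uses; and ``minorizing $\lambda p_0$ by a truncated Gaussian'' is the wrong direction --- the correct mechanism is transfer of null sets: if $A$ is Lebesgue-null, (iv) forces $\lambda(x)k(x,A)=0$ for Lebesgue-a.e.\ $x$, hence for $p\,\dd x$-a.e.\ $x$, so $\int_\R p(x)\lambda(x)k(x,A)\dd x=0$.

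The genuine gap is in the third bullet. First, the detour through $\E[e^{\pm C_0X^\mu_t}]$ via $L^\mu$ applied to exponentials and Gronwall cannot carry the argument: the standing assumptions ($P_0\in\widehat{\mathcal P}_{\rho(1+\varepsilon)}$ and the tail-ratio bounds) are compatible with polynomial tails, so $\E[e^{\pm C_0X_0}]$ is typically $+\infty$ and the Gronwall estimate is vacuous; and even when finite, exponential moments of $X^\mu_t$ do not imply the pointwise ratio bounds \eqref{majo_tails}. Second, ``iterating the one-step propagation over the Poissonian number of steps'' is not a valid derivation: a bounded displacement $|D|\le\delta$ does propagate the ratio bound (with constant multiplied by $e^{2C_0\delta}$), but the law $P^\mu_t$ is a Poisson mixture over the numbers $(m,n)$ of flow/kernel ticks of conditional laws whose constants $c_0e^{2C_0(m\|V\|_\infty/\mu+nM)}$ are unbounded, and a mixture of laws satisfying ratio bounds with unbounded constants need not satisfy one with an averaged constant (the needed inequality $\sum_{m,n}w_{m,n}\kappa_{m,n}F_{m,n}(x)\le \kappa\sum_{m,n}w_{m,n}F_{m,n}(x)$ for all $x$ does not follow from $\E[\kappa]<\infty$). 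What does work --- and is the paper's proof --- is a two-sided comparison to the fixed reference $F_0$ using your ``always-accept'' sandwich and the independence of $(N,N^\mu)$ from $X_0$: $F^\mu_t(x+y)\le \E\left[F_0\left(x+y+\|V\|_\infty N^\mu_t/\mu+MN_t\right)\right]\le c_0e^{C_0y}F_0(x)\,\E\left[e^{C_0(\|V\|_\infty N^\mu_t/\mu+MN_t)}\right]$ and $F^\mu_t(x)\ge \E\left[F_0\left(x-\|V\|_\infty N^\mu_t/\mu-MN_t\right)\right]\ge c_0^{-1}F_0(x)\,\E\left[e^{-C_0(\|V\|_\infty N^\mu_t/\mu+MN_t)}\right]$, and symmetrically for $\bar F^\mu_t$; taking the ratio, the Poisson exponential moments produce exactly the factor $c_0^2$ and the exponent $\mu t(e^{C_0\|V\|_\infty/\mu}-e^{-C_0\|V\|_\infty/\mu})+\bar\lambda t(e^{C_0M}-e^{-C_0M})$, and the uniformity over $\mu\in[1,+\infty]$ follows from the monotonicity in $\mu$ you already noted. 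You have these ingredients, but as written the third bullet is not proved.
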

\noindent Since $\E[|X^\mu_t-X_0|^\rho] \le 2^{\rho-1}(\E[|X^\mu_t|^\rho]+\E[|X_0|^\rho])$, we easily get the following corollary.
\begin{corollary}\label{cor_stability_Prho}
Let Assumptions~\ref{ass:Vlambda},~\ref{ass:jumps}~(iii) and (iv) (resp. $\widehat{\text{(iv)}}$) hold. Then, if $P_0 \in \mathcal{P}_\rho$ (resp. $P_0 \in \widehat{\mathcal{P}}_\rho$) we have that $$\forall t\ge 0, \mu \in [1 ,+\infty], \ P^\mu_t \in \mathcal{P}_\rho \ \text{(resp. }  P^\mu_t \in \widehat{\mathcal{P}}_\rho).$$ 
\end{corollary}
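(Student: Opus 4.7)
The plan is to observe that Corollary~\ref{cor_stability_Prho} is essentially a bookkeeping statement: membership in $\widehat{\mathcal{P}}_\rho$ (resp.\ $\mathcal{P}_\rho$) is characterised by three conditions, namely (a) continuity and strict monotonicity of the cumulative distribution function (resp.\ existence of an a.e.\ positive density), (b) finite $\rho$-th moment, and (c) the two uniform exponential tail ratio bounds. Each of these three conditions matches exactly one of the three bullets of Proposition~\ref{prop_outil}, so I would verify them one by one for $P^\mu_t$, $\mu\in[1,+\infty]$ and $t\ge 0$, starting from $P_0\in \widehat{\mathcal{P}}_\rho$ (resp.\ $\mathcal{P}_\rho$).

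For condition (b), I would write
\[
\E[|X^\mu_t|^\rho]\le 2^{(\rho-1)^+}\bigl(\E[|X^\mu_t-X_0|^\rho]+\E[|X_0|^\rho]\bigr),
\]
and control the first term via~\eqref{bound_moments} with $q=\rho$. The bound given there is uniform in $\mu\in[1/t,+\infty]$, which combined with the assumption $P_0\in \widehat{\mathcal{P}}_\rho$ (giving the second term) yields $\int_\R|x|^\rho P^\mu_t(\dd x)<\infty$. For the residual regime $\mu\in[1,1/t)$, which only matters when $t<1$ and hence $t<1/\mu\le 1$ so we remain on a compact time interval, the same Poisson counting plus recursive-jump estimate that underlies the proposition applies verbatim and gives a finite bound; it is the uniformity in $\mu$ on any compact time interval that ultimately matters, not the precise constant.

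For condition (c), the two tail ratio inequalities required in the definitions are literally \eqref{majo_tails}, with constants $c_t$ and $C_0$ depending on $t,\|V\|_\infty,M,\bar\lambda,c_0,C_0$ but independent of~$\mu$, so they transfer without modification. Finally, for condition (a), I would invoke the second bullet of Proposition~\ref{prop_outil}: under the assumption that $F_0$ is continuous and increasing together with $\widehat{\text{(iv)}}$, one gets continuity and strict monotonicity of $F^\mu_t$, establishing $P^\mu_t\in\widehat{\mathcal{P}}_\rho$; under the stronger assumption that $p_0>0$ a.e.\ together with (iv), one further gets that $P^\mu_t(\dd x)=p^\mu_t(x)\dd x$ with $p^\mu_t>0$ a.e., which additionally places $P^\mu_t$ in~$\mathcal{P}_\rho$.

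Since the three verifications are essentially immediate once Proposition~\ref{prop_outil} is granted, there is no real obstacle at the level of the corollary itself; the substantive work is hidden in that proposition, particularly in propagating the tail ratio bounds through the ODE flow and the bounded jump kernel, and in showing that the absolutely continuous (resp.\ continuous, increasing) character of $F_0$ is preserved by both $L^\mu$ and~$L$.
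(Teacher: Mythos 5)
Your proposal is correct and follows essentially the same route as the paper, which deduces the corollary directly from the three bullets of Proposition~\ref{prop_outil} (moments via \eqref{bound_moments} combined with a convexity inequality against $\E[|X_0|^\rho]$, continuity/positive density from the second bullet, and the tail ratio bounds \eqref{majo_tails} from the third); your observation that only finiteness of the $\rho$-moment is needed, so the regime $\mu\in[1,1/t)$ causes no difficulty since Poisson variables have all moments, is also the right reading. Incidentally, your version of the moment inequality, $\E[|X^\mu_t|^\rho]\le 2^{(\rho-1)^+}\bigl(\E[|X^\mu_t-X_0|^\rho]+\E[|X_0|^\rho]\bigr)$, is the direction actually needed, whereas the paper states it with the roles of $|X^\mu_t-X_0|$ and $|X^\mu_t|$ interchanged.
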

\begin{proof}[Proof of Proposition~\ref{prop_outil}]
{\it Moments.} From~\eqref{repr_Xmu}, we have $|X^\mu_t-X_0|\le \frac{\|V\|_\infty}{\mu}N^\mu_t+M N_t$ and thus
$\E[|X^\mu_t-X_0|^q]\le 2^{(q-1)^+} \left(\left(\frac{\|V\|_\infty}{\mu}\right)^q \E[(N^\mu_t)^q]+M^q \E[N_t^q] \right)$. Since $x^q\le(q/e)^q  e^x $ for $x\ge 0$, we get $\E[(N^\mu_t/(\mu t))^q]\le (q/e)^q \exp(\mu t (e^{1/(\mu t)}-1))$ and $\E[N_t^q]\le (q/e)^q \exp( \bar{\lambda} t (e-1))$. We thus obtain~\eqref{bound_moments} since $s(e^{1/s}-1)\le e$ for $s\ge 1$. From~\eqref{repr_X}, we have similarly $|X_t-X_0|\le \|V\|_\infty t+M N_t$ and we deduce~\eqref{bound_moments} for $\mu=+\infty$.

{\it Density (under Assumption~\ref{ass:jumps}~(iv)).} Let $\tilde{k}^\mu(x,\dd y)=\frac{\lambda(x)}{\mu + \bar{\lambda}}k(x,\dd y)+\frac{\mu}{\mu + \bar{\lambda}} \delta_{\Phi(x,1/\mu)}(\dd y)+\frac{\bar{\lambda}-\lambda(x)}{\mu + \bar{\lambda}}\delta_x(\dd y)$ and $p_0$ denotes the density of $P_0$. Then, similarly to~\eqref{eq:marginale} and~\eqref{eq:Pnt}, we have $P^{\mu}_t(\dd x)=\sum_{n\ge 0}P^\mu_{n,t}(\dd x)$ with
 $P^\mu_{0,t}(\dd x_0)=e^{-(\mu+\bar{\lambda}) t}p_0(x_0) \dd x_0$ and for $n\ge 1$, 
$$P^\mu_{n,t}(\dd x_n)=e^{-(\mu+\bar{\lambda}) t}\frac{(\mu+\bar{\lambda})^n t^n}{n!}\int_{\R^{n}}p_0(x_0)\dd x_0 \prod_{j=0}^{n-1}\tilde{k}^\mu(x_{j},\dd x_{j+1}).$$
By induction, it is sufficient to check that for a probability density $p$ on $\R$,
\begin{align*}\int_{\R} p(x_0)\dd x_0 \tilde{k}^\mu(x_{0},\dd x_{1})=& \frac{1}{\mu + \bar{\lambda}}\int_{\R}\lambda(x_0) p(x_0)\dd x_0k(x_0,\dd x_1)\\
& +\frac{\mu}{\mu + \bar{\lambda}} \int_{\R} p(x_0)  \dd x_0 \delta_{\Phi(x_0,1/\mu)}(\dd x_1)  + p(x_1) \frac{\bar{\lambda}-\lambda(x_1)}{\mu + \bar{\lambda}}\dd x_1
\end{align*}
is absolutely continuous with respect to $\dd x_1$. On the one hand,  $p(x_0)\lambda(x_0)k(x_0,\dd x_1)\dd x_0$ is absolutely continuous with respect to $ e^{-\frac{x_0^2}{2}}\lambda(x_0)k(x_0,\dd x_1)\dd x_0$. From Assumption~\eqref{ass:jumps}(iv), we get that for every Lebesgue negligible set $A\subset \R$, $\int_{\R}p(x_0)\lambda(x_0)\int_A k(x_0,\dd x_1)\dd x_0 =0$ and therefore $\int_{\R} p(x_0)\lambda(x_0)\dd x_0 k(x_0,\dd x_1)$ is absolutely continuous with respect to $\dd x_1$. On the other hand, for a bounded measurable function $f:\R\rightarrow \R$, we have
\begin{align*}\int_{\R} f(x_1)\int_{\R} p(x_0)  \delta_{\Phi(x_0,1/\mu)}(\dd x_1) \dd x_0&= \int_{\R} f(\Phi(x_0,1/\mu)) p(x_0) \dd x_0.
\end{align*}
For any $t>0$, the flow $x\mapsto \Phi(x,t)$ is bijective continuous and increasing, and its inverse function is $x\mapsto \Phi(x,-t)$. We get (see for example Proposition 4.10 p.~9 of~\cite{cf:RY})
$$\int_{\R} f(\Phi(x_0,1/\mu)) p(x_0) \dd x_0=\int_{\R} f(x') p(\Phi(x',-1/\mu))  \dd \Phi(x',-1/\mu).$$
Since $V$ is bounded and locally Lipschitz, $x\mapsto \Phi(x,-t)$ is locally Lipschitz, which gives that it is almost everywhere differentiable and $\dd \Phi(x',-1/\mu)=\partial_x\Phi(x',-t) \dd x'$. Last, the inequality $P^\mu_{t}(\dd x)\ge P^\mu_{0,t}(\dd x)$ gives the almost everywhere positivity of the density.

For $X_t$ (which corresponds to $\mu=\infty$), we have similarly $P_t(\dd x)=\sum_{n\ge 0}P_{n,t}(\dd x)$ with  $P_{0,t}(\dd x_0)=e^{- \bar{\lambda} t} p_0(\Phi(x_0,-t)) \partial_x\Phi(x_0,-t) \dd x_0$ and for $n\ge 1$, 
\begin{align*}
 P_{n,t}(\dd x_n)=&e^{-\bar{\lambda} t}\bar{\lambda}^n\int_{0\le t_1\le t_2\le \hdots\le t_n\le t}p_{t_1,\dots,t_n}(\Phi(x_n,t_n-t)) \partial_x\Phi(x_n,t_n-t) \dd t_1\hdots \dd t_n  \dd x_n ,\\
p_{t_1,\dots,t_n}(x_n) \dd x_n:=&\int_{\R^{n}}p_0(x_0) \dd x_0
 \prod_{j=0}^{n-1} \tilde{k}(t_{j+1}-t_{j},x_{j},\dd x_{j+1})
 \end{align*}
with $t_0=0$ and $\tilde{k}(t,x,\dd y)=\frac{\lambda(\Phi(x,t))}{\bar{\lambda}}k(\Phi(x,t),\dd y) +\left(1-\frac{\lambda(\Phi(x,t))}{\bar{\lambda}}\right)\delta_{\Phi(x,t)}(\dd y)$.
This representation can be obtained from~\eqref{repr_X} by observing that $\int_{\R^{n}}p_0(x_0) \dd x_0
 \prod_{j=0}^{n-1} \tilde{k}(t_{j+1}-t_{j},x_{j},\dd x_{j+1})$ is the law of $X_{T_n}$ knowing that $\{N_t=n,T_1=t_1,\dots,T_n=t_n\}$ where $T_i=\inf\{t\ge 0, N_t=i \}$. By induction on $n$, we check that this law is absolutely continuous with respect to $\dd x_n$, which makes the definition of $p_{t_1,\dots,t_n}$ valid. It is sufficient to check this for $n=1$, and we have
$$\int_{\R} p_0(x_0)\dd x_0 \lambda(\Phi(x_0,t))k(\Phi(x_0,t),\dd x_1)=\int_{\R}p_0(\Phi(x',-t)) \lambda(x') \partial_x\Phi(x',-t)k(x',\dd x_1) \dd x',$$ which is absolutely continuous with respect to $\dd x_1$ by Assumption~\eqref{ass:jumps}(iv). By using the flow, we get that $\int_{\R} p_0(x_0)\dd x_0\left(1-\frac{\lambda(\Phi(x_0,t))}{\bar{\lambda}}\right)\delta_{\Phi(x_0,t)}(\dd x_1) $ is also absolutely continuous with respect to $\dd x_1$. Therefore, the density $p_{t_1,\dots,t_n}(x_n)$ exists. It is then clear that  $P_{n,t}(\dd x_n)=\P(X_t\in \dd x_n,N_t=n)$ admits the above density. Last, $P_{t}(\dd x)\ge P_{0,t}(\dd x)$ gives the positivity of the density since $\partial_x\Phi(x,-t)$ is almost everywhere positive.

{\it Continuous increasing cumulative distribution functions (under Assumption~$\widehat{\text{(iv)}}$).} We follow the same steps. We use that
$P^{\mu}_t(\dd x)=\sum_{n\ge 0}P^\mu_{n,t}(\dd x)$ with
 $P^\mu_{0,t}(\dd x_0)=e^{-(\mu+\bar{\lambda}) t}P_0(\dd x_0)$ and for $n\ge 1$, 
$$P^\mu_{n,t}(\dd x_n)=e^{-(\mu+\bar{\lambda}) t}\frac{(\mu+\bar{\lambda})^n t^n}{n!}\int_{\R^{n}}P_0(\dd x_0) \prod_{j=0}^{n-1}\tilde{k}^\mu(x_{j},\dd x_{j+1}).$$
Let  $P$ be a probability measure on~$\R$ such that $P(\{x\})=0$ for any $x\in \R$. We have for any $y\in \R$
\begin{align*}
\int_{\R} P(\dd x) \tilde{k}^\mu(x, \{y\})=&\frac{1}{\mu + \bar{\lambda}}\int_{\R}\lambda(x) P(\dd x)  k(x,\{y \})\\
& +\frac{\mu}{\mu + \bar{\lambda}}  P(\{\Phi(y,-1/\mu)\} )  +  \frac{\bar{\lambda}-\lambda(x_1)}{\mu + \bar{\lambda}} P(\{y\} )=0,
\end{align*}
by Assumption~$\widehat{\textit{(iv)}}$. By induction, for any $n\in\N$, $P^\mu_{n,t}$ does not weigh points. Therefore, $F^{\mu}_t$ is continuous. It is also increasing since $x \mapsto P^\mu_{0,t}((-\infty,x])$ is increasing. 

We can do similar calculations for $\{X\}_{t\ge 0}$, and show that for any $n\in\N^*$, $0\le t_1\le \dots \le t_n\le t$, $P_{t_1,\dots,t_n}(\dd x_n)=\int_{\R^{n}}P_0( \dd x_0)
 \prod_{j=0}^{n-1} \tilde{k}(t_{j+1}-t_{j},x_{j},\dd x_{j+1})$ does not weigh points. 
Therefore,
$x_n\mapsto P_{n,t}((-\infty,x_n])=e^{-\bar{\lambda} t}\bar{\lambda}^n\int_{0\le t_1\le t_2\le \hdots\le t_n\le t}  P_{t_1,\dots,t_n}((-\infty,\Phi(x_n,t_n-t)]) \dd t_1\hdots \dd t_n  $ is continuous. Since $x\mapsto P_{0,t}((-\infty,x])=e^{-\bar{\lambda} t} P_0((-\infty,\Phi(x,-t)])$ is continuous increasing and  $P_t(\dd x)=\sum_{n\ge 0}P_{n,t}(\dd x)$, we get that $F_t$ is  continuous increasing.

{\it Tails.} From~\eqref{repr_Xmu}, we have $X^\mu_t\ge X_0- \|V\|_\infty N^\mu_t/\mu-M N_t$. Since $P_0\in \mathcal{P}_\rho$ and using the independence, we get 
\begin{align*}
F^\mu_t(x+y)& \le \E\left[ F_0\left(x+y+{\|V\|_\infty}N^\mu_t/\mu+M N_t\right) \right] \\
& \le c_0 F_0(x) e^{C_0 y } \exp\left(\mu t(e^{C_0\frac{\|V\|_\infty}{\mu}}-1)+ \bar{\lambda} t (e^{C_0M}-1) \right)
\end{align*}
Looking at the right tail, we also get
\begin{align*}
\bar{F}^\mu_t(x)&\ge \E\left[ \bar{F}_0(x+ \|V\|_\infty N^\mu_t/\mu +M N_t) \right]\ge \frac{1}{c_0} \bar{F}_0(x) \exp\left(\mu t(e^{-C_0\frac{\|V\|_\infty}{\mu}}-1)+ \bar{\lambda} t (e^{-C_0M}-1) \right).
\end{align*}
Similarly, the other inequality $X^\mu_t\le X_0+\|V\|_\infty N^\mu_t/\mu+M N_t$ obtained from~\eqref{repr_Xmu} leads to
\begin{align*}
F^\mu_t(x)&\ge  \E\left[ F_0(x-\|V\|_\infty N^\mu_t/\mu -M N_t) \right]
\ge \frac{1}{c_0} F_0(x) \exp\left(\mu t(e^{-C_0\frac{\|V\|_\infty}{\mu}}-1)+ \bar{\lambda} t (e^{-C_0M}-1) \right), \\
\bar{F}^\mu_t(x-y)&\le \E\left[ \bar{F}_0(x-y-\|V\|_\infty N^\mu_t/ \mu-M N_t) \right] \\
&\le  c_0 \bar{F}_0(x)e^{C_0 y} \exp\left(\mu t(e^{C_0\frac{\|V\|_\infty}{\mu}}-1)+ \bar{\lambda} t (e^{C_0M}-1) \right)  .
\end{align*}
Therefore, we obtain that both $\frac{F^\mu_t(x+y)}{F^\mu_t(x)}$ and $\frac{\bar{F}^\mu_t(x-y)}{\bar{F}^\mu_t(x)}$ are upper bounded by
$$ c_0^2 \exp\left(\mu t(e^{C_0\frac{\|V\|_\infty}{\mu}}-e^{-C_0\frac{\|V\|_\infty}{\mu}})+ \bar{\lambda} t (e^{C_0M}-e^{-C_0M}) \right) e^{C_0 y }. $$
From~\eqref{repr_X}, we check by similar calculations that the upper bound is also true for $\mu=+\infty$ with $\mu t(e^{C_0\frac{\|V\|_\infty}{\mu}}-e^{-C_0\frac{\|V\|_\infty}{\mu}})=2C_0\|V\|_\infty t$. Since $[1,+\infty] \ni \mu \mapsto \mu t(e^{C_0\frac{\|V\|_\infty}{\mu}}-e^{-C_0\frac{\|V\|_\infty}{\mu}})$ is nonincreasing, we get~\eqref{majo_tails}. 
\end{proof}

\begin{lemma}
Let $\rho>1$. We assume that Assumption~\ref{ass:Vlambda} holds and that $P,\tilde{P}\in \widehat{\mathcal{P}}_{\rho \alpha}$ for some $\alpha \ge 1$.   Let $\psi\in C^1$ be such that $\psi'(x)=\rho|x-T(x)|^{\rho-2}(T(x)-x)$  with $T(x)=\tilde{F}^{-1}(F(x))$ (i.e. this is a Kantorovich potential associated to the $\rho$-Wasserstein distance from~\eqref{potential_1d}). Let $c,C>0$ denote constants such that~\eqref{def_Erhohat} holds for $P$. 
Then, for any $\mu\in [1,+\infty]$, we have 
\begin{align}\label{majo_Lmu}
\int_{\R} |L^\mu \psi(x)|^{\alpha}P( \dd x) \, \le&\, 2^{\alpha-1}\|V\|_\infty^\alpha \rho^{\alpha}   2^{(\alpha(\rho-1)-1)^+} \Bigg( 2 c e^{C\|V\|_\infty} \int_{\R}|x|^{\alpha(\rho-1)} \tilde{P}(\dd x) \\ & \phantom{2^{\alpha-1} \rho^{\alpha}   } +2^{(\alpha(\rho-1)-1)^+} \left(\int_{\R}|x|^{\alpha(\rho-1)} P(\dd x) +\|V\|_\infty^{\alpha(\rho-1)} \right) \Bigg) \nonumber \\
&+2^{\alpha-1}\bar{\lambda}^\alpha 2^{\alpha \rho -1} \Bigg( (1+2^{\alpha \rho -1})\int_{\R} |x|^{\alpha \rho} P(\dd x)+ 2^{\alpha \rho -1} M^{\alpha \rho} \nonumber\\
&\phantom{+2^{\alpha-1}\bar{\lambda}^\alpha 2^{\alpha \rho -1}()} +  (1+ 2ce^{CM})\int_{\R} |x|^{\alpha \rho} \tilde{P}(\dd x) \Bigg).\nonumber
\end{align}
\end{lemma}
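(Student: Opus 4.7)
The plan is to split $L^\mu\psi$ into its drift part (either $\mu[\psi(\Phi(x,1/\mu))-\psi(x)]$ when $\mu<\infty$ or $V(x)\psi'(x)$ when $\mu=+\infty$) and its jump part $\lambda(x)\int(\psi(y)-\psi(x))k(x,\dd y)$, and to bound the $L^\alpha(P)$-norm of each piece separately after $|a+b|^\alpha\le 2^{\alpha-1}(|a|^\alpha+|b|^\alpha)$; this already isolates the outer factor $2^{\alpha-1}$ appearing in the statement. The two key analytical ingredients on both sides are the explicit formula $\psi'(z)=\rho(T(z)-z)|T(z)-z|^{\rho-2}$ from~\eqref{potential_1d} and the moment transfer estimate Lemma~\ref{lem_moments_transport}(ii), which is available precisely because $P,\tilde P\in\widehat{\mathcal P}_{\rho\alpha}$ and $T=\tilde F^{-1}\circ F$ is nondecreasing.

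For the drift contribution with $\mu<\infty$ I would write $\mu[\psi(\Phi(x,1/\mu))-\psi(x)]=\mu\int_0^{1/\mu}V(\Phi(x,s))\psi'(\Phi(x,s))\dd s$, bound $|V|\le\|V\|_\infty$, and apply Jensen's inequality in $s$ against the probability measure $\mu\,\dd s$ on $[0,1/\mu]$. The problem reduces to estimating $\int_\R|T(\Phi(x,s))-\Phi(x,s)|^{\alpha(\rho-1)}P(\dd x)$ for $s\le 1/\mu\le 1$. Splitting this difference via the factor $2^{(\alpha(\rho-1)-1)^+}$, the flow bound $|\Phi(x,s)-x|\le s\|V\|_\infty\le\|V\|_\infty$ gives $|\Phi(x,s)|^{\alpha(\rho-1)}\le 2^{(\alpha(\rho-1)-1)^+}(|x|^{\alpha(\rho-1)}+\|V\|_\infty^{\alpha(\rho-1)})$, while monotonicity of $T$ yields $|T(\Phi(x,s))|\le|T(x-s\|V\|_\infty)|+|T(x+s\|V\|_\infty)|$; Lemma~\ref{lem_moments_transport}(ii) applied with $y=s\|V\|_\infty\le\|V\|_\infty$ then produces the bound $2c\,e^{C\|V\|_\infty}\int_\R|x|^{\alpha(\rho-1)}\tilde P(\dd x)$. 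Since the resulting right-hand side is $s$-independent, averaging over $[0,1/\mu]$ is trivial and one recovers exactly the first bracket of~\eqref{majo_Lmu}. The case $\mu=+\infty$ gives the pointwise term $V(x)\psi'(x)$, which satisfies the same estimate since $\int_\R|T(x)-x|^{\alpha(\rho-1)}P(\dd x)$ is dominated by the same expression (the $e^{C\|V\|_\infty}$ and $\|V\|_\infty^{\alpha(\rho-1)}$ contributions are then unnecessary slack).

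For the jump part, bounding $\lambda\le\bar\lambda$ and applying Jensen inside $\int k(x,\dd y)$ (a probability measure) reduces the task to estimating $\int\!\int|\psi(y)-\psi(x)|^\alpha k(x,\dd y)P(\dd x)$. The decisive pointwise bound comes from Kantorovich duality: combining the inequality $\psi(y)+\tilde\psi(T(x))\ge -|y-T(x)|^\rho$ from~\eqref{dual} with the equality $\psi(x)+\tilde\psi(T(x))=-|T(x)-x|^\rho$ from~\eqref{optipi}, I get $\psi(x)-\psi(y)\le|T(x)-y|^\rho-|T(x)-x|^\rho\le|T(x)-y|^\rho$, and by the symmetric argument exchanging $x$ and $y$, $\psi(y)-\psi(x)\le|T(y)-x|^\rho$. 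Hence $|\psi(y)-\psi(x)|^\alpha\le|T(x)-y|^{\alpha\rho}+|T(y)-x|^{\alpha\rho}$, and the further split with $2^{\alpha\rho-1}$ yields four terms to integrate against $k(x,\dd y)P(\dd x)$: the term $\int|T(x)|^{\alpha\rho}P(\dd x)=\int|x|^{\alpha\rho}\tilde P(\dd x)$ by push-forward; the term $\int\!\int|y|^{\alpha\rho}k(x,\dd y)P(\dd x)\le 2^{\alpha\rho-1}(\int|x|^{\alpha\rho}P(\dd x)+M^{\alpha\rho})$ using the bounded-jump hypothesis $|y-x|\le M$; the term $\int|x|^{\alpha\rho}P(\dd x)$ directly; and $\int\!\int|T(y)|^{\alpha\rho}k(x,\dd y)P(\dd x)\le 2c\,e^{CM}\int|x|^{\alpha\rho}\tilde P(\dd x)$ once more by monotonicity of $T$ and Lemma~\ref{lem_moments_transport}(ii) with $y=M$. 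Collecting with the prefactor $2^{\alpha-1}\bar\lambda^\alpha\cdot 2^{\alpha\rho-1}$ gives exactly the second bracket of~\eqref{majo_Lmu}.

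The main obstacle is the jump step: one needs a pointwise upper bound on $|\psi(y)-\psi(x)|$ of polynomial order $\rho$ in $|x|,|y|,|T(x)|,|T(y)|$. The naive path through $|\psi(y)-\psi(x)|\le\rho\int_x^y|T(z)-z|^{\rho-1}\dd z$ introduces integrals over an interval whose size grows with $|y-x|$ and which cannot be absorbed into the $\rho\alpha$-moments of $P,\tilde P$. The two-sided Kantorovich duality bound circumvents this and delivers a clean polynomial estimate in a single step, keeping the exponent matched to the hypothesis $P,\tilde P\in\widehat{\mathcal P}_{\rho\alpha}$; all remaining steps are routine applications of Jensen's inequality, convexity, and the explicit tail control coming from membership of $P$ in $\widehat{\mathcal P}_{\rho\alpha}$.
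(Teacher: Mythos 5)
Your proof is correct and essentially reproduces the paper's argument: the same $2^{\alpha-1}$ splitting of $L^\mu\psi$ into drift and jump parts (with the drift handled uniformly in $\mu\in[1,+\infty]$ via $|\Phi(x,s)-x|\le\|V\|_\infty$), the same use of $|\psi'(z)|=\rho|T(z)-z|^{\rho-1}$, the monotonicity of $T$, the bounded-jump hypothesis and Lemma~\ref{lem_moments_transport}(ii), and the same key pointwise estimate $|\psi(y)-\psi(x)|\le\max\left(|T(x)-y|^\rho,|T(y)-x|^\rho\right)$, leading to exactly the constants in \eqref{majo_Lmu}. The only nuance is that the equality $\psi(x)+\tilde{\psi}(T(x))=-|T(x)-x|^\rho$ must hold at \emph{every} point (you apply it at $y$ in the support of $k(x,\cdot)$, which need not be $P$-a.e.), so you should invoke the pointwise identity \eqref{optim1d}, valid for the explicit potential \eqref{potential_1d} because $F$ is continuous, rather than the $\pi$-a.e. relation \eqref{optipi} you cite; the paper addresses the same point by upgrading the a.e. bound \eqref{ineg_sur_psi} to all $x,z$ using the continuity of $\psi$ and $T$ and the fact that $F$ is increasing.
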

\begin{proof}
 We have $\psi(\Phi(x,1/\mu))-\psi(x)=\int_0^{1/\mu} \psi'(\Phi(x,s))V(\Phi(x,s)) \dd s$ and thus
 $$|L^\mu \psi(x)|\le \mu \|V\|_\infty \int_0^{1/\mu} | \psi'(\Phi(x,s))| \dd s+\bar{\lambda} \int_{\R} |\psi(y)-\psi(x)|k(x,\dd y).$$
Also, we obtain $|L^{+\infty} \psi(x)|\le \|V\|_\infty |\psi'(x)|+\bar{\lambda} \int_{\R} |\psi(y)-\psi(x)|k(x,\dd y)$, which leads to
$$\forall \mu \in [1,+\infty], |L^\mu \psi(x)|\le \|V\|_\infty \max_{|y|\le \|V\|_\infty}|\psi'(x+y)|+\bar{\lambda} \int_{\R} |\psi(y)-\psi(x)|k(x,\dd y),$$
and then 
\begin{align*}
\int_{\R} |L^\mu \psi(x)|^{\alpha}P(\dd x)\le & 2^{\alpha-1} \|V\|_\infty^\alpha\int_{\R}   \max_{|y|\le \|V\|_\infty}|\psi'(x+y)|^\alpha P(\dd x)  \\&+  2^{\alpha-1}\bar{\lambda}^\alpha \int_{\R}\int_{\R} |\psi(z)-\psi(x)|^\alpha k(x,\dd z) P(\dd x) 
\end{align*}
For the first term, we use that $|\psi'(z)|^\alpha=\rho^\alpha |T(z)-z|^{\alpha(\rho-1)}\le \rho^\alpha 2^{(\alpha(\rho-1)-1)^+} (|T(z)|^{\alpha(\rho-1)}+|z|^{\alpha(\rho-1)}).$ Since $T$ is nondecreasing, we get
\begin{align*}
\max_{|y|\le \|V\|_\infty}|\psi'(x+y)|^\alpha \le & \rho^\alpha 2^{(\alpha(\rho-1)-1)^+} \Bigg( \max\left(\left|T\left(x+ \|V\|_\infty \right)\right|^{\alpha(\rho-1)},\left|T\left(x-\|V\|_\infty\right)\right|^{\alpha(\rho-1)}\right) \\
&\phantom{\rho^\alpha 2^{(\alpha(\rho-1)-1)^+}()} +2^{(\alpha(\rho-1)-1)^+} ( |x|^{\alpha(\rho-1)}+\|V\|_\infty^{\alpha(\rho-1)} )\Bigg).
\end{align*}
For the second term, since $\pi(\dd x,\dd y)=P(\dd x)\delta_{T(x)}(\dd y)$ achieves the optimal transport between $P$ and $\tilde{P}$, we get by using~\eqref{ineg_sur_psi} $P(\dd x)P(\dd z)$-a.s., $F$ increasing and the continuity of $\psi$ and $T$ that $|\psi(z)-\psi(x)|\le \max(|T(z)-x|^\rho , |z-T(x)|^\rho)$ for all $x,z\in \R$. We obtain  $|\psi(z)-\psi(x)|^\alpha\le 2^{\alpha \rho -1}(|x|^{\alpha \rho}+|T(x)|^{\alpha \rho}+|z|^{\alpha \rho}+|T(z)|^{\alpha \rho})$ and thus
\begin{align*}
\int_{\R}\int_{\R} &|\psi(z)-\psi(x)|^\alpha k(x,\dd z) P(\dd x) \le 2^{\alpha \rho -1} \Bigg( \int_{\R} |x|^{\alpha \rho} P(\dd x)+  \int_{\R} |x|^{\alpha \rho} \tilde{P}(\dd x) \\
&+ 2^{\alpha \rho -1} \left(M^{\alpha \rho} + \int_{\R} |x|^{\alpha \rho} P( \dd x ) \right)+ \int_{\R}(|T(x+M)|^{\alpha \rho}+ |T(x-M)|^{\alpha \rho})P(\dd x) \Bigg).
\end{align*}
Gathering all the terms and using Lemma~\ref{lem_moments_transport}, we get~\eqref{majo_Lmu}.
\end{proof}

\begin{lemma}\label{lem_convunif}Let $\rho>1$, $r>0$. We consider  $\psi_r(x)=\int_0^x \rho|\tilde{F}_r^{-1}(F_r(y))-y|^{\rho-2}(\tilde{F}_r^{-1}(F_r(y))-y)\dd y$ and $\psi_r^\mu(x)=\int_0^x \rho|(\tilde{F}_r^\mu)^{-1}(F_r^\mu(y))-y|^{\rho-2} [(\tilde{F}_r^\mu)^{-1}(F_r^\mu(y))-y]\dd y$.
Then, under the assumptions of Theorem~\ref{th: main PDMP 1D}, $L^\mu \psi^\mu_r$ converges locally uniformly to $L  \psi_r$. 
\end{lemma}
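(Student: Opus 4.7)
The plan is to reduce the statement to the locally uniform convergence of the transport maps $T_r^\mu(x)=(\tilde{F}_r^\mu)^{-1}(F_r^\mu(x))$ toward $T_r(x)$, and then to deal separately with the deterministic (flow) and jump parts of $L^\mu\psi_r^\mu$. First, from Proposition~\ref{prop: convskoro} applied to each of the two processes, $X_r^\mu\to X_r$ in law, so $F_r^\mu\to F_r$ and $\tilde{F}_r^\mu\to \tilde{F}_r$ pointwise. Corollary~\ref{cor_stability_Prho} and Lemma~\ref{lem_moments_transport}~(i) (applied at time $r$ to $P_r,\tilde{P}_r,P_r^\mu,\tilde{P}_r^\mu\in\mathcal{P}_{\rho(1+\varepsilon)}$ or $\widehat{\mathcal{P}}_{\rho(1+\varepsilon)}$) guarantee that $F_r,\tilde{F}_r,F_r^\mu,\tilde{F}_r^\mu$ are continuous and strictly increasing, hence $T_r,T_r^\mu$ and the potentials $\psi_r,\psi_r^\mu$ are of class $C^1$.

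By the classical Dini-type argument for monotone pointwise convergence to a continuous increasing function, $F_r^\mu\to F_r$ and $\tilde{F}_r^\mu\to\tilde{F}_r$ locally uniformly on $\R$, and $(\tilde{F}_r^\mu)^{-1}\to \tilde{F}_r^{-1}$ locally uniformly on $(0,1)$. For any compact $K\subset\R$, $F_r(K)$ is a compact subset of $(0,1)$, so $F_r^\mu(K)$ lies in a common compact subset of $(0,1)$ for large $\mu$, and composing gives $T_r^\mu\to T_r$ uniformly on $K$. Continuity of $r\mapsto \rho|r|^{\rho-2}r$ then yields $(\psi_r^\mu)'\to\psi_r'$ locally uniformly, and since $\psi_r^\mu(0)=\psi_r(0)=0$ we obtain $\psi_r^\mu\to\psi_r$ locally uniformly as well.

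To conclude, I split $L^\mu\psi_r^\mu=A^\mu+B^\mu$ with $A^\mu(x)=\mu[\psi_r^\mu(\Phi(x,1/\mu))-\psi_r^\mu(x)]$ and $B^\mu(x)=\lambda(x)\int[\psi_r^\mu(y)-\psi_r^\mu(x)]k(x,\dd y)$. Writing $A^\mu(x)=\mu\int_0^{1/\mu}(\psi_r^\mu)'(\Phi(x,s))V(\Phi(x,s))\,\dd s$, the compact set $K'=\{\Phi(x,s):x\in K,s\in[0,1]\}$ contains all evaluation points of $(\psi_r^\mu)'$ for $x\in K$ and $\mu\geq 1$; on $K'$ the derivatives converge uniformly, and $(x,s)\mapsto V(\Phi(x,s))\psi_r'(\Phi(x,s))$ is jointly continuous by Assumption~\ref{ass:Vlambda} and Step~2, so $A^\mu(x)\to V(x)\psi_r'(x)$ uniformly on $K$. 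For $B^\mu$, Assumption~\ref{ass:jumps}(iii) implies $k(x,\cdot)$ is supported in $[x-M,x+M]$, so for $x\in K$,
\[
|B^\mu(x)-B(x)|\leq\bar{\lambda}\Bigl(\int|\psi_r^\mu-\psi_r|(y)\,k(x,\dd y)+|\psi_r^\mu-\psi_r|(x)\Bigr)\leq 2\bar{\lambda}\!\!\sup_{y\in K+[-M,M]}\!\!|\psi_r^\mu-\psi_r|(y),
\]
which vanishes as $\mu\to\infty$. Adding the two contributions gives $L^\mu\psi_r^\mu\to L\psi_r$ locally uniformly.

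The main obstacle is the justification of the locally uniform convergence $T_r^\mu\to T_r$: this is where it is essential that $\tilde{F}_r$ be continuous \emph{and} strictly increasing so that $\tilde{F}_r^{-1}$ is continuous on $(0,1)$, and that this regularity propagates to the approximating marginals $\tilde{P}_r^\mu$ uniformly in $\mu$—both supplied by Corollary~\ref{cor_stability_Prho}. Once this is in hand, the remaining steps are essentially uniform versions of continuity arguments supported by the bounded-jump Assumption~\ref{ass:jumps}(iii) and the boundedness/continuity of $V$ and $\lambda$ from Assumption~\ref{ass:Vlambda}.
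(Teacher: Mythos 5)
Your proof is correct and follows essentially the same route as the paper's: pointwise convergence of the distribution functions and quantile functions from the weak convergence of Proposition~\ref{prop: convskoro}, a Dini-type monotonicity argument to upgrade the convergence of the transport maps $T_r^\mu$ to locally uniform, hence locally uniform convergence of $(\psi_r^\mu)'$ and $\psi_r^\mu$, and finally the same decomposition of $L^\mu\psi_r^\mu-L\psi_r$ into a flow part (handled via uniform continuity of $\psi_r'V\circ\Phi$ on compacts) and a jump part (handled via the bounded-jump Assumption~\ref{ass:jumps}(iii)). One cosmetic remark: Lemma~\ref{lem_moments_transport}(i) only gives continuity of $T_r$, $T_r^\mu$ (the potentials are $C^1$), but your argument uses nothing more than continuity and monotonicity of the transport maps, so this over-statement is harmless.
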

\begin{proof}
According to Proposition~\ref{prop_outil}, the functions $F_r$, $\tilde{F}_r$, $F^\mu_r$, $\tilde{F}^\mu_r$ are continuous and increasing. The convergence in law stated by Proposition~\ref{prop: convskoro} gives the pointwise convergence of $F^\mu_r$ (resp. $(\tilde{F}^\mu_r)^{-1}$) to $F_r$ (resp. $\tilde{F}_r^{-1}$). Since $x\mapsto (\tilde{F}^\mu_r)^{-1}(F_r^\mu(x))$ is increasing, a classical result sometimes named as the second Dini theorem gives the local uniform convergence of $(\tilde{F}^\mu_r)^{-1}(F_r^\mu(x))$ to $\tilde{F}_r^{-1}(F_r^\mu(x))$. Then, using that $z\mapsto \rho|z|^{\rho-2} z$ is $(\rho-1)$-H\"olderian for $1<\rho\le 2$ and locally Lipschitz for $\rho>2$, we get local uniform convergence of $(\psi^\mu_r)'(x)$ and $\psi^\mu_r(x)-\psi^\mu_r(0)$ toward $\psi'_r(x)$ and $\psi_r(x)-\psi_r(0)$.
We write
\begin{align}
L^\mu \psi^\mu_r(x)-L \psi_r(x)=&\mu\int_0^{\frac{1}{\mu}} [(\psi^\mu_r)'(\Phi(x,s)) -\psi_r'(\Phi(x,s)) ]V(\Phi(x,s))\dd s  \nonumber \\
&+\mu\int_0^{\frac{1}{\mu}}\psi_r'(\Phi(x,s))V(\Phi(x,s))- \psi_r'(x)V(x)\dd s \label{diff_LmuL}\\
&+\lambda(x)\int_{\R}(\psi^\mu_r(y)-\psi_r(y)-\psi^\mu_r(x)+\psi_r(x) )  k(x,\dd y) \nonumber
\end{align}
 Let $a>0$. For $x\in[-a,a]$ and $\mu\ge 1$ and $s\in[0,1/\mu]$, $|\Phi(x,s)|\le a+\|V\|_\infty$. The uniform convergence of  $(\psi^\mu_r)'$ on $[-( a+\|V\|_\infty ), a+\|V\|_\infty]$ and Assumption~\ref{ass:Vlambda}(i) gives the uniform in $x\in [-a,a]$ convergence  to zero  of the first  term of the right-hand-side of~\eqref{diff_LmuL}. Similarly, the third term uniformly converges to zero on  $[-a,a]$ by using the local uniform convergence of $\psi^\mu_r(x)-\psi^\mu_r(0)$, Assumptions~\ref{ass:Vlambda}~(ii) and~\ref{ass:jumps}~(iii). Last, the second term also converges uniformly  to zero on $[-a,a]$ by using that $\Phi$ is continuous on $[-a,a]\times [0,1]$, the Heine-Cantor theorem and the continuity of $\psi'_rV$.
\end{proof}

 \begin{proof}[Proof of Theorem~\ref{th: main PDMP 1D}]
We use the approximating family of pure jump processes $\{X_t^\mu\}_{\mu\geq 1}$ and $\{\oX_t^\mu\}_{\mu\geq 1}$. By using the uniform bound~\eqref{bound_moments} on moments of order $\rho(1+\varepsilon)$ and the convergence in law given by Proposition~\ref{prop: convskoro}, we have by Theorem~6.9 of~\cite{cf:Villani}
$$W_\rho^{\rho}(P_t,\oP_t)-W_\rho^{\rho}(P_0,\oP_0) =\lim_{\mu\to\infty} W_\rho^{\rho}(P^\mu_t,\oP^\mu_t)-W_\rho^{\rho}(P_0,\oP_0).$$
Theorem~\ref{teo:principale} yields 
$$W_\rho^{\rho}(P^\mu_t,\oP^\mu_t)-W_\rho^{\rho}(P_0,\oP_0)=-\int_0^t\left(\int_\R L^\mu\,\psi^\mu_r(x)\, P^\mu_r(\dd x) +\int_\R \oL^\mu\,\opsi^\mu_r(x)\, \oP^\mu_r(\dd x) \right)\dd r. $$
To obtain~\eqref{eq: deri wass PDMPPJ}, it is sufficient show that $\int_\R L^\mu\,\psi^\mu_r(x)\, P^\mu_r(\dd x)$ converges for all $r\in[0,t]$ to $\int_\R L \,\psi_r(x)\, P_r(\dd x)$. Indeed, we can then apply the dominated convergence theorem by using~\eqref{majo_Lmu} and~\eqref{bound_moments}. For $N>0$, we define $[z]_N=\max(-N,\min(z,N))$ for any $z\in \R$, and write

\begin{align*}
  \left|   \int_\R L^\mu\psi^\mu_r(x)\, P_r^\mu(\dd x) -\int_\R L\psi_r \,P_r(\dd x)\right| &\leq 
\left|   \int_\R\left( L^\mu\psi^\mu_r(x)- [L^\mu\psi^\mu_r(x)]_N \right)\, P_r^\mu(\dd x)\right|\\
&+\left|   \int_\R\left(  [L^\mu\psi^\mu_r(x)]_N - [L \psi_r(x)]_N\right)\, P_r^\mu(\dd x)\right| \\
&+\left|\int_\R [L \psi_r(x)]_N (P_r^\mu(\dd x)-P_r(\dd x))\right|\\
&+\left|\int_\R \left([L \psi_r(x)]_N  -L\psi_r(x)\right)P_r(\dd x)\right|
\end{align*}
Let $\eta>0$. Using  $\int_\R |L^\mu\psi^\mu_r(x)|\ind_{\{|L^\mu\psi^\mu_r(x)|>N\}}P_r^\mu(\dd x)\le N^{-\varepsilon} \int_\R |L^\mu\psi^\mu_r(x)|^{(1+\varepsilon)}P_r^\mu(\dd x)$ and \eqref{majo_Lmu} with $\alpha=1+\varepsilon$, we can find $N$ large enough so that the first and the fourth terms are bounded by $\eta/4$, uniformly in $\mu\ge 1$. For the second term, we use that that the family $P_t^\mu$ is tight since it converges weakly to $P_t$. Let $\mathcal{K}$ be a compact subset of $\R$ such that $\int_{\R\setminus \mathcal{K}}P_r^\mu(\dd x) \le \frac{\eta}{16N}$ for any $\mu\ge 1$.  From Lemma~\ref{lem_convunif}, there exists $\mu_\eta$ such that $\sup_{x\in \mathcal{K}}|L^\mu \psi^\mu(x)-L \psi(x)| \le \eta/8 $ for $\mu\ge \mu_\eta$. We get
$$\left|   \int_\R\left(  [L^\mu\psi^\mu_r(x)]_N - [L \psi_r(x)]_N\right)\, P_r^\mu(\dd x)\right|\le 2N \frac{\eta}{16N} +\frac{\eta}{8}=\frac{\eta}{4}.$$
Last, we note that $x\mapsto [L \psi_r(x)]_N$ is bounded and continuous by Assumption~\ref{ass:Vlambda}, Assumption~\ref{ass:jumps}(v) and the continuity of $\psi_r'$. The weak convergence of $P_t^\mu$ gives the existence of $\mu'_\eta\ge\mu_\eta $ such that $\left|\int_\R [L \psi_r(x)]_N (P_r^\mu(\dd x)-P_r(\dd x))\right|\le \eta/4$ for $\mu \ge \mu'_\eta$. We get $\left|   \int_\R L^\mu\psi^\mu_r(x)\, P_r^\mu(\dd x) -\int_\R L\psi_r \,P_r(\dd x)\right|\le \eta$ for $\mu \ge \mu'_\eta$, which yields the claim.
 \end{proof}

 \appendix

 \section{Weak convergence of $X^\mu_t$ to $X_t$}\label{app: skoro}
\begin{proposition}\label{prop: convskoro}
Let Assumptions~\ref{ass:Vlambda} and~\ref{ass:jumps}(iii) and (v) hold.   Then, for any $t\ge 0$, $P_t^{\mu}$ converges weakly to $P_t$ when $\mu \rightarrow +\infty$.
\end{proposition}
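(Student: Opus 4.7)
The plan is the standard martingale-problem approach: establish tightness of $\{X^\mu\}_{\mu\ge 1}$ in $D([0,T],\R)$, identify any subsequential weak limit as a solution of the martingale problem for $L$, and conclude by uniqueness of this martingale problem. This will yield weak convergence of the whole sequence in $D([0,T],\R)$, which implies the weak convergence of marginals stated in the proposition.

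For \textbf{tightness}, the pathwise representation of Remark~\ref{rk:representation} combined with the estimate $|\Phi(x,1/\mu)-x|\le \|V\|_\infty/\mu$ gives, for $0\le s\le t\le T$,
$$|X^\mu_t-X^\mu_s|\le \frac{\|V\|_\infty}{\mu}(N^\mu_t-N^\mu_s)+M\,(N_t-N_s),$$
so that $\E[|X^\mu_{\tau+\delta}-X^\mu_\tau|]\le (\|V\|_\infty+M\bar\lambda)\delta$ uniformly in $\mu\ge 1$ and in stopping times $\tau\le T$. This is Aldous's criterion; the compact containment condition follows from the uniform moment bound~\eqref{bound_moments}. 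Hence the family $\{X^\mu\}_{\mu\ge 1}$ is tight in $D([0,T],\R)$.

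For the \textbf{identification of the limit}, fix a subsequence with $X^{\mu_k}\Rightarrow Y$ and a test function $f\in C_c^\infty(\R)$. The processes
$$M^{\mu,f}_t:=f(X^\mu_t)-f(X^\mu_0)-\int_0^t L^\mu f(X^\mu_r)\,\dd r$$
are bounded martingales, uniformly in $\mu\ge 1$, since $\|L^\mu f\|_\infty\le \|V\|_\infty\|f'\|_\infty+2\bar\lambda\|f\|_\infty$. Writing
$$L^\mu f(x)=\mu\int_0^{1/\mu}f'(\Phi(x,s))\,V(\Phi(x,s))\,\dd s+\lambda(x)\int_{\R}(f(y)-f(x))\,k(x,\dd y),$$
the continuity of $V$ and $f'$ and Assumption~\ref{ass:Vlambda}(i) yield $L^\mu f\to Lf$ locally uniformly in $x$, while $Lf$ is bounded and continuous by Assumptions~\ref{ass:Vlambda} and \ref{ass:jumps}(iii),(v). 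Combining uniform-on-compacts convergence of $L^\mu f$ with the uniform moment bound~\eqref{bound_moments} (to truncate $X^{\mu_k}$ to compacts with arbitrarily small error), Skorokhod convergence of the paths, and continuity of $Lf$, one passes to the limit in the identity
$$\E\bigl[M^{\mu_k,f}_t\,\Psi(X^{\mu_k}_{r_1},\dots,X^{\mu_k}_{r_n})\bigr]=\E\bigl[M^{\mu_k,f}_s\,\Psi(X^{\mu_k}_{r_1},\dots,X^{\mu_k}_{r_n})\bigr]$$
for every bounded continuous $\Psi$ and every $r_1<\dots<r_n\le s<t$ taken outside the at most countable set of fixed discontinuities of $Y$. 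This shows that $M^f_t:=f(Y_t)-f(Y_0)-\int_0^t Lf(Y_r)\,\dd r$ is a martingale, so $Y$ solves the $L$-martingale problem with initial law $P_0$.

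For \textbf{uniqueness}, the pathwise representation~\eqref{repr_X} constructs a solution to the $L$-martingale problem from $X_0$ and the driving Poisson process $N$ together with the marking variables: the flow $\Phi$ is well defined thanks to Assumption~\ref{ass:Vlambda}(i), and $\bar\lambda<\infty$ prevents explosion. This pathwise construction yields uniqueness in law, so every weak subsequential limit coincides with the law of $X$, and $P^\mu_t\to P_t$ weakly for every $t\ge 0$. The main technical obstacle is the simultaneous passage to the limit in $\int_0^t L^{\mu_k}f(X^{\mu_k}_r)\,\dd r$, which requires carefully combining local uniform convergence of $L^{\mu_k}f$ to $Lf$ with Skorokhod convergence of paths and uniform moment bounds to reduce to a bounded continuous functional of the trajectories.
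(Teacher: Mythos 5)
Your overall strategy (Aldous tightness via the pathwise representation, identification of subsequential limits through the martingale problem for $L$ using the uniform convergence of $L^\mu f$ to $Lf$, then uniqueness) is the same as the paper's. However, your uniqueness step has a genuine gap. You claim that because the representation~\eqref{repr_X} constructs a solution pathwise from $X_0$, the Poisson process $N$ and the marks, ``this pathwise construction yields uniqueness in law'' for the $L$-martingale problem. That implication does not hold as stated: exhibiting one (even strong, pathwise unique) solution of the Poisson-driven equation does not show that \emph{every} solution of the martingale problem has the same law, because an arbitrary martingale-problem solution is not a priori representable as a solution of that equation driven by some Poisson random measure. To close this one needs either a representation theorem for solutions of integro-differential martingale problems (so that a Yamada--Watanabe-type argument applies) or a direct uniqueness result; the paper does the latter by invoking Theorem~II$_{13}$ of~\cite{cf:LM}. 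Without such an ingredient, the conclusion that all subsequential limits coincide with the law of $X$ is not established.

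A second, smaller gap is the passage from weak convergence in $D([0,\infty),\R)$ to weak convergence of the time-marginals at \emph{every} fixed $t\ge 0$: the projection $\omega\mapsto\omega(t)$ is not continuous on the Skorokhod space, so you must check that $t$ is almost surely a continuity point of the limit, i.e.\ $\P(X_t=X_{t-})=1$, which the paper verifies directly from~\eqref{repr_X} (the jump times of the thinned Poisson process have no atoms). Relatedly, in the tightness step the fixed-time bound~\eqref{bound_moments} does not by itself give the compact containment (running supremum) condition of Theorem~4.5, p.~320 of~\cite{cf:JS}; you should instead use the pathwise bound $\sup_{t\le \capT}|X^\mu_t-X_0|\le \|V\|_\infty N^\mu_\capT/\mu+MN_\capT$, which you essentially already wrote down. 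These two points are easy to repair, but the uniqueness step needs an actual argument or citation.
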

\begin{proof}
We start by proving that
$ \{X_t^{\mu}\}_{t \ge 0}$ converges in law to $\{X_t\}_{t \ge 0}$
 as $\mu \rightarrow +\infty$.  We know that for every $0<s<t $, $n \in \N^*$, $s_1,\cdots, s_n\in (0,s)$, 
and all $C^\infty$ functions $g:\R^n\rightarrow \R$ and $f:\R^n\rightarrow \R$ with compact support, we have
\begin{equation}\label{eq: problema martingale}
 \E\left[ \left( f(X_t^\mu)-f(X_s^\mu)-\int_s^t L^\mu f(X_r^\mu)\, \dd r\right) 
 g(X_{s_1}^\mu,\cdots ,X_{s_n}^\mu)\right]=0\,,
\end{equation}
where $L^\mu$ is defined in \eqref{eq: Lmu}.

Let us first write
\begin{align*}  
L^\mu f(x)-Lf   (x) &= \mu \int_0^\frac 1\mu \left[V\left(\Phi\left(x,s\right)\right) 
f'\left(\Phi\left(x,s\right)\right) -V(x)f'(x) \right]\dd s.
\end{align*}
Since $f$ has a compact support, $Vf'$ is bounded and uniformly continuous on this compact by Assumption~\ref{ass:Vlambda}(i). This gives that $L^\mu f$ converges uniformly to $Lf$.

Now, we want to check the tightness criterion of Aldous (see e.g. Theorem~4.5, p.~320 of~\cite{cf:JS}). To do so, we use the representation~\eqref{repr_Xmu}. Let $\capT>0$.  We obtain for $t\in [0,\capT ]$, $|X^\mu_t|\le |X_0|+  \frac{ \|V\|_\infty}{\mu}N^\mu_\capT  +M N_\capT $ and thus the tightness of $(\sup_{t\in [0,\capT ]}|X^\mu_t|)_{\mu\ge 1}$ since $N^\mu_\capT /\mu$ converges in probability to $\capT $ as $\mu \rightarrow + \infty$. Besides, if $S$ and $S'$ denote two stopping times such that $S\le S'\le \min (S+\delta,\capT )$, we have 
$|X^\mu_{S'}-X^\mu_{S}|\le  \|V\|_\infty  ( N^\mu_{S+\delta}- N^\mu_{S})/\mu+M( N_{S+\delta}- N_{S})$. We get by the strong Markov property of Poisson processes
$$\P(|X^\mu_{S'}-X^\mu_{S}|>\eta)\le \P\left(   \|V\|_\infty   N^\mu_{\delta} /\mu >\eta/ 2\right)+ \P\left(M N_{\delta} >\eta/2 \right),$$
for any $\eta>0$.  Let $\epsilon>0$. We choose 
$\delta>0$ small enough such that $\|V\|_\infty \delta<\eta/2$ and $(1-e^{-\bar{\lambda} \delta})<\epsilon/2$.
Since $N^\mu_{\delta}/\mu$ converges in probability to $\delta$, 
 there is $\bar{\mu}$ such that for $\mu\ge \bar{\mu}$,  $ \P\left(   \|V\|_\infty   N^\mu_{\delta} /\mu >\eta/ 2\right) <\epsilon/2$ and thus $\P(|X^\mu_{S'}-X^\mu_{S}|>\eta) < \epsilon.$

 We note $\mathbb{D} $ the space of càdlàg real functions on $[0,+\infty)$ endowed with the Skorokhod topology  and $P^\mu(\dd \omega)$ the probability law of $\{X_t^\mu\}_{t \ge 0}$. We rewrite~\eqref{eq: problema martingale} as
$$\int_{\mathbb{D} } \left( f(\omega(t))-f(\omega(s))-\int_s^t L^\mu f(\omega(r))\, \dd r\right) 
 g( \omega(s_1),\cdots ,\omega(s_n)) P^\mu(\dd \omega)=0.$$
The family $(P^\mu(\dd \omega))_{\mu\ge 1}$ is tight, and we denote by $P^\infty(\dd \omega)$ a limit point of this family. For $r>0$, $\omega\in \mathbb{D}  \mapsto \omega(r)$ is not continuous, but $\mathcal{T}_{P^\infty}=  \{r >0 , P^\infty(\omega(r) \not = \omega(r-))>0 \} $  is a countable set such that for $r \in [0,+\infty) \setminus\mathcal{T}_{P^\infty}$,  $\omega\mapsto \omega(r)$ is $P^\infty(\dd \omega)$-a.s. continuous. Since $f$, $Lf$ and $g$ are bounded continuous by using Assumption~\ref{ass:Vlambda} and~\ref{ass:jumps}(v), we get by using the uniform convergence of $L^\mu f$ to $Lf$ and the weak convergence of a subsequence $P^{\mu_k}$ towards $P^\infty$ that
\begin{equation}\label{eq:mgpb_limite}
\int_{\mathbb{D} } \left( f(\omega(t))-f(\omega(s))-\int_s^t L f(\omega(r))\, \dd r\right) 
 g( \omega(s_1),\cdots ,\omega(s_n)) P^\infty(\dd \omega)=0,
\end{equation}
when $s_1,\dots,s_n,s,t$ are not in $\mathcal{T}_{P^\infty}$. By using the right-continuity of $\omega$, the dominated convergence theorem gives that \eqref{eq:mgpb_limite} still holds for any $0<s<t $, $n \in \N^*$, $s_1,\cdots, s_n\in (0,s)$. Thus, any limit point of $(P^\mu(\dd \omega))_{\mu\ge 1}$ is a  solution of the martingale problem associated to~ $\{X_t\}_{t\ge 0}$. By using Theorem~II$_{13}$ of~\cite{cf:LM}, we have the uniqueness  for the martingale problem, which gives the weak convergence of $\{X_t^{\mu}\}_{t \ge 0}$ to $\{X_t\}_{t\ge 0}$. 

Now, from the representation~\eqref{repr_X}, we easily get 
$\P(X_t=X_{t-})=1$ for any $t>0$. Thus, for all $t\ge 0$, $\omega\in\mathbb{D}  \mapsto \omega(t)$ is $P^\infty$-a.s. continuous, and for any bounded continuous function $h:\R\rightarrow \R$, we have
$\int_{\mathbb{D} }h(\omega(t))  P^\mu(\dd \omega) \underset{\mu \rightarrow + \infty}{\rightarrow} \int_{\mathbb{D} }h(\omega(t))  P^\infty(\dd \omega) $. 
\end{proof}

\section*{Acknowledgements}
This research benefited
     from the support of the ``Chaire Risques Financiers'', Fondation du
     Risque, the French National Research Agency under the program
  ANR-12-BS01-0019 (STAB)



\begin{thebibliography}{}
 \bibitem[\protect\citeauthoryear{Alfonsi et al.}{2014}]{cf:AJK14}
 Alfonsi A., Jourdain B., and Kohatsu-Higa A., (2014)
 \emph{Pathwise optimal transport bounds between a one-dimensional diffusion and its Euler scheme.} 
 The Annals of Applied Probability 24(3): 1049-1080.
 \bibitem[\protect\citeauthoryear{Alfonsi et al.}{2015}]{cf:AJK}
 Alfonsi A., Jourdain B., and Kohatsu-Higa A., (2015)
 \emph{Optimal transport bounds between the time-marginals of a
              multidimensional diffusion and its {E}uler scheme.} 
Electron. J. Probab. 20 no. 70.
 \bibitem[\protect\citeauthoryear{Ambrosio et al..}{2008}]{cf:AGS}
 Ambrosio L., Gigli N., and Savar\'{e} G., (2008)
 \emph{Gradient flows: in metric spaces and in the space of probability measures.} 
 Springer Science \& Business Media.
\bibitem[\protect\citeauthoryear{Bardet et al.}{2013}]{cf:BCGMZ13}
 Bardet J.-B., Christen A., Guillin
              A., Malrieu F., and Zitt P.-A., (2013)\emph{Total variation estimates for the {TCP} process.} Electron. J. Probab. 18 no. 10.


\bibitem[\protect\citeauthoryear{Bolley et al.}{2012}]{cf:BGG12}
 Bolley F., Gentil Y., and Guillin A., (2012)
 \emph{Convergence to equilibrium in Wasserstein distance for
   Fokker-Planck equations.} J. Funct. Anal. 263(8): 2430-2457.

\bibitem[\protect\citeauthoryear{Bolley et al.}{2013}]{cf:BGG13}
 Bolley F., Gentil Y., and Guillin A., (2013)
 \emph{Uniform convergence to equilibrium for granular media.} Arch. Ration. Mech. Anal. 208(2): 429-445.



\bibitem[\protect\citeauthoryear{Chafa\"i and Joulin}{2013}]{cf:CJ13} 
Chafa\"i D., and Joulin A., (2007)
\emph{Intertwining and commutation relations for birth-death
              processes.} 
Bernoulli 19(5A):1855-1879.
\bibitem[\protect\citeauthoryear{Eberle}{2016}]{cf:E16} 
Eberle, A., (2016)
\emph{Reflection couplings and contraction rates for diffusions.} 
Probab. Theory Related Fields 166(3-4):851-886.

 \bibitem[\protect\citeauthoryear{Ethier and Kurtz}{2009}]{cf:EK}
 Ethier S. N., and Kurtz T. G., (2009)
 \emph{Markov processes: characterization and convergence.} 
 John Wiley \& Sons, Vol. 282.
 
\bibitem[\protect\citeauthoryear{Ikeda and Watanabe}{1989}]{cf:IW}
Ikeda N., and Watanabe S., (1989)
\emph{Stochastic differential equations and diffusion processes.}
Second edition.
North-Holland Mathematical Library, 24.

\bibitem[\protect\citeauthoryear{Jacod and Shiryaev}{1987}]{cf:JS} 
Jacod J., and Shiryaev A. N., (1987)
\emph{Limit theorems for stochastic processes.} 
Springer-Verlag, Fundamental Principles of Mathematical Sciences, Vol. 288.

\bibitem[\protect\citeauthoryear{Joulin}{2007}]{cf:J07} 
Joulin A., (2007)
\emph{Poisson-type deviation inequalities for curved continuous-time
              {M}arkov chains.} 
Bernoulli 13(3):782-798.

\bibitem[\protect\citeauthoryear{Joulin}{2009}]{cf:J09} 
Joulin A., (2009)
\emph{A new {P}oisson-type deviation inequality for {M}arkov jump
              processes with positive {W}asserstein curvature.} 
Bernoulli 15(2):532-549.



 \bibitem[\protect\citeauthoryear{Lepeltier and Marchal}{1976}]{cf:LM}
 Lepeltier J.-P., and Marchal B., (1976)
 \emph{Probl\`eme des martingales et \'equations diff\'erentielles stochastiques associ\'ees \`a un op\'erateur int\'egro-diff\'erentiel.} Ann. Inst. H. Poincar\'e Sect. B (N.S.) 12(1): 43-103.
 
\bibitem[\protect\citeauthoryear{Luo and Wang J.}{2016a}]{cf:LW16a}
 Luo D., and Wang J., (2016a)
 \emph{Exponential convergence in $L^p$-Wasserstein distance for diffusion
processes without uniformly dissipative drift.} Math. Nach. 289(14-15):1090-1926.
 \bibitem[\protect\citeauthoryear{Luo and Wang J.}{2016b}]{cf:LW16b}
 Luo D., and Wang J., (2016b)
 \emph{Refined Basic Couplings and Wasserstein-type Distances for SDEs with L\'evy noises.} Preprint arXiv:1604.07206.
  \bibitem[\protect\citeauthoryear{Rachev and R\"{u}schendorf}{1998}]{cf:RR}
  Rachev S. T., and R\"{u}schendorf L., (1998)
  \emph{Mass Transportation Problems: Volume I: Theory.}
 Springer Science \& Business Media.
 

 \bibitem[\protect\citeauthoryear{von Renesse and Sturm}{2005}]{cf:SvR05}
 von Renesse M.-K., and Sturm K.-T., (2005)
 \emph{Transport inequalities, gradient estimates, entropy, and
              {R}icci curvature.} 
 Comm. Pure Appl. Math. 58(7):923-940.

 \bibitem[\protect\citeauthoryear{Revuz and Yor}{1999}]{cf:RY}
 Revuz D., and Yor M., (1999)
 \emph{Continuous martingales and Brownian motion.} 
 Springer Science \& Business Media, Vol. 293.

 \bibitem[\protect\citeauthoryear{Villani}{2008}]{cf:Villani}
 Villani C., (2008) 
 \emph{Optimal transport: old and new.}  
 Springer Science \& Business Media, Vol. 338.
 \bibitem[\protect\citeauthoryear{Wang J.}{2016}]{cf:WJ16}
Wang J., (2016) 
 \emph{{$L^p$}-{W}asserstein distance for stochastic differential
              equations driven by {L}\'evy processes.}  
 Bernoulli 22(3):1598-1616.
 \bibitem[\protect\citeauthoryear{Wang F.-Y.}{2016}]{cf:WF16}
Wang F.-Y., (2016) 
 \emph{Exponential Contraction in Wasserstein Distance for Diffusion Semigroups with Negative Curvature.}  
 Preprint arXiv 1603.05749. \end{thebibliography}
 \end{document}